\newtheorem{theorem}{Theorem}[section]
\newtheorem{proposition}[theorem]{Proposition}
\newtheorem{remark}[theorem]{Remark}
\newtheorem{lemma}[theorem]{Lemma}
\newtheorem{definition}[theorem]{Definition}
\numberwithin{equation}{section}
\newtheorem{teo}{Theorem}[section]
\newtheorem{Obs}{Remark}[section]
\newtheorem{problem}{Problem}
\newcommand{\be}{\begin{equation}}
\newcommand{\ee}{\end{equation}}
\newcommand{\ba}{\begin{eqnarray}}
\newcommand{\ea}{\end{eqnarray}}
\newcommand{\beq}{\begin{equation}}
\newcommand{\eeq}{\end{equation}}
\numberwithin{equation}{section}
\keywords{}
\subjclass[2010]{}
\begin{document}
\title[Robust Stackelberg controllability for the KS equation]{Robust Stackelberg controllability for the Kuramoto--Sivashinsky Equation}

\author{Cristhian  Montoya}
\address{C. Montoya,  Instituto de Ingenier\'{i}a Matem\'atica y Computacional, Pontificia Universidad Cat\'olica de Chile, Chile}
\email{cdmontoya85@gmail.com -- cdmontoy@mat.uc.cl}

\author{Louis Breton}
\address{Facultad de Ciencias,Universidad Nacional Aut\'onoma de M\'exico, M\'exico}
 \email{louis.breton@ciencias.unam.mx}

\thanks{The first author is supported by the Fondecyt Postdoctoral Grant N 3180100.}

\begin{abstract}
In this article the robust Stackelberg controllability (RSC) problem is studied for 
	 a nonlinear fourth--order parabolic equation, namely, the Kuramoto--Sivashinsky equation.
	When three external sources are acting into the system, the RSC problem consists essentially in combining two subproblems: 
	the first one is
	a saddle point problem among two sources. Such an sources are called the ``follower control'' and  its 
	associated ``disturbance signal''. This procedure corresponds to a robust control problem.
	The second one is a hierarchic control problem (Stackelberg strategy), which involves the third force, 
	so--called leader control.  
	The RSC problem establishes a simultaneous game for these forces in the sense that, the leader control has 
	as objective to verify a controllability property, while the follower control and perturbation solve a robust 
	control problem. In this paper the leader control obeys to the exact controllability to the trajectories.
	Additionally, iterative algorithms to approximate  the robust control problem 
	as well as the robust Stackelberg strategy for the nonlinear Kuramoto--Sivashinsky equation are developed
	and implemented.
	
\end{abstract}

\maketitle
% -----------------------------------------
% -------INTRODUCTION----------------------------------------
%-----------------------------------------
%%%%%%%%%%%%%%%%%%%%%%%%%%%%%%%%%%%%%%%%%%%%%%%%%%%
%%%%%%%%% INTRODUCTION %%%%%%%%%%%%%%%
%%%%%%%%%%%%%%%%%%%%%%%%%%%%%%%%%%%%%%%%%%%%%%%%%%%
\section{Main problems. Robust Stackelberg controllability} 
	The Stackelberg strategy is a concept from game theory which appears with the publication by 
	Heinrich Von Stackelberg in 1934 `` Market structure and equilibrium''.  It is a non--cooperative
	competition game with applications to economic processes that involves two--player with a hierarchic structure, 
	namely, the first player (called the leader) enforce its strategy on the other player, and then the second
	player (called the follower) reacts trying to win or optimize the answer to the leader 
	movement, see \cite{stackelberg1952theory,von2010market}. The previous sentences correspond to a general 
	notion on a Stackelberg strategy, which is applied in the context of hierarchic control for some models
	described by partial differential equations (PDEs).  
	
	On the other hand, the robustness in a control system is the sensitivity to the effects that are not considered in the 
	analysis and design such as disturbance signals and noise measurements. In other words, a system 
	is said to be robust when it is hardy, durable and resilient,  and also stable over the range of 
	parameter variations.  In this sense, one could think in the worst--case disturbance of the system, 
	and design a controller which is suited to handle even this extreme situation. Thus, the problem 
	of finding a robust control involves the problem of finding the worst-case disturbance in the spirit 
	of a non--cooperative game (when there is no cooperation between the controller and disturbance function), 
	that means from a mathematical point of view to reach a saddle point for the pair disturbance--controller.
	In the literature there are many works concerning robust control problems, see for instance the books 
	\cite{book2012Greenlinear,bookdullerud2013course,bookdragan2006mathematical,bookchristofides2002nonlinear} and its 
	references therein for a complete description on this subject.
		
	From a theoretical perspective, recent works have mixed the concept of robust control with a  Stackelberg strategy, and 
	applied it to semilinear and linear heat equations \cite{2018-robust-heat,2020victorlilinana}, and to 
	the Navier--Stokes system 
	\cite{2018-robust-ns}. This new idea in control theory is being abridged and called ``Robust Stackelberg controllability''
	(RSC), see Problem \ref{p3.robustandstackelberg} below. In the case of a semilinear heat equation 
	 \cite{2018-robust-heat}, the RSC problem used external forces acting into the system, where the leader control has as
	 constrain the controllability to trajectories. On the other hand, \cite{2020victorlilinana} solves  a RSC problem for the linear
	 heat equation by considering that the either the leader or follower control acts on a small part of the boundary.  In 
	 \cite{2020victorlilinana} the leader control satisfies the null controllability property. In the RSC problem for 
	 the Navier--Stokes system \cite{2018-robust-ns} all controls are external forces acting on the systems, the leader control 
	 has a local null controllability objective, while the perturbation and the follower control solve a robust control problem.  
	 However, these three works have three things in common: 1) they deal with systems whose main
	operator is a second--order operator (Laplace operator, Stokes operator), 2) independent of the configuration or 
	localization of forces (either interior or bounded), the property of the exact controllability to  the trajectories for the leader
	control remains open for nonlinear systems, and 3) as it can see, they do not present any numerical framework.
	
	In what follows we describe the main contributions of this work.
	\begin{enumerate}
	\item [1.] We solve the robust internal control problem for the nonlinear KS equation posed  on a bonded domain. 
		Our approach use central ideas from robust boundary control problem for the same equation \cite{2001TemamChangbing2}.
		To do that, several points related to regularity of solutions and to the existence of a saddle point are modified 
		and adapted.
		
		\noindent In the numerical context, to our knowledge, this paper contains the first numerical description concerning the 
		robustness process for the KS equation. Due to the high--order in space (i.e., fourth--order derivates), an 
		appropriate change of variable  will be used to implement low--order finite elements, more precisely, 
		$\mathbb{P}_1$--type  Lagrange elements, meanwhile, a $\theta$--scheme/Adams--Bashforth method is  created for the
		time discretization.  Thus, our method does not require a higher--order approach to the KS equation. Although this 
		paper  does not present an exhaustive numerical analysis of our method, since it 
		is far way of the main goals, several configurations to the  time--space discretization display good results for 
		the error (among the exact and numeric solution) in the $L^2$--norm and $L^\infty$--norm. Besides, from 
		the algorithms presented in  
		\cite{2000-bewleytemamziane,2002iterative-tachim} for the Navier--Stokes system, we propose new iterative schemes 
		of constructing the ascent and descent directions, and whose basis is the preconditioned nonlinear gradient conjugate
		method.
		
	\item [2.] Once we have obtained the robust pair, the robust stackelberg controllability (RSC) problem for the KS equation
		is studied.  The second theoretical contribution of our article is that, as far as we know, we use for the first time  
		the exact controllability to the trajectories for the leader control subject to a nonlinear system. 
		The main novelties  are new Carleman inequalities and its relationship with the 
		robustness parameters. Additionally, since the leader control obeys to the exact controllability to the
		trajectories
		and its formulation includes a coupled system of fourth--order equations, new algorithms based in regularization 
		techniques are introduced and implemented. Finally, we want to highlight the sensitivity in the robustness 
		parameters, the initial data, and also on the different subdomains for obtaining good results. Indeed, 
		numerical experiments show that non--cooperative relation among the leader control and follower might be 
		removed in some sense. 	  	
	\end{enumerate}

	\subsection{\normalsize{Main problems}}		
	In an abstract setting, the main problems to treat can be formulated as follows: let $(X,\langle\cdot,\cdot\rangle)$ 
	be an Hilbert space and let $(\mathcal{A},D(\mathcal{A}))$ be an unbounded operator in 
	$X$ such that $-\mathcal{A}$ generates an analytic semigroup in $X$. Let $(U,[\cdot,\cdot])$ be another Hilbert space and 
	for $i=1,2$,\, let $\mathcal{B}_i$ be bounded operators from $U$ into $D(\mathcal{A}^*)'$. Moreover, 
	let $\Omega$  be a nonempty bounded connected open subset of  $\mathbb{R}^d$ of class 
	$C^{\infty}$, $d\in \mathbb{N}$, and let $\omega$ be a (small) nonempty open subset of $\Omega$. Let $T>0$ be given. 
	We use the notation $Q:=\Omega\times(0,T)$,\, $\Sigma:=\partial\Omega\times(0,T)$.
	  
	Let us consider the non--homogeneous evolution problem 
	
	\begin{equation}\label{1.1.main_system.abstract}
    \left\{
    \begin{array}{lll}
    \begin{array}{llll}
    	u_{t}+\mathcal{A}u+\mathcal{N}u=h1_{\omega}+\mathcal{B}_1v+\mathcal{B}_2\psi  & \text{ in }& Q,\\
   		u(\cdot,0)=u_0(\cdot)& \text{ in }&\Omega,
    \end{array}
    \end{array}\right.
	\end{equation}
	where $\mathcal{N}$ is associated to the nonlinear part, and the functions $h,v,\,\psi$ belong to appropriate spaces. 
	Here, $1_{\omega}$ is the characteristic function of the set $\omega$. In \eqref{1.1.main_system.abstract} the interior
	forcing has been decomposed into a function $\psi$, called disturbance signal,  and two functions,
	$h$ and $v$. In our framework  $h=h(x,t)$ will be called the ``leader control'', 
    meanwhile $v=v(x,t)$  will be called the ``follower control''. To be precise, the interaction between such functions 
    and the problems that arise from them as well as the operators $\mathcal{B}_1,\, \mathcal{B}_2$ will be defined below 
    for every problem.
    In this abstract framework, the cost functional is given by
    \begin{equation}\label{eq.functional.intro}
    J_{r}(v,\psi;h):=\frac{1}{2}\iint\limits_{\mathcal{O}_d\times(0,T)}|u-u_{d}|^2         
    dxdt+\frac{1}{2}\Biggl(\ell^2\int\limits_0^T\|\mathcal{B}_1v\|_{X}^2 dt
    -\gamma^2\int\limits_0^T\|\mathcal{B}_2\psi\|_Y^2 dt \Biggr),
     \end{equation}
	where $X, Y$ are suitable Sobolev spaces, $\mathcal{O}_d$ is a nonempty open subset of $\Omega$,\,
	$u_d$ is a given function and $\ell,\gamma$ are positive constants. 
	The parameter $\ell$ can be interpreted as a measure of the ``cost'' of the control to the engineer. Thus, 
	when $\ell\to+\infty$, it corresponds to the ``expensive'' control, and results in $v\to 0$ in the minimization with respect
	to $v$ for the present problem. On the other hand, reduced values of $\ell$, corresponding to cheap control,
	reduce the increase in the cost functional upon the application of a control $v$. Similarly, the parameter 
	$\gamma$ can be interpreted as a measure of the price of the disturbance. The limit as
	$\gamma\to+\infty$ results in $\psi\to 0$ in the maximization with respect to $\psi$, and 
	reduced values of $\gamma$ decrease the cost functional upon the application of a disturbance $\psi$.
	
	\begin{problem}\label{p1.saddlepoint}
		\textit{Robust control.}  In \eqref{1.1.main_system.abstract}, $h\equiv 0$ and $\mathcal{B}_1,\, \mathcal{B}_2$ are 
		mapping from $L^2(\Omega)$ into itself. The robust internal control problem consists in finding a unique pair 
		$(\overline v,\overline\psi)\in L^2(Q)^2$ such that 
		\begin{equation}\label{saddlepoint.abstract}
    		J_{r}(\overline{v},\psi;0)\leq J_{r}(\overline{v},\overline{\psi};0)
    		\leq  J_{r}(v,\overline{\psi};0),\quad \forall (v,\psi)\in L^2(Q)^{2},
    	\end{equation}
		subject to the system \eqref{1.1.main_system.abstract}. 
	\end{problem}
	Before mentioning the other two problems that we deal in this paper, let  $\overline{u}$ be a solution of the 
	homogeneous equation:
	\begin{equation}\label{eq.target.u}
	\begin{cases}
		\overline{u}_{t}+\mathcal{A}\overline{u}+\mathcal{N}\overline{u}=0 & \text{in }Q,\\
		\overline{u}(\cdot,0)=\overline{u}_0& \text{in }\Omega.
	\end{cases}
	\end{equation}
	
 	\begin{problem}\label{p2.stackelberg}
		\textit{Stackelberg strategy.} In \eqref{1.1.main_system.abstract}, $\mathcal{B}_2\equiv 0$ and 
		$\mathcal{B}_1=1_{\mathcal{O}}$, where $\mathcal{O}$ is a small 
		open subset of $\Omega$  with $\mathcal{O}\cap\omega=\emptyset$. The hierarchic control problem consists in finding 
		a leader control $h\in L^2(0,T;L^2(\omega))$ and a unique follower control $v\in L^2(0,T;L^2(\mathcal{O}))$ minimizing 
		\eqref{eq.functional.intro}, and an associated solution $u$ to \eqref{1.1.main_system.abstract} verifying 
		$u(\cdot, T)=\overline{u}(\cdot, T)$ in $\Omega$, where $\overline{u}$ is solution of \eqref{eq.target.u}.
	\end{problem}
	
 	\begin{problem}\label{p3.robustandstackelberg}
		 \textit{Robust Stackelberg controllability.} For every fixed leader control $h$, solve 
   		 the saddle point problem for the system \eqref{1.1.main_system.abstract}, that is, to find the best control $v$ 
   		 in the presence of the disturbance $\psi$ which maximally spoils the  follower control for the
   		 system \eqref{1.1.main_system.abstract}. Once the saddle point has been identified for each leader control $h$, we 
   		 deal with the problem of finding  the control $h$ of minimal norm
   		 satisfying  constraints of exact controllability to the trajectories. More precisely, we look for a control  
   		 $\overline{h}$ such that 
    	\begin{equation}\label{cp_second_subproblem}
    		J(\overline{h})=\min\limits_{h}\frac{1}{2}\iint\limits_{\omega\times(0,T)}
    		|h|^2 dxdt,\quad \mbox{subject to the restriction}\quad u(\cdot, T)=\overline{u}(\cdot, T)\,\,\,\mbox{in}\,\,\Omega.
    	\end{equation}
	\end{problem}
	\begin{Obs}
		Note that Problem \ref{p2.stackelberg} is a particular case of Problem \ref{p3.robustandstackelberg} by  
		considering $\psi=0$ in \eqref{eq.functional.intro}. Thus, a Stackelberg strategy is a direct consequence of the 
		robust Stackelberg controllability, and therefore, in this article we will only treat Problem 
		\ref{p1.saddlepoint} and Problem \ref{p3.robustandstackelberg}. 
	\end{Obs}

\subsection{Main results}	
	A particular case of \eqref{1.1.main_system.abstract} corresponds to the Kuramoto--Sivashinsky (KS) equation, it
	is a fourth-order parabolic equation that serves as a model for phase turbulence in reaction-diffusion systems
	\cite{kuramoto1975formation, kuramoto1976persistent} 
	and also for modeling  the diffusive instabilities in a laminar flame 
	\cite{Sivashinsky1977,michelson1977nonlinear,2010purwinsdissipative,yanez2016analysis}. This equation obeys to an 
	one dimensional model, which 
	for our propose is given by
\begin{equation}\label{eq.ks}
	\begin{cases}
		u_{t}+u_{xxxx}+u_{xx}+uu_{x}=h1_{\omega}+v1_{\mathcal{O}}+\psi & \text{in }(0,1)\times(0,T)=:Q,\\
		u(0,t)=u(1,t)=u_{x}(0,t)=u_{x}(1,t)=0 & \text{on }(0,T),\\
		u(\cdot,0)=u_{0}(\cdot)& \text{in }(0,1),
	\end{cases}
\end{equation}	
	where $\omega$ and $\mathcal{O}$ are nonempty open subsets of $(0,1)$ such that  $\omega\cap\mathcal{O}=\emptyset$. 
	
	From a physical point of view, the term $u_{xx}$ is responsible for an instability at large scales; 
	the dissipative term $u_{xxxx}$ provides damping at small scales; and the non--linear term $uu_{x}$
	(which has the same form as that in the Burgers equation) 
	stabilizes by transferring energy between large and small scales. As mentioned, the terms on the right--hand side of 
	\eqref{eq.ks} are representing the leader control, the follower control and the disturbance signal, respectively.
	
	To our knowledge there is no results on  robust internal control problem for the KS system \eqref{eq.ks}.
	Thus, our paper fills this gap by using the functional 
	\eqref{eq.functional.intro} with $\mathcal{B}_1=1_{\mathcal{O}}$ into $L^2((0,1))$ and $\mathcal{B}_2=I$ onto $L^2((0,1))$.
	More precisely, the Problem \ref{p1.saddlepoint} is proved throughout the functional 
	 \begin{equation}\label{eq.functional.ks}
    	J_{r}(v,\psi;h):=\frac{1}{2}\iint\limits_{\mathcal{O}_d\times(0,T)}|u-u_{d}|^2         
    	dxdt+\frac{1}{2}\Biggl(\ell^2\iint\limits_{\mathcal{O}\times(0,T)}|v|^2 dx dt
   		 -\gamma^2\iint\limits_{Q}|\psi|^2 dxdt \Biggr).
    \end{equation}
	In the context of the robust control, the works
	\cite{2001TemamChangbing} and \cite{2001TemamChangbing2} proven robust boundary control problems for the KS equation.
	In these articles the cost functional is clearly different to 
	the presented for us in \eqref{eq.functional.intro}. On the other hand, the techniques of spatially dependent scaling 
	and static output feedback control are used in \cite{2007Sakthivel} and \cite{2003louoptimal}
	for obtaining  a robust controller design and  an optimal sensor placement for the KS equation, respectively. 
	
	Our first main result concerns the robust internal control problem for the KS equation. This is given in the 
	following theorem. 
	\begin{teo}\label{teo1.saddlepoint}
		Let $u_0\in H_0^2(0,1)$ and $h\in L^2(0,T;L^2(\omega))$ be fixed. Then, for $\gamma$ and $\ell$ sufficiently
		large, there exists  a unique saddle point $(\bar{v},\bar{\psi})\in L^2(0,T;L^2(\mathcal{O}))\times L^2(Q)$
		and $u=u(h,\bar{\psi},\bar{v})$ solution of \eqref{eq.ks} such that
		\[J_{r}(\psi,\overline{v};h)\leq J_{r}(\overline{v},\overline{\psi};h)
    		\leq  J_{r}(v,\overline{\psi};h),\quad \forall (v,\psi)\in L^2(0,T;L^2(\mathcal{O}))\times L^2(Q) .\]
	\end{teo}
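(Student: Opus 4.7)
The plan is to follow the scheme for saddle--point problems in nonlinear robust control in the spirit of \cite{2001TemamChangbing2} (where the KS equation is treated with boundary controls), adapting it to the internal control framework set in \eqref{eq.ks} and the specific cost \eqref{eq.functional.ks}. First, I would establish well--posedness and some regularity for the state: for any $(v,\psi)\in L^2(0,T;L^2(\mathcal O))\times L^2(Q)$, given $u_0\in H_0^2(0,1)$ and $h\in L^2(0,T;L^2(\omega))$, system \eqref{eq.ks} admits a unique solution $u$ in $L^2(0,T;H^4\cap H_0^2)\cap C([0,T];H_0^2)$. This requires a Galerkin / fixed--point argument together with the usual energy estimates for the linear part $\partial_t+\partial_x^{4}+\partial_x^{2}$, the antisymmetry of the transport term $\int uu_x u\,dx=0$, and Sobolev embeddings in one dimension (which give $H^2\hookrightarrow L^\infty$ and control of $uu_x$). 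The output also is to obtain a Lipschitz dependence $(v,\psi)\mapsto u$ with bounds depending only on $\|u_0\|_{H^2_0}$, $\|h\|_{L^2}$, and the data.

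Next, I would prove differentiability of the state map and derive the adjoint calculus. The derivative $u'(v,\psi)\cdot \hat v$ in direction $\hat v$ solves the linearized KS equation
\begin{equation*}
\hat u_t+\hat u_{xxxx}+\hat u_{xx}+(u\hat u)_x = \hat v\, 1_{\mathcal O},\qquad \hat u(\cdot,0)=0,
\end{equation*}
and analogously for $\hat \psi$ with source $\hat\psi$. The first derivatives of $J_r$ read
\begin{equation*}
\langle \partial_v J_r,\hat v\rangle = \iint_{\mathcal O_d\times(0,T)}(u-u_d)\hat u\,dxdt+\ell^2\iint_{\mathcal O\times(0,T)} v\hat v\,dxdt,
\end{equation*}
with the symmetric expression in $\psi$ and opposite sign on the $\gamma^2$ term, and can be rewritten through an adjoint state $p$ solving the retrograde linearized KS equation with source $(u-u_d)1_{\mathcal O_d}$. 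Then I would compute the second derivatives and show that for $\ell$ large enough, $\partial_{vv}^2 J_r$ is coercive on $L^2(\mathcal O\times(0,T))$ uniformly in $\psi$: the ``bad'' cross term coming from differentiating $(u\hat u)_x$ is controlled by the a priori bound on $u$ in $L^\infty_tH^2_x$ and a standard absorption inequality, the leftover being dominated by $\tfrac12\ell^2\|\hat v\|^2$. Symmetrically, for $\gamma$ large enough, $-\partial_{\psi\psi}^2 J_r$ is coercive on $L^2(Q)$. This is the main technical obstacle: quantifying these thresholds on $\ell,\gamma$ in terms of the data so as to kill the nonlinear contribution.

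With convex--concavity in hand, I would conclude in two steps. For every fixed $\psi$, the strict convexity and coercivity of $v\mapsto J_r(v,\psi;h)$ give a unique minimizer $v(\psi)$; for every fixed $v$, the strict concavity and coercivity (in $-\psi$) yield a unique maximizer $\psi(v)$. The classical saddle--point theorem of Ekeland--Temam (Proposition VI.2.3 in \emph{Convex Analysis and Variational Problems}) then delivers a unique $(\bar v,\bar\psi)$ satisfying the two--sided inequality stated in the theorem; alternatively, one writes the coupled optimality system for $(u,p,\bar v,\bar\psi)$ with $\bar v=-\ell^{-2} p\, 1_{\mathcal O}$ and $\bar\psi=\gamma^{-2} p$ and solves it by a Schauder/Banach fixed--point argument, again requiring $\ell,\gamma$ large to obtain contractivity. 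Uniqueness is a direct consequence of the strict convex--concave structure established in the previous step, which also ensures that any saddle point of $J_r$ must coincide with $(\bar v,\bar\psi)$.
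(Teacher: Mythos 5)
Your proposal is correct and follows essentially the same route as the paper: establish regularity and continuity of the control--to--state map, differentiate $J_r$ twice along the linearized and second--linearized KS systems, show that the $\gamma^2$ (resp.\ $\ell^2$) term dominates the state contributions for $\gamma$ (resp.\ $\ell$) large, and conclude existence and uniqueness of the saddle point from the Ekeland--Temam theorem (Proposition \ref{prop.onsaddlepoint}). The only cosmetic difference is that the paper phrases the convexity/concavity check as a one--dimensional second--derivative test $g''(0)$ along line segments and handles the quadratic source $-2u'u'_x$ of the second--linearized equation via solutions by transposition, while you phrase it as coercivity of the Hessian; these are equivalent.
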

	
	As mentioned, the second problem we aim to solve is to find the minimal norm control satisfying a controllability 
	to trajectories constrain. More precisely, let us fix a uncontrolled trajectory of system \eqref{eq.ks}, namely,
	a sufficiently regular solution to
	\begin{equation}\label{intro.target.u}
	\begin{cases}
		\overline{u}_{t}+\overline{u}_{xxxx}+\overline{u}_{xx}+\overline{u}\overline{u}_x
		=0 & \text{in }Q,\\
		\overline{u}(0,t)=\overline{u}(1,t)=\overline{u}_{x}(0,t)=\overline{u}_{x}(1,t)=0  & \text{on }(0,T),\\
		\overline{u}(\cdot,0)=\overline{u}_0& \text{in }(0,1).
	\end{cases}
	\end{equation}
	Thus, according to Problem \ref{p3.robustandstackelberg}, we look for a control $\overline{h}\in L^2(0,T;L^2(\omega))$ 
	satisfying  \eqref{cp_second_subproblem}. 
	
	In the case where $v=\psi=0$, system \eqref{eq.ks} is controllable to trajectories \cite{2011cerpamercado}. 
	Recently, for the case where the disturbance disappears, that is,  in \eqref{eq.functional.ks} $\psi\equiv 0$, 
	it is possible to deduce that the system \eqref{eq.ks} satisfies a Stackelberg strategy to trajectories 
	\cite{2018carreno-santos-stackelberg}.
	In contrast to \cite{2018carreno-santos-stackelberg}, this paper shows a different role among the forces $h, v$ and 
	$\psi$ in system \eqref{eq.ks}, and therefore, other optimization problems are carried out. In other works, this paper
	can be seen as an alternative development based in other Carleman estimates for solving Problem \ref{p2.stackelberg}.
	Actually, in our framework,  the theoretical solution to Problem \ref{p2.stackelberg}  is a consequence of 
	the simultaneous robust control and hierarchic control, see below Theorem \ref{teo2.RobustStackelbergContKS}.

	In order to present our second main result, let us define 	
	\begin{equation}\label{def.spaceZ}
		\mathcal{Z}:=C([0,T];H_0^2(0,1))\cap L^2(0,T;H^4(0,1))\cap L^\infty(0,T;W^{1,\infty}(0,1)).
	\end{equation}
	\begin{teo}\label{teo2.RobustStackelbergContKS}
		Assume that $\overline{u}\in \mathcal{Z}$ is the solution of \eqref{eq.target.u} 
		and $\omega\cap \mathcal{O}_d\neq \emptyset$. Then, for every $T>0$ and 
    	$\mathcal{O}\subset (0,1)$ open subset such that $\mathcal{O}\cap \omega=\emptyset$, 
		there exist $\gamma_0,\ell_0,\delta>0$ and a positive function $\rho=\rho(t)$ blowing up $t=T$ such that, 
		for any $\gamma\geq \gamma_0,\,\ell\geq\ell_0,\, u_0\in L^2(0,1)$ and 
		$u_d\in L^2(0,T;L^2(\mathcal{O}_d))$
    	satisfying  
		\begin{equation}\label{hypothesis.weight.mainresult}
			\|u_0-\overline{u}_0\|_{L^2(0,1)}+
			\displaystyle\iint\limits_{\mathcal{O}_d\times(0,T)}\rho^2(t)|\overline{u}-u_d|^2 dxdt \leq \delta,
		\end{equation}	    
    	there exist a leader control $h\in  L^2(0,T;L^2(\omega))$
 		and a unique saddle point $(\overline{v},\overline{\psi})\in L^2(0,T;L^2(\mathcal{O}))\times L^2(Q)$ 
		for the functional given by \eqref{eq.functional.ks}, and an 
    	associated solution $u$ to \eqref{eq.ks} verifying $u(\cdot,T)=\overline{u}(\cdot,T) \mbox{ in }\, (0,1).$
	\end{teo}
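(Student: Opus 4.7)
\medskip

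\noindent\textbf{Proof proposal.} The plan is to reduce Theorem \ref{teo2.RobustStackelbergContKS} to a null controllability problem for the difference $z:=u-\overline u$, with the leader $h$ playing the role of the control and the saddle point $(\overline v(h),\overline\psi(h))$ supplied by Theorem \ref{teo1.saddlepoint} viewed as feedback terms. First I would invoke Theorem \ref{teo1.saddlepoint}: for every fixed $h\in L^2(0,T;L^2(\omega))$ there is a unique saddle $(\overline v,\overline\psi)$ for $J_r(\cdot,\cdot;h)$. Writing the Euler--Lagrange system for \eqref{eq.functional.ks} one obtains the characterization
\begin{equation*}
\overline v=-\ell^{-2}\,1_{\mathcal O}\,p,\qquad \overline\psi=\gamma^{-2}\,p,
\end{equation*}
where $p$ solves a backward adjoint KS equation with source $1_{\mathcal O_d}(u-u_d)$. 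Subtracting the target system \eqref{intro.target.u} from \eqref{eq.ks} and setting $z:=u-\overline u$, $q:=p$, the coupled optimality system becomes
\begin{equation*}
\begin{cases}
z_t+z_{xxxx}+z_{xx}+(\overline u\,z)_x+zz_x=h\,1_\omega-\ell^{-2}\,1_{\mathcal O}\,q+\gamma^{-2}\,q, & \text{in }Q,\\
-q_t+q_{xxxx}+q_{xx}-\overline u\,q_x-z\,q_x=1_{\mathcal O_d}(z+\overline u-u_d), & \text{in }Q,\\
z(\cdot,0)=u_0-\overline u_0,\quad q(\cdot,T)=0, &
\end{cases}
\end{equation*}
together with the homogeneous clamped boundary conditions. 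The objective \eqref{cp_second_subproblem} becomes: find $h$ of minimal norm driving $z(\cdot,T)=0$ for this coupled, nonlinear system.

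Next I would linearize around $\overline u$, replacing $zz_x$ by a source $g_1$ and $z\,q_x$ by a source $g_2$ lying in a prescribed functional ball, and study the linear control problem. The cornerstone is a global Carleman estimate for the adjoint of this coupled linearization. Following the strategy of \cite{2011cerpamercado} for the single KS equation and adapting the weights from \cite{2018carreno-santos-stackelberg} to the condition $\omega\cap\mathcal O_d\neq\emptyset$, I would derive an inequality of the form
\begin{equation*}
\iint_Q e^{-2s\alpha}\xi^{k}\bigl(|\varphi|^2+|\eta|^2\bigr)\,dx\,dt\le C\iint_{\omega\times(0,T)} e^{-2s\alpha}\xi^{m}|\varphi|^2\,dx\,dt,
\end{equation*}
for the adjoint variables $(\varphi,\eta)$, where $\alpha,\xi$ are the standard Fursikov--Imanuvilov weights for the interval, blowing up at $t=0,T$. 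The cross couplings $\gamma^{-2}\varphi$ and $\ell^{-2}1_{\mathcal O}\varphi$ that appear in the adjoint of the follower/disturbance block are absorbed into the dissipative left-hand side provided $\gamma\ge\gamma_0$ and $\ell\ge\ell_0$; this absorption is the reason the parameters must be taken large. From this observability I recover, by Hilbert Uniqueness Method together with the source-term approach of Liu--Takahashi--Tucsnak, a control $h$ and states $(z,q)$ with $\rho\,z\in L^2(Q)$, $\rho^{-1}z_{xxxx}\in L^2(Q)$, etc., for a suitable weight $\rho\to+\infty$ as $t\to T$, thereby forcing $z(\cdot,T)=0$.

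The last step is to remove the linearization. I would set up a fixed-point map $\Lambda:(g_1,g_2)\mapsto(zz_x,z\,q_x)$ on a closed ball of a weighted space (whose radius is dictated by the Carleman constant), verify continuity and compactness via the regularity $\overline u\in\mathcal Z$ defined in \eqref{def.spaceZ} and parabolic smoothing, and apply Schauder's theorem. The smallness hypothesis \eqref{hypothesis.weight.mainresult} enters precisely here: it guarantees that the initial datum and the $u_d$-driven source in the linearized controllability cost are small enough for $\Lambda$ to map the ball into itself, yielding a fixed point $(z,q,h)$ that solves the full nonlinear coupled system. Uniqueness of the saddle point in Theorem \ref{teo1.saddlepoint} then identifies $(\overline v,\overline\psi)=(-\ell^{-2}1_{\mathcal O}q,\gamma^{-2}q)$, completing the proof.

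I expect the main obstacle to be the Carleman estimate for the coupled adjoint system. The KS adjoint is fourth order, so the weight function has to be engineered so that boundary terms from successive integrations by parts vanish or have the correct sign, and at the same time the coupling terms arising from the saddle point characterization must be dominated by the dissipative principal part uniformly in $\gamma\ge\gamma_0$ and $\ell\ge\ell_0$. Threading the observation domain $\omega\cap\mathcal O_d$ through both equations, while retaining a single observation on $\omega$, is the delicate point; once it is available, the rest (source-term method plus Schauder fixed point) is standard.
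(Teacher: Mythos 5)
Your proposal follows the same overall architecture as the paper's proof: characterize the saddle point via the adjoint variable (Lemma \ref{lema.ks.characterization}), translate by the trajectory $\overline u$ to reduce the problem to local null controllability of the coupled optimality system (the paper's \eqref{eq.ks.coupled.nonlinear.lastsystem}), prove a Carleman estimate for the coupled fourth--order adjoint system in which the couplings $\gamma^{-2}\varphi$ and $\ell^{-2}1_{\mathcal{O}}\varphi$ are absorbed for $\gamma,\ell$ large and the local term in $\theta$ is traded for a local term in $\varphi$ using $\omega\cap\mathcal{O}_d\neq\emptyset$, and finally pass to the nonlinear system by a local argument under the smallness hypothesis \eqref{hypothesis.weight.mainresult}. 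You differ at two junctures, both substitutions of one standard tool for another. For the linear control step the paper does not use HUM/duality plus a source--term method; it solves a penalized extremal problem \`a la Fursikov--Imanuvilov, builds a bilinear form $a(\cdot,\cdot)$ on a space $P_0$ of smooth test pairs, and obtains the control from Lax--Milgram (Proposition \ref{prop.null.control}), which delivers the triple $(u,z,h)$ directly in the weighted space $E$ needed afterwards. For the nonlinear step the paper applies Lyusternik's inverse function theorem (Theorem \ref{teo_inverse_mapping}) to the map $\mathcal{A}(w,z,h)$, which only requires the bilinear continuity estimates on $w^1w^2_x$ and $w^1z^2_x$ in the weighted spaces together with surjectivity of $\mathcal{A}'(0,0,0)$, whereas your Schauder scheme would additionally need a compactness argument and an invariance--of--the--ball estimate; both routes are viable. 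One caveat on your sketch of the Carleman inequality: writing the same weight $e^{-2s\alpha}\xi^{k}$ in front of both adjoint components will not close the argument. The paper needs distinct weights, $e^{-2s\alpha-2a_0s\widehat\alpha}$ for $\theta$ and $e^{-2m_0s\alpha}$ for $\varphi$, with the exponents constrained by \eqref{constantes}, precisely so that the source $\theta 1_{\mathcal{O}_d}$ in the $\varphi$--equation and the cross terms generated when estimating the local $\theta$--integral by $\varphi$ can be absorbed. You correctly flag the weight engineering as the delicate point, but the asymmetry between the two weights is an essential structural feature of the estimate rather than a technicality.
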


	It is worth mentioning again that the theoretical results known up to now 
	on robust Stackelberg controllability (Problem \ref{p3.robustandstackelberg}) are \cite{2018-robust-heat}, 
	\cite{2018-robust-ns} and \cite{2020victorlilinana}, and there is no evidence on both numerical algorithms and 
	a controllability to trajectories constrain for the leader control for nonlinear systems. 
	Therefore, this paper we pretend to show theoretical results and carry out  
	numerical schemes jointly with its implementation to Problems \ref{p1.saddlepoint},\ref{p3.robustandstackelberg} 
	for the KS equation \eqref{eq.ks}. 
	
	\vskip 0.3cm
	The rest of the paper is divided as follows: Section \ref{section.robust.control} contains all theoretical and 
	numerical answers to the robust control problem (see Problem \ref{p1.saddlepoint}) for the system \eqref{eq.ks}. 
	First, we present the existence, uniqueness and characterization of the robust control throughout optimal 
	control tools. Afterwards, a discrete scheme for the KS equation \eqref{eq.ks} as well as the procedure
	to the robust internal control problem are presented. 
	We devote  Section \ref{section.controllability} to prove the robust Stackelberg strategy 
	for the KS equation, see Theorem \ref{teo2.RobustStackelbergContKS}. That means, we prove the exact controllability 
	to the trajectories for the coupled KS system that arises as characterization of the robust control problem. 
	In the theoretical framework, the main tools will be 
	new Carleman estimates and fixed point arguments for coupled fourth--order parabolic systems. 
	Meanwhile, the implemented numerical scheme in the previous section will be adapted and complemented for coupled 
	and discretized KS equations.

%%%%%%%%%%%%%%%%%%%%%%%%%%%%%%%%%%%%%%%%%%%%%%%%%%%
%%%%%%%%% ROBUST CONTROL PROBLEM %%%%%%%%%%%%%%%
%%%%%%%%%%%%%%%%%%%%%%%%%%%%%%%%%%%%%%%%%%%%%%%%%%%
\section{The robust control problem}\label{section.robust.control}	
	The main objetive in robust interior control is to determine the best control function
	$v\in L^2(0,T;L^2(\mathcal{O}))$ in the presence of the disturbance $\psi\in L^2(Q)$ which maximally spoils
	the control. In this section we prove the existence, uniqueness and characterization of a solution to the robust 
	internal control problem established in Problem \ref{p1.saddlepoint} and Theorem \ref{teo1.saddlepoint}. 
	In what follows, we assume that the leader $h$ has made a choice, so will keep it fixed along this section.

%\subsection{\normalsize{The robust control problem}}	
%%%%%%%%% ROBUST CONTROL PROBLEM: LINEAR CASE %%%%%%%%%%%%%%%	
\subsection{\normalsize{Existence of the saddle point}}\label{subsection.saddlepoint}	
	This subsection is devoted to solve the minimization problem concerning the robust control problem. First,
	we prove the existence of a saddle point for the functional defined in \eqref{eq.functional.ks}. 
	The proof of existence of a saddle point $(\bar{v},\bar\psi)$  (Problem \ref{p1.saddlepoint}) is based on the 
	following proposition. Its proof can be found in \cite{bookekelandTemam}.   
	\begin{proposition}\label{prop.onsaddlepoint}
		Let $J$ be a functional defined on $X\times Y$ , where $X$ and $Y$ are convex, closed, non--empty, unbounded sets. If
		\begin{enumerate}
		\item [a)]$\forall v\in X,\,\psi\longmapsto J(v,\psi)$ is concave and upper semicontinuous.
		\item [b)]$\forall\psi\in Y,\,v\longmapsto J(v,\psi)$ is convex and lower semicontinuous.
		\end{enumerate}
		Then the functional $J$ possesses at least one saddle point $(\bar{v},\bar{\psi})$ on $X\times Y$ and 
		\begin{equation}\label{saddlepoint.cond}
		J(\bar{v},\bar{\psi})=\min_{v\in X}\sup_{\psi\in Y}\,J({v},{\psi})=\max_{\psi\in Y}\inf_{v\in X}
		\,J({v},{\psi}).
		\end{equation}
		Moreover, if $J$ is strictly concave with respect to $\psi$ and strictly convex with respect to $v$, $(\bar{v},\bar{\psi})$
		is unique.
	\end{proposition}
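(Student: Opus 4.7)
The plan is to establish this classical minimax result via an approximation argument reducing to the bounded case, combined with Ky Fan's minimax theorem (or equivalently von Neumann's theorem on compact convex sets). Because the sets are unbounded, the only non-routine ingredient will be extracting a priori bounds on the approximate saddle points, which in practice comes from coercivity of $J$ inherited from the strict convexity/concavity (as in the concrete $J_r$ of \eqref{eq.functional.ks}, where the quadratic penalties in $\ell^2$ and $-\gamma^2$ furnish the needed growth).

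First I would truncate: set $X_n := X \cap \overline{B}(0,n)$ and $Y_n := Y \cap \overline{B}(0,n)$, which are convex, closed, bounded and eventually nonempty. On $X_n \times Y_n$, each partial function $v \mapsto J(v,\psi)$ is convex and lower semicontinuous, and $\psi \mapsto J(v,\psi)$ is concave and upper semicontinuous. Bounded, closed, convex subsets of a reflexive Hilbert space are weakly compact, and convex lower semicontinuous functions are weakly lower semicontinuous, so the classical von Neumann–Ky Fan theorem yields a saddle point $(v_n,\psi_n) \in X_n \times Y_n$ satisfying $J(v_n,\psi) \leq J(v_n,\psi_n) \leq J(v,\psi_n)$ for all $(v,\psi) \in X_n \times Y_n$.

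Next I would pass to the limit $n \to \infty$. This is the main obstacle, since $X,Y$ are unbounded. Picking arbitrary reference points $v_0 \in X$ and $\psi_0 \in Y$ (both lying in $X_n,Y_n$ for large $n$), the saddle inequalities give $J(v_n,\psi_0) \leq J(v_0,\psi_n)$; combined with the coercivity $J(v,\psi_0) \to +\infty$ as $\|v\| \to \infty$ and $J(v_0,\psi) \to -\infty$ as $\|\psi\| \to \infty$, this forces $\{v_n\}$ and $\{\psi_n\}$ to be bounded. Extracting weakly convergent subsequences $v_n \rightharpoonup \bar v$ in $X$ and $\psi_n \rightharpoonup \bar\psi$ in $Y$ (closed convex sets are weakly closed, so $\bar v \in X$, $\bar\psi \in Y$), the weak lower/upper semicontinuity of the partial functions lets me pass to the limit in the truncated saddle inequalities, using that any fixed $(v,\psi) \in X \times Y$ eventually belongs to $X_n \times Y_n$. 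This yields $J(\bar v,\psi) \leq J(\bar v,\bar\psi) \leq J(v,\bar\psi)$ for all $(v,\psi) \in X \times Y$, which is the saddle condition; the identity \eqref{saddlepoint.cond} is then the standard consequence that at a saddle point the values $\min_v \sup_\psi J$ and $\max_\psi \inf_v J$ coincide with $J(\bar v,\bar\psi)$.

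For uniqueness under strict convexity in $v$ and strict concavity in $\psi$, I would argue by contradiction: suppose $(\bar v_1,\bar\psi_1)$ and $(\bar v_2,\bar\psi_2)$ are two distinct saddle points. A well-known consequence of the saddle-point relations is that any two saddle points are interchangeable, i.e.\ $(\bar v_1,\bar\psi_2)$ and $(\bar v_2,\bar\psi_1)$ are also saddle points sharing the common value. If $\bar v_1 \neq \bar v_2$, then $v_\lambda := \lambda \bar v_1 + (1-\lambda)\bar v_2$ for $\lambda \in (0,1)$ satisfies $J(v_\lambda,\bar\psi_1) < \lambda J(\bar v_1,\bar\psi_1) + (1-\lambda) J(\bar v_2,\bar\psi_1)$ by strict convexity, contradicting the minimality of the common value. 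A symmetric argument using strict concavity rules out $\bar\psi_1 \neq \bar\psi_2$, completing the proof.
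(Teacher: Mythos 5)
The paper does not actually prove this proposition: it is quoted as a known result and attributed to Ekeland--Temam, so there is no in-paper argument to compare against. Your reconstruction follows the standard route of that reference (truncate to bounded convex sets, apply the compact convex--concave minimax theorem via weak compactness and weak semicontinuity, then pass to the limit), and both the limit passage and the interchangeability-plus-strict-convexity uniqueness argument are carried out correctly.

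There is, however, one genuine gap, and it sits exactly where you flagged the ``non-routine ingredient''. The a priori bound on $(v_n,\psi_n)$ uses the coercivity conditions $J(v,\psi_0)\to+\infty$ as $\|v\|\to\infty$ and $J(v_0,\psi)\to-\infty$ as $\|\psi\|\to\infty$, but these appear nowhere among the hypotheses a)--b) of the proposition. They cannot be dispensed with: $J(v,\psi)=v+\psi$ on $\mathbb{R}\times\mathbb{R}$ satisfies a) and b) and has no saddle point, so the proposition as stated is false and your proof cannot close without importing the missing assumption (the corresponding statement in Ekeland--Temam does include these coercivity hypotheses, which the paper's quotation drops). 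Your remark that the coercivity is ``inherited from the strict convexity/concavity'' is also not correct in general --- $v\mapsto e^{-v}$ is strictly convex yet bounded below and non-coercive --- although it does hold for the concrete functional $J_r$ of \eqref{eq.functional.ks}, where the quadratic terms in $\ell^2$ and $-\gamma^2$ give exactly the required growth, so the downstream application in Theorem \ref{teo1.saddlepoint} is unaffected. The clean fix is to add the coercivity as an explicit hypothesis c); with that in place your argument goes through verbatim.
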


	In order to guarantee the existence of the saddle point $(\bar{v},\bar{\psi})$, we prove the following lemma.
	\begin{lemma}\label{lema.existence}
		Let $u_0\in H_0^2(0,1)$ be given. Then, there exists positive constants $\gamma_0$ and  $\ell_0$ such that, 
		for any $\gamma\geq \gamma_0$ and $\ell\geq\ell_0$ we have
		\begin{enumerate}
			\item [a)]$\forall v\in L^2(0,T;L^2(\mathcal{O})),\,\psi\longmapsto J_r(v,\psi)$ is concave and upper semicontinuous.
			\item [b)]$\forall\psi\in L^2(Q),\,v\longmapsto J_r(v,\psi)$ is convex and lower semicontinuous.
		\end{enumerate}
	\end{lemma}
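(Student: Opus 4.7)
The plan is to exploit the coercive quadratic penalties in \eqref{eq.functional.ks}---strictly convex in $v$ with modulus $\ell^2$ and strictly concave in $\psi$ with modulus $\gamma^2$---to dominate the nonlinear dependence of the tracking term $\|u-u_d\|_{L^2(\mathcal{O}_d\times(0,T))}^2$ on $(v,\psi)$. All estimates rest on the well-posedness of \eqref{eq.ks} in the space $\mathcal{Z}$ of \eqref{def.spaceZ}, which I would establish in a preliminary step by Galerkin approximation, yielding an a priori bound $\|u(v,\psi)\|_{\mathcal{Z}}\le K(\|u_0\|_{H_0^2},\|h\|_{L^2},\|v\|_{L^2},\|\psi\|_{L^2})$. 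Two features of the Kuramoto--Sivashinsky equation will be used throughout: the skew-symmetry $(uu_x,u)_{L^2(0,1)}=0$ provided by the homogeneous boundary conditions, which keeps the basic energy estimates dissipative, and the 1D Sobolev embedding $H^1(0,1)\hookrightarrow L^\infty(0,1)$, which allows products of the form $(u_1-u_2)(u_1-u_2)_x$ to be controlled quadratically in $\|u_1-u_2\|_{H^1}$.

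\textbf{Semicontinuity.} For item (a), fix $v$ and take $\psi_n\to\psi$ in $L^2(Q)$. Writing the equation satisfied by $u_n-u$ with $u_n=u(v,\psi_n)$, $u=u(v,\psi)$, and using the identity $u_nu_{n,x}-uu_x=\tfrac12\partial_x((u_n-u)(u_n+u))$ together with uniform $\mathcal{Z}$-bounds on $\{u_n\}$, standard energy estimates give $u_n\to u$ in $C([0,T];L^2(0,1))\cap L^2(0,T;H^2(0,1))$. Hence the tracking integral converges, and since $\|\psi_n\|_{L^2}\to\|\psi\|_{L^2}$ we get $J_r(v,\psi_n)\to J_r(v,\psi)$: the map $\psi\mapsto J_r(v,\psi)$ is continuous and in particular upper semicontinuous. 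The argument for lower semicontinuity of $v\mapsto J_r(v,\psi)$ in (b) is identical.

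\textbf{Concavity/convexity.} For the concavity claim in (a), fix $v$ and pick $\psi_1,\psi_2\in L^2(Q)$, $\lambda\in[0,1]$; set $\psi_\lambda=(1-\lambda)\psi_1+\lambda\psi_2$, $u_i=u(v,\psi_i)$, $u_\lambda=u(v,\psi_\lambda)$, $\tilde u=(1-\lambda)u_1+\lambda u_2$, and $w=u_\lambda-\tilde u$. A direct expansion based on the parallelogram identity gives
\begin{equation*}
\begin{aligned}
J_r(v,\psi_\lambda)-(1-\lambda)J_r(v,\psi_1)-\lambda J_r(v,\psi_2)
&=\tfrac{\gamma^2\lambda(1-\lambda)}{2}\|\psi_1-\psi_2\|_{L^2(Q)}^2\\
&\quad -\tfrac{\lambda(1-\lambda)}{2}\iint\limits_{\mathcal{O}_d\times(0,T)}|u_1-u_2|^2\,dx\,dt\\
&\quad +\iint\limits_{\mathcal{O}_d\times(0,T)}(\tilde u-u_d)\,w\,dx\,dt
+\tfrac{1}{2}\iint\limits_{\mathcal{O}_d\times(0,T)}|w|^2\,dx\,dt.
\end{aligned}
\end{equation*}
A short calculation shows that $u_1-u_2$ solves a linearized KS equation with source $\psi_1-\psi_2$, while $w$ solves a linearized KS equation with variable coefficients involving $u_\lambda+\tilde u$ and source $\tfrac{\lambda(1-\lambda)}{2}\partial_x((u_1-u_2)^2)$. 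Energy estimates exploiting the dissipation of $\partial_x^{4}$ and the 1D Sobolev inequality yield
\[
\|u_1-u_2\|_{L^2(Q)}\le C\|\psi_1-\psi_2\|_{L^2(Q)},\qquad
\|w\|_{L^2(Q)}\le C\lambda(1-\lambda)\|\psi_1-\psi_2\|_{L^2(Q)}^2,
\]
so the second and third terms on the right-hand side of the displayed identity are of order at most a constant (depending on $\|\tilde u-u_d\|_{L^2}$) times $\lambda(1-\lambda)\|\psi_1-\psi_2\|_{L^2(Q)}^2$, while the fourth is nonnegative and hence favorable. For $\gamma\ge\gamma_0$ sufficiently large the $\gamma^2$-term dominates, so the left-hand side is nonnegative: this yields the concavity in (a); strict concavity when $\psi_1\neq\psi_2$ delivers the uniqueness statement of Theorem \ref{teo1.saddlepoint}. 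The convexity of $v\mapsto J_r(v,\psi)$ in (b) follows by the symmetric argument, with $\tfrac{\ell^2\lambda(1-\lambda)}{2}\|v_1-v_2\|_{L^2}^2$ playing the role of the dominating term.

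\textbf{Main obstacle.} The constants $C$ in the sensitivity estimates grow with the $\mathcal{Z}$-norms of $u_1$ and $u_2$, which themselves grow with $\|\psi_i\|_{L^2}$ (resp.\ $\|v_i\|_{L^2}$). Producing thresholds $\gamma_0,\ell_0$ that are uniform in the $v,\psi$ used to test concavity/convexity will require careful bookkeeping of these dependences---combining the smoothing of $\partial_x^{4}$ with the 1D embedding $H^1\hookrightarrow L^\infty$, and, if necessary, invoking the coercivity of $J_r$ itself to confine the relevant $(v,\psi)$ to a fixed ball. This quantitative control of the nonlinear tracking term is, in my view, the technical heart of the lemma.
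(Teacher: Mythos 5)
Your route to concavity/convexity is genuinely different from the paper's. The paper works with the second-derivative test: it sets $g(\rho)=J_r(v,\psi+\rho\psi')$, computes $g''$ through the first- and second-order state sensitivities $u'$ (solving \eqref{eq.ks.uprima}) and $u''$ (solving \eqref{eq.ks.udosprima} with source $-2u'u'_x$), bounds $\iint_{\mathcal{O}_d\times(0,T)}|u'|^2\le C_1\|\psi'\|^2_{L^2(Q)}$ via Lemma \ref{apendix.lema.strong.linear} and $\iint(u-u_d)u''\le \tfrac{C_2}{2}\|u'\|^2_{L^2(Q)}$ via the transposition solution of Definition \ref{def.weakweaksol} (since $-2u'u'_x=-(|u'|^2)_x\in L^1(0,T;W^{-1,1})$), and concludes $g''\le(C_1+C_1C_2/2-\gamma^2)\|\psi'\|^2$, giving $\gamma_0^2=C_1+C_1C_2/2$. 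Your chord expansion reorganizes essentially the same estimates: $u_1-u_2$ plays the role of $u'$, and the defect $w=u_\lambda-\tilde u$, driven by $\tfrac{\lambda(1-\lambda)}{2}\partial_x((u_1-u_2)^2)$, plays the role of $u''$ and needs the same $L^1(W^{-1,1})$ transposition framework. Your closing remark about constants depending on the $\mathcal{Z}$-norm of the base state is well taken — the paper's $C_1$ and $C_2$ have the same hidden dependence on $(v,\psi)$ through $u$ and through the dual solution of \eqref{apendix.linearized.w}, and the paper does not address uniformity either. Your semicontinuity argument matches the paper's.

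There is, however, a concrete gap in your convexity half that the derivative test avoids. In your chord identity the defect term $\tfrac12\iint_{\mathcal{O}_d\times(0,T)}|w|^2$ has a fixed sign: it is favorable for concavity (where you need the left-hand side $\ge 0$) but \emph{unfavorable} for convexity (where you need it $\le 0$), and your own estimate $\|w\|_{L^2}\le C\lambda(1-\lambda)\|v_1-v_2\|^2$ makes it \emph{quartic} in $\|v_1-v_2\|$. No fixed $\ell_0$ lets the quadratic term $\tfrac{\ell^2\lambda(1-\lambda)}{2}\|v_1-v_2\|^2$ dominate $C^2\lambda^2(1-\lambda)^2\|v_1-v_2\|^4$ uniformly over $v_1,v_2\in L^2(0,T;L^2(\mathcal{O}))$; similarly the cross term carries the factor $\|\tilde u-u_d\|_{L^2}$, which is unbounded over the admissible set. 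So "the symmetric argument" does not deliver convexity on the whole (unbounded) space, which is what Proposition \ref{prop.onsaddlepoint} requires. The second-derivative route sidesteps this because $g''(\rho)$ is exactly quadratic in the direction $v'$ — the second-order sensitivity $u''$ enters linearly, not through a squared norm. To repair your argument you would either switch to the derivative test for the convexity half, or first confine $(v,\psi)$ to a bounded set by a coercivity argument and then adjust the saddle-point existence theorem accordingly.
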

	\begin{proof}[Proof of Lemma \ref{lema.existence}]\smallskip\
	First, since the norm is continuous, we only need to check the continuity of the first term in $J_r$ with respect to $v,\psi$. 
	To do this, let $u^{i}=u^{i}(v^{i},\psi^{i})\in C([0,T];H_0^2(0,1))\cap L^2(0,T;H^4(0,1))$, $i=1,2$ be the solutions of 
	equation \eqref{eq.ks} associated with the corresponding 
	external sources in $L^2(Q)$ (see Lemma \ref{lema.nonlinearregularity} and remark \ref{obs.final}). 
	Let $\delta u=u^1-u^2$, $\delta v=v^1-v^2$ and $\delta\psi=\psi^1-\psi^2$. 
	Using \eqref{eq.ks}, it is easy to verify that $\delta u$ satisfies the following system
	\begin{equation}\label{eq.ks.delta}
	\begin{cases}
		(\delta u)_{t}+(\delta u)_{xxxx}+(\delta u)_{xx}+u^1(\delta u)_x+u^2_{x}(\delta u)
		=\delta v+\delta\psi & \text{in }(0,1)\times(0,T)=:Q,\\
		(\delta u)(0,t)=(\delta u)(1,t)=(\delta u)_{x}(0,t)=(\delta u)_{x}(1,t)=0  & \text{on }(0,T),\\
		(\delta u)(\cdot,0)=0& \text{in }(0,1).
	\end{cases}
	\end{equation}	
	Due to the $u^1,u^2_x\in L^\infty(Q)$,  lemma \ref{apendix.lema.strong.linear}
	allows us to guarantee the existence of a positive constant $C$ depending on 
	$\|u^1\|_{L^\infty(Q)}$ and $\|u^2_x\|_{L^\infty(Q)}$ such that
	\[\|\delta u\|^2_{L^2(0,T;H^2(0,1))}\leq C(\|\delta v\|^2_{L^2(Q)}+\|\delta\psi\|^2_{L^2(Q)}).\]
	This complete the continuity of $J_r$ with respect to $(v,\psi)$. 
	\begin{enumerate}
	\item [a)] Since the norm is lower semicontinuous, the map $\psi\longmapsto J_r(v,\psi)$ is upper semicontinuous. In order to 
	prove the concavity, it is enough to show that 
	\[g(\rho)=J_{r}(v,\psi+\rho\psi' )\]
	is concave with respect to $\rho$ near $\rho=0$, that is, $g''(0)\leq 0$. 
	
	Let $u'=u'(0,\psi')=\frac{Du}{D\psi}\cdot\psi'$. Then $u'$ is the solution of 
	\begin{equation}\label{eq.ks.uprima}
	\begin{cases}
		u'_{t}+u'_{xxxx}+u'_{xx}+uu'_x++u_{x}u'
		=\psi' & \text{in }Q,\\
		u'(0,t)=u'(1,t)=u'_{x}(0,t)=u'_{x}(1,t)=0  & \text{on }(0,T),\\
		u'(\cdot,0)=0& \text{in }(0,1).
	\end{cases}
	\end{equation}	
	By computing, we have
	\begin{equation}\label{eq.der1}	
		g'(\rho)=\frac{DJ_r}{D\psi}(0,\psi+\rho\psi')\cdot\psi'=\int\limits_0^T(u-u_d,u')_{L^2(\mathcal{O}_d)}dt
		-\gamma^2\int\limits_0^T(\psi+\rho\psi',\psi')_{L^2(0,1)}dt.
	\end{equation}
	Similarly, let $\widehat{\psi'}\in L^2(Q)$ be another disturbance direction, and consider 
	$u''=\frac{D^2u}{D\psi^2}\cdot\psi'\cdot\widehat{\psi'}$, which solves the following system:
	\begin{equation}\label{eq.ks.udosprima}
	\begin{cases}
		u''_{t}+u''_{xxxx}+u''_{xx}+uu''_x+u_{x}u''
		=-w^2w^1_{x}-w^1w^2_{x}& \text{in }Q,\\
		u''(0,t)=u''(1,t)=u''_{x}(0,t)=u''_{x}(1,t)=0  & \text{on }(0,T),\\
		u''(\cdot,0)=0& \text{in }(0,1),
	\end{cases}
	\end{equation}		
	where $w^1=u'=\frac{Du}{D\psi}\cdot\psi'$ and $w^2=u'=\frac{Du}{D\psi}\cdot\widehat{\psi'}$ are solutions 
	of \eqref{eq.ks.uprima}. 
	By taking $\widehat{\psi'}=\psi'$ and thus $w^1=w^2$, we really have on the right--hand side of the equation
	\eqref{eq.ks.udosprima} the term $-2u'u'_{x}$.
	
	On the other hand, from \eqref{eq.der1} we get
	\begin{equation}\label{eq.der2}	
		g''(\rho)=\iint\limits_{\mathcal{O}_d\times(0,T)}(u-u_d)u''\,dxdt+\iint\limits_{\mathcal{O}_d\times(0,T)}|u'|^2\,dxdt
		-\gamma^2\iint\limits_{Q}|\psi'|^2 dxdt.
	\end{equation}
	Now, we will see that for $\gamma$ sufficiently large, the last term in the above identity dominates in the expression \eqref{eq.der2},
	and therefore $g''(0)\leq 0$, for $(v,\psi)\in L^2(0,T;L^2(\mathcal{O}))\times L^2(Q)$. We begin by estimating the 
	second term. Thanks to the assumptions that $u\in\mathcal{Z}$ (see \eqref{def.spaceZ} and lemma 
	\ref{lema.nonlinearregularity}),  lemma 
	\ref{apendix.lema.strong.linear} can be applied to the linearized system 
	\eqref{eq.ks.uprima}. Thus, for any $\psi'\in L^2(Q)$, there exists a unique solution 
	$u'\in C([0,T];L^2(0,1))\cap L^2(0,T;H^2(0,1))\equiv W(0,T)$ to \eqref{eq.ks.uprima}
	such that 
	\begin{equation}\label{estimate.c1}
		\iint\limits_{\mathcal{O}_d\times(0,T)}|u'|^2\,dxdt\leq C_1\iint\limits_{Q}|\psi'|^2 dxdt,
	\end{equation}
	where $C_1$ is a positive constant.
	 	
	To estimate the first term, we need an upper bound for $u''$. Using the fact that  $u'\in W(0,T)$, it follows that 
	$|u'|^2\in L^1(0,T;W^{0,1}(0,1))$. Then, we have  that $(|u'|^2)_x$ belongs to $L^1(0,T;W^{-1,1})$. 
	Applying lemma \ref{apendix.lemma.weak} with $y=u'',\overline{y}=u$ and $f=(|u'|^2)_x$, the linearized system 
	\eqref{eq.ks.udosprima} has a unique solution $u''\in C([0,T];H^{-2}(0,1))\cap L^2(0,T;L^2(0,1))$. In addition, 
	from definition \ref{def.weakweaksol} we obtain
	 \begin{equation}
	 	\iint\limits_{Q}(u-u_d)u''\,dxdt=\langle -2u'u'_x,w\rangle_{L^1(0,T;W^{-1,1}), L^\infty(0,T;W^{1,\infty}(0,1))},
	 \end{equation}	
	 where $w\in\mathcal{Z}$ is the solution of \eqref{apendix.linearized.w}. 
	 
 	Thus, there exists a positive constant $C_2$ only depending on $\|w\|_{L^\infty(0,T;W^{1,\infty}(0,1))}$ such that
	 \begin{equation}
	 	\iint\limits_{Q}(u-u_d)u''\,dxdt\leq C_2\|u'u'_x\|_{L^1(0,T;W^{-1,1}(0,1))}\leq \frac{C_2}{2}\|u'\|^2_{L^2(Q)}.
	 \end{equation}	
	Using again that $u'\in W(0,T)$ is solution of \eqref{eq.ks.uprima} and the previous inequality,  we deduce
	\begin{equation}\label{estimate.c2}
	 	\iint\limits_{\mathcal{O}_d\times(0,T)}(u-u_d)u''\,dxdt\leq  \frac{C_2}{2}\|u'\|^2_{L^2(Q)}\leq C_1\frac{C_2}{2}\|\psi'\|^2_{L^2(Q)}.
	 \end{equation}	
	 Putting together \eqref{eq.der1}, \eqref{estimate.c1} and \eqref{estimate.c2} yields
	 \[g''(\rho)\leq \Bigl(C_1+C_1\frac{C_2}{2}-\gamma^2\Bigr)\|\psi'\|^2_{L^2(Q)},\quad \forall\psi'\in L^2(Q),\,\psi'\neq 0.\]
	 Therefore, under the assumption that $\gamma^2\geq \gamma_0^2:=C_1+C_1\displaystyle\frac{C_2}{2}$, we have 
	 $g''(\rho)\leq 0$ for all $\rho\in\mathbb{R}$. 
	 Thus, the function $g$ is concave and the strictly concavity of $\psi\longmapsto J_r(v,\psi)$ follows for  $\gamma$ 
	 large enough.
	 \item [b)] Under the same scheme of the above proof, in order to show convexity of the map 
	 $v\longmapsto J_r(v,\psi)$, it
	 is sufficient to prove that 
	\[g(\rho)=J_{r}(v+\rho v',\psi)\]
	is convex with respect to $\rho$ near $\rho=0$, that is, $g''(0)> 0$. Arguing as above, we obtain
	\begin{equation}\label{eq.der2.convex}	
		g''(\rho)=\iint\limits_{\mathcal{O}_d\times(0,T)}(u-u_d)u''\,dxdt
		+\iint\limits_{\mathcal{O}_d\times(0,T)}|u'|^2\,dxdt
		+\ell^2\iint\limits_{\mathcal{O}\times(0,T)}|v'|^2 dxdt,
	\end{equation}
	where we have denoted  $u'=u'(v',0)=\frac{Du}{D v}\cdot v'$ and 
	$u''=\frac{D^2u}{D v^2}\cdot v'\cdot\widehat{v'}$. Observe that estimates for $u'$ and $u''$ can be obtained
	in the same way of Condition a) by replacing $\psi'$ by $v'$ in \eqref{eq.ks.uprima} and \eqref{eq.ks.udosprima}.
	Thus, it follows that
	\[g''(\rho)\geq \Bigl(\ell^2-C_1-C_1\frac{C_2}{2}\Bigr)\|v'\|^2_{L^2(0,T;L^2(\mathcal{O}))},\quad \forall
	v'\in  L^2(0,T;L^2(\mathcal{O})),\,v'\neq 0.\]
	Therefore, under the assumption that $\ell^2\geq \ell^2_0:=C_1+C_1\displaystyle\frac{C_2}{2}$ we have 
	$g''(\rho)\geq 0$ for all $\rho\in\mathbb{R}$. 
	 Thus, the function $g$ is convex and the strictly convex of $v\longmapsto J_r(v,\psi)$ follows for  $\ell$ 
	 large enough.
	\end{enumerate}
	This complete the proof of Lemma \ref{lema.existence}. 
	\end{proof}
	
	Next, we carry out the proof of the main result of this section, i.e., Theorem \ref{teo1.saddlepoint}. 
	\begin{proof}[Proof of Theorem \ref{teo1.saddlepoint}]
	It is a direct consequence of lemma \ref{lema.existence} and proposition \ref{prop.onsaddlepoint}. Therefore, there
	exists  a  pair 
	$(\bar{v},\bar{\psi})$ on $L^2(0,T;L^2(\mathcal{O}))\times L^2(Q)$ and an associated solution to \eqref{eq.ks} 
	$u=u(h,\bar{\psi},\bar{v})$ satisfying \eqref{saddlepoint.cond}. 
	\end{proof}
	A useful characterization of saddle 	point, in the case where $J_r$ is a differentiable function is the 
	following proposition (see \cite{2001TemamChangbing2} and references therein).
	\begin{proposition}\label{prop.useful}
		In addition to the hypotheses of Proposition	 \ref{prop.onsaddlepoint}, assume
		\begin{enumerate}
			\item [c)] $\forall v\in X,\,\psi\longmapsto J(v,\psi)$ is Gateaux differentiable.
			\item [d)] $\forall \psi\in Y,\,v\longmapsto J(v,\psi)$ is Gateaux differentiable.			
		\end{enumerate}
		Then $(\bar{v},\bar{\psi})\in X\times Y$ is saddle point of $J$ if and only if 
		\begin{equation}\label{ine.charac.saddlepoint}
			\left\{
			\begin{array}{ll}
				\langle \frac{\partial J}{\partial v}(\bar{v},\bar{\psi}), v-\bar{v}\rangle\geq 0,& 
				\quad\forall v\in X,\vspace{0.2cm} \\
				 \langle \frac{\partial J}{\partial\psi}(\bar{v},\bar{\psi}), \psi-\bar{\psi}\rangle\leq 0,& 
				\quad\forall \psi\in Y.
			\end{array}\right.
		\end{equation}
	\end{proposition}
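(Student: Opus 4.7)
The plan is to prove the two implications of the equivalence separately, treating each of the two variational inequalities as the optimality condition for a one‑sided optimization problem obtained by freezing one argument of $J$.

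First, observe that being a saddle point is equivalent to the pair of statements: $\bar v$ minimizes $v \mapsto J(v,\bar\psi)$ on $X$, and $\bar\psi$ maximizes $\psi \mapsto J(\bar v,\psi)$ on $Y$. Indeed, the definition $J(\bar v,\psi)\le J(\bar v,\bar\psi)\le J(v,\bar\psi)$ splits cleanly into these two extremal properties. Under hypotheses (b) and (d), the map $v\mapsto J(v,\bar\psi)$ is convex and Gateaux differentiable on the convex set $X$; under (a) and (c) the map $\psi\mapsto J(\bar v,\psi)$ is concave and Gateaux differentiable on $Y$. So the whole proposition reduces to the classical first‑order characterization of a minimum of a convex differentiable functional (and a maximum of a concave one) over a convex set.

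For the forward direction (saddle point $\Rightarrow$ variational inequalities), I would fix an arbitrary $v\in X$ and, using convexity of $X$, consider the admissible path $v_t:=\bar v + t(v-\bar v)\in X$ for $t\in(0,1]$. From $J(\bar v,\bar\psi)\le J(v_t,\bar\psi)$ one gets
\[
\frac{J(\bar v + t(v-\bar v),\bar\psi)-J(\bar v,\bar\psi)}{t}\ge 0,
\]
and sending $t\to 0^+$ and invoking Gateaux differentiability in $v$ yields $\langle \tfrac{\partial J}{\partial v}(\bar v,\bar\psi),v-\bar v\rangle\ge 0$. The reasoning for $\psi$ is symmetric, replacing ``min'' by ``max'' and the inequality is reversed, producing $\langle \tfrac{\partial J}{\partial \psi}(\bar v,\bar\psi),\psi-\bar\psi\rangle\le 0$.

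For the reverse direction (variational inequalities $\Rightarrow$ saddle point), I would invoke the standard first‑order inequality for convex Gateaux differentiable functionals: if $\Phi:X\to\mathbb R$ is convex and Gateaux differentiable, then $\Phi(v)\ge \Phi(\bar v)+\langle \Phi'(\bar v),v-\bar v\rangle$ for every $v\in X$. Applying this to $\Phi(v)=J(v,\bar\psi)$ and combining with the first variational inequality gives $J(v,\bar\psi)\ge J(\bar v,\bar\psi)$ for all $v\in X$. The symmetric statement for concave functionals (with reversed inequality) applied to $\Psi(\psi)=J(\bar v,\psi)$, combined with the second variational inequality, gives $J(\bar v,\psi)\le J(\bar v,\bar\psi)$ for all $\psi\in Y$. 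Together these are exactly the saddle point inequalities.

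The only real subtlety, and thus the ``main obstacle'', is a careful derivation of the first‑order inequality $\Phi(v)\ge \Phi(\bar v)+\langle \Phi'(\bar v),v-\bar v\rangle$ in the Gateaux (rather than Fr\'echet) setting on a convex set: this follows by writing $\varphi(t):=\Phi(\bar v+t(v-\bar v))$ for $t\in[0,1]$, noting $\varphi$ is convex with $\varphi'(0)=\langle \Phi'(\bar v),v-\bar v\rangle$, and using the standard inequality $\varphi(1)\ge \varphi(0)+\varphi'(0)$ for convex scalar functions. Everything else is formal bookkeeping, and no topology on $X$ or $Y$ beyond what is needed for the Gateaux derivative enters the argument.
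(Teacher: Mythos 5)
Your proof is correct and is exactly the standard first-order characterization argument: split the saddle-point definition into the two one-sided extremal properties, differentiate along the convex segment $\bar v+t(v-\bar v)$ for the forward implication, and use the subgradient-type inequality $\Phi(v)\ge\Phi(\bar v)+\langle\Phi'(\bar v),v-\bar v\rangle$ for convex (resp.\ concave, reversed) Gateaux-differentiable maps for the converse. The paper does not prove this proposition at all — it cites it from the literature (Ekeland--Temam and Hu--Temam) — so there is nothing to compare against beyond noting that your argument is the one those references give.
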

	Observe that proposition \ref{prop.useful} is also applicable to our case, so we have the characterization 
	\eqref{ine.charac.saddlepoint} for the saddle point in theorem \ref{teo1.saddlepoint}. It will be studied in the 
	next subsection.

\subsection{\normalsize{Characterization of the robust control}}	
	In this subsection we will identify the gradient of the cost functional $J_r$ (see \eqref{eq.functional.ks})
	with respect to the control $v$ and the disturbance $\psi$, which turn out to be useful for the numerical
	framework for determining the robust control solution, and whose analysis is given later on. As proved in the 
	above subsection, the existence of a saddle point $(\bar v,\bar\psi)$ of the functional $J_r$ implies 
	\eqref{ine.charac.saddlepoint}. As consequence,  for the functional $J_r$ follows that  
	for any $\psi\in L^2(Q)$ and $v\in L^2(0,T;L^2(\mathcal{O}))$  
	\[\frac{D J_r}{D\psi}(\bar{v},\bar{\psi})=0,\quad 
	\frac{D J_r}{D v}(\bar{v},\bar{\psi})=0.\]
	Following the arguments by \cite{2000-bewleytemamziane} and \cite{2001TemamChangbing2}, we can deduce 
	 \[\frac{D J_r}{D v}(\bar{v},\bar{\psi})= (\ell^2\bar{v}-z)1_{\mathcal{O}}\quad
	\mbox{and}\quad \frac{D J_r}{D \psi}(\bar{v},\bar{\psi})=-\gamma^2\bar{\psi}-z,\]
	where $z$ is the solution to the problem 
	\begin{equation*}
		\begin{cases}
		-z_{t}+z_{xxxx}+z_{xx}-uz_{x}=(u-u_{d})1_{\mathcal{O}_{d}} & \text{in }Q,\\
		z(0,t)=z(1,t)=z_x(0,t)=z_x(1,t)=0 & \text{on }(0,T),\\
		z(\cdot,T)=0& \text{in }(0,1).
		\end{cases}
	\end{equation*}	  

	In summary, the robust internal control problem is characterized 
	by the following Lemma. 
	\begin{lemma}\label{lema.ks.characterization}
		Let $h\in L^2(0,T;L^2(\omega))$ and $u_0\in H_0^2(0,L)$ be given. 
		Suppose that $(\bar{v},\bar{\psi})$ is the solution to the  robust control 
		problem established in Theorem \ref{teo1.saddlepoint}. Then
		\[\bar\psi=\frac{1}{\gamma^2}z\quad\mbox{and}\quad\bar{v}=-\frac{1}{\ell^2}z1_{\mathcal{O}},\]
		where $z$ is the second component of $(u,z)$ solution to the 
		following coupled system
		\begin{equation}\label{eq.ks.characterization}
		\begin{cases}
		u_{t}+u_{xxxx}+u_{xx}+uu_x=h1_{\omega}+ (-\ell^{-2}1_{\mathcal{O}}+\gamma^{-2})z & \text{in }Q,\\
		-z_{t}+z_{xxxx}+z_{xx}-uz_{x}=(u-u_{d})1_{\mathcal{O}_{d}} & \text{in }Q,\\
		u(0,t)=u(1,t)=z(0,t)=z(1,t)=0 & \text{on }(0,T),\\
		u_{x}(0,t)=u_{x}(1,t)=z_{x}(0,t)=z_{x}(1,t)=0 & \text{on }(0,T),\\
		u(\cdot,0)=u_{0}(\cdot),\,\, z(\cdot,T)=0& \text{in }(0,1).
		\end{cases}
		\end{equation}	
	\end{lemma}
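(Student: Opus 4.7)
My plan is to exploit the optimality conditions from Proposition \ref{prop.useful}, which, because the admissible sets $X=L^2(0,T;L^2(\mathcal{O}))$ and $Y=L^2(Q)$ are entire Hilbert spaces (not constrained), reduce the variational inequalities \eqref{ine.charac.saddlepoint} to the two equalities
\[
\frac{D J_r}{D v}(\bar{v},\bar{\psi})\cdot v' = 0 \quad\forall v'\in L^2(0,T;L^2(\mathcal{O})),\qquad
\frac{D J_r}{D \psi}(\bar{v},\bar{\psi})\cdot \psi' = 0 \quad\forall \psi'\in L^2(Q).
\]
Introducing directional derivatives of the state $u$ as in the proof of Lemma \ref{lema.existence}, these expand as
\[
\iint_{\mathcal{O}_d\times(0,T)}(u-u_d)\,u'\,dxdt + \ell^2\iint_{\mathcal{O}\times(0,T)}\bar v\,v'\,dxdt = 0,
\]
and the analogous identity with $-\gamma^2\bar\psi\,\psi'$ for the $\psi$-direction, where $u'$ is the corresponding linearized state that satisfies \eqref{eq.ks.uprima} with right-hand side $v'1_{\mathcal O}$ or $\psi'$.

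Next I would introduce the adjoint state $z$ as the solution of the backward problem
\[
\begin{cases}
-z_t+z_{xxxx}+z_{xx}-uz_x=(u-u_d)1_{\mathcal{O}_d} & \text{in }Q,\\
z(0,t)=z(1,t)=z_x(0,t)=z_x(1,t)=0 & \text{on }(0,T),\\
z(\cdot,T)=0 & \text{in }(0,1),
\end{cases}
\]
which has a unique solution in $C([0,T];L^2(0,1))\cap L^2(0,T;H^2(0,1))$ by a dual version of Lemma \ref{apendix.lema.strong.linear}. Multiplying this adjoint equation by $u'$, integrating over $Q$, and integrating by parts in time (using $u'(\cdot,0)=0$ and $z(\cdot,T)=0$) and in space (using the homogeneous boundary conditions) converts all fourth-order and second-order derivatives. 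For the transport term I would use the identity $\int_0^1 (uu'_x+u_xu')z\,dx=-\int_0^1 u'(uz_x)\,dx$, which precisely matches the $-uz_x$ term in the adjoint and justifies its presence. The outcome is the duality relation
\[
\iint_{\mathcal{O}_d\times(0,T)}(u-u_d)\,u'\,dxdt = \iint_{Q}z\,f\,dxdt,
\]
where $f$ is the right-hand side driving the linearized equation, i.e.\ $f=v'1_{\mathcal O}$ or $f=\psi'$.

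Inserting this duality into the two optimality equalities yields
\[
\iint_{\mathcal{O}\times(0,T)}(\ell^2\bar v + z)\,v'\,dxdt = 0,\qquad
\iint_{Q}(z-\gamma^2\bar\psi)\,\psi'\,dxdt = 0,
\]
for every admissible $v'$ and $\psi'$. By a standard density/fundamental-lemma argument this forces
\[
\bar v = -\frac{1}{\ell^2}\,z\,1_{\mathcal{O}},\qquad \bar\psi = \frac{1}{\gamma^2}\,z,
\]
which are exactly the formulas claimed in Lemma \ref{lema.ks.characterization}. Substituting these expressions back into the state equation \eqref{eq.ks} (with the fixed leader $h$) produces the coupled forward-backward system \eqref{eq.ks.characterization}, completing the proof.

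The main obstacle I expect is ensuring enough regularity to justify the integration by parts that links the adjoint with the linearized state: one needs $u\in\mathcal Z$ so that the transport term $-uz_x$ makes sense and the duality pairing used in Lemma \ref{lema.existence} (through the weak-weak formulation of Lemma \ref{apendix.lemma.weak}) is well-posed. Once the regularity of $u$ and of the linearized problem is harvested from the estimates already established in the existence proof, everything else is a routine application of the adjoint method.
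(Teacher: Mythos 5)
Your proposal is correct and takes essentially the same route as the paper: the paper likewise reduces the saddle-point characterization to the vanishing of the two Gateaux derivatives via Proposition \ref{prop.useful} and identifies them through the backward adjoint state $z$, merely delegating the duality computation to the cited references rather than writing it out as you do. As a bonus, your explicit integration by parts gives $\frac{DJ_r}{Dv}(\bar v,\bar\psi)=(\ell^2\bar v+z)1_{\mathcal{O}}$ and $\frac{DJ_r}{D\psi}(\bar v,\bar\psi)=-\gamma^2\bar\psi+z$, which are consistent with the formulas stated in the lemma and expose a sign typo in the paper's intermediate display of these gradients.
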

	
	\subsection{\normalsize{Numerical method}}\label{section.numerical.method}	
	Finite element solutions for the KS equation are not common because the primal variational formulation of fourth--order
	operators requires finite element basis functions which are piecewise smooth and globally at least $C^1$--continuous.
	Although the KS equation has been studied numerically by several schemes such as local discontinuous Galerkin methods 
	\cite{2006Xu}, finite elements \cite{2019Doss,1994Akrivis},
	variable mesh finite difference methods \cite{2017Mohanty}, B--spline finite difference--collocation method \cite{2012Lakestani},
	the inverse scattering method \cite{bookDrazin}, a higher--order finite element approach \cite{2012Andersetal}, finite difference 
	\cite{1992Akrivis,2016singhnote,michelson1977nonlinear,2019Mohanty}, 
	spectral method \cite{2013Barker}. In this paper, a new numeric
	solution for the KS equation is obtained by introducing a $\theta$--scheme/Adams--Bashforth algorithm for the time discretization
	and $\mathbb{P}_1$--type Lagrange polynomials for the spatial approximation. This setting simplifies the treatment of the
	nonlinearity in a semi--implicit form and also decompose the fourth--order equation to a coupled system of two second--order
	equations, which allows to use $C^0$--basis functions instead of $C^0$--basis functions.

	In this subsection, we develop a finite element method for the solution of the
	nonlinear robust control problem associated to the KS equation \eqref{eq.ks}. As mentioned, this problem is equivalent 
	to find a saddle point for the functional $J_r$, which is characterized by the coupled system 
	\eqref{eq.ks.characterization}.
	In order to obtain better illustrations of our results, we consider a symmetric domain $(-L,L)$ ($L>0$) instead of $(0,1)$.
	Let us first consider the KS equation
	\begin{equation}\label{eq.ks.numeric1}
	\begin{cases}
		u_{t}+u_{xxxx}+u_{xx}+uu_{x}=f & \text{in }(-L,L)\times(0,T),\\
		u(-L,t)=u(L,t)=u_{x}(-L,t)=u_{x}(L,t)=0 & \text{in }(0,T),\\
		u(\cdot,0)=u_{0}(\cdot)& \text{in }(-L,L),
	\end{cases}
	\end{equation}
	By defining a new variable $w$ as $w=u_{xx}$, the above problem may be considered in a coupled manner as:
	\begin{eqnarray}
		&&u_{xx}=w,\label{eq.ks.numeric2} \\
		&&u_{t}+w_{xx}+w+uu_{x}=f,\label{eq.ks.numeric3}
	\end{eqnarray}
	and subject to the following initial and boundary conditions:
	\begin{eqnarray}
		&&u(x,0)=u_{0}(x),\quad  -L\leq x\leq L, \label{ic1}\\
		&&u(-L,t)=u(L,t)=u_{x}(-L,t)=u_{x}(L,t)=0,\quad t>0,\label{ic2}\\
		&&u_{xx}(-L,t)=w(-L,t)=w_1(t),\quad u_{xx}(L,t)=w(L,t)=w_2(t),\quad t>0,\label{ic3}
	\end{eqnarray}
	where $u_0,w_1$ and $w_2$ are given smoothness functions. Thanks to the initial condition \eqref{ic1}, the values 
	of all successive partial derivatives of $u$ can be determined at $t=0$. Thus, the value of $w$ is also known at $t=0$. 
	
	To obtain the numerical solution of the problem \eqref{eq.ks.numeric2}--\eqref{eq.ks.numeric3} subject to 
	\eqref{ic1}--\eqref{ic3}, the time domain is split into $N_T$ intervals, i.e., $0<t_1<t_2<\cdots t_{N_T}=T$, where the 
	steps are of equal length $\Delta t$. Besides, we will use  $\mathbb{P}_1$--type finite elements for the spatial
	discretization (see for instance \cite[Section 6.2]{book.allaire1}) and a $\theta$--scheme/Adams--Bashforth 
	(TAB2) for the  time advancing. More precisely,
	by letting $u^n(x)=u(x,n\Delta t)$ and $w^n(x)=w(x,n\Delta t)$ for some small $\Delta t$. TAB2 approximations
	to \eqref{eq.ks.numeric2}--\eqref{eq.ks.numeric3} are given by 
	\begin{eqnarray}
		&&\frac{u^{n+1}-u^n}{\Delta t}+\theta \mathcal{A}(w^{n+1})+(1-\theta)\mathcal{A}(w^{n})-\frac{3}{2}N(u^n)+\frac{1}{2}\mathcal{N}(u^{n-1})
		=f^{n+1},\label{TAB2.eq1}\\
		&& w^{n+1}-u_{xx}^{n+1}=0,\label{TAB2.eq11}
	\end{eqnarray}
	where $\mathcal{A}w=w_{xx}+w$ corresponds to the linear part and $\mathcal{N}(u)=uu_x$ is the nonlinear term.  
	For the spatial discretization, we consider the discrete space 
	\[V_h=\{u\in C([-L,L]):  u|_{[x_j,x_{j+1}]}\in \mathbb{P}_1\,\,\mbox{for all}\,\,0\leq j\leq N \}\]
	and its subspace
	\[V_{0h}=\{u\in V_h: u(-L)=u(L)=0\}.\]
	Thus, after integrating by parts, the 
	discrete variational problem of the internal approximation becomes: to find 
	$(u^{n+1}_h,w^{n+1}_h)\in V_{0h}\times  V_{0h}$ such that
	\begin{eqnarray}\label{TAB2.eq2}
		& (u^{n+1}_h,u_1)+\Delta t\theta((w^{n+1}_h,u_1)-(\partial_xw_h^{n+1}, \partial_x u_1))=
		F(u^n_h,u^{n-1}_h,u_1),\quad  \forall u_1\in V_{0h},\label{TAB2.eq1}\\
		& (w^{n+1}_h,u_2)+(\partial_{x} u_{h}^{n+1},\partial_{x}u_2)=0, \hspace{5.4cm}  \forall u_2\in V_{0h},
	\end{eqnarray} 
	where
	\begin{equation}\label{TAB2.eq3}
	\begin{array}{lll}
		F(u^n_h,u^{n-1}_h,u_1)=&\Delta t(\theta-1)((w^{n}_h,u_1)-(\partial_{x}w_h^{n}, \partial_x u_1))+\frac{3}{2}\Delta t(\mathcal{N}(u^n_h),u_1)
		-\frac{1}{2}\Delta t(\mathcal{N}(u^{n-1}_h,u_1)\\
	  	&+\Delta t(f^{n+1},u_1)
	\end{array}
	\end{equation}
	and $(\cdot,\cdot)$ denotes the inner product of $L^2((-L,L))$.

	First, we test numerically the accuracy of our method for the resolution of the nonlinear KS equation 
	\eqref{eq.ks.numeric1} by taking the following function
	\[u(x,t)=(t+1)\sin^2\Bigl(\frac{\pi x}{30}\Bigr),\quad x\in (-30,30),\]
	 as the solution of \eqref{eq.ks.numeric1}, where the right--hand side term is 
	\[f(x,t)=-\frac{\pi^2(-225+\pi^2)(1+t)\cos(\frac{\pi x}{15})}{101250}+\frac{1}{30}
	\Bigl(30+\pi(1+t)^2\sin^2\Bigl(\frac{\pi x}{15}\Bigr)\Bigr)\sin^2\Bigl(\frac{\pi x}{30}\Bigr).\]
	
	\begin{figure}[ht]
	\begin{center}
		\includegraphics[scale=0.22]{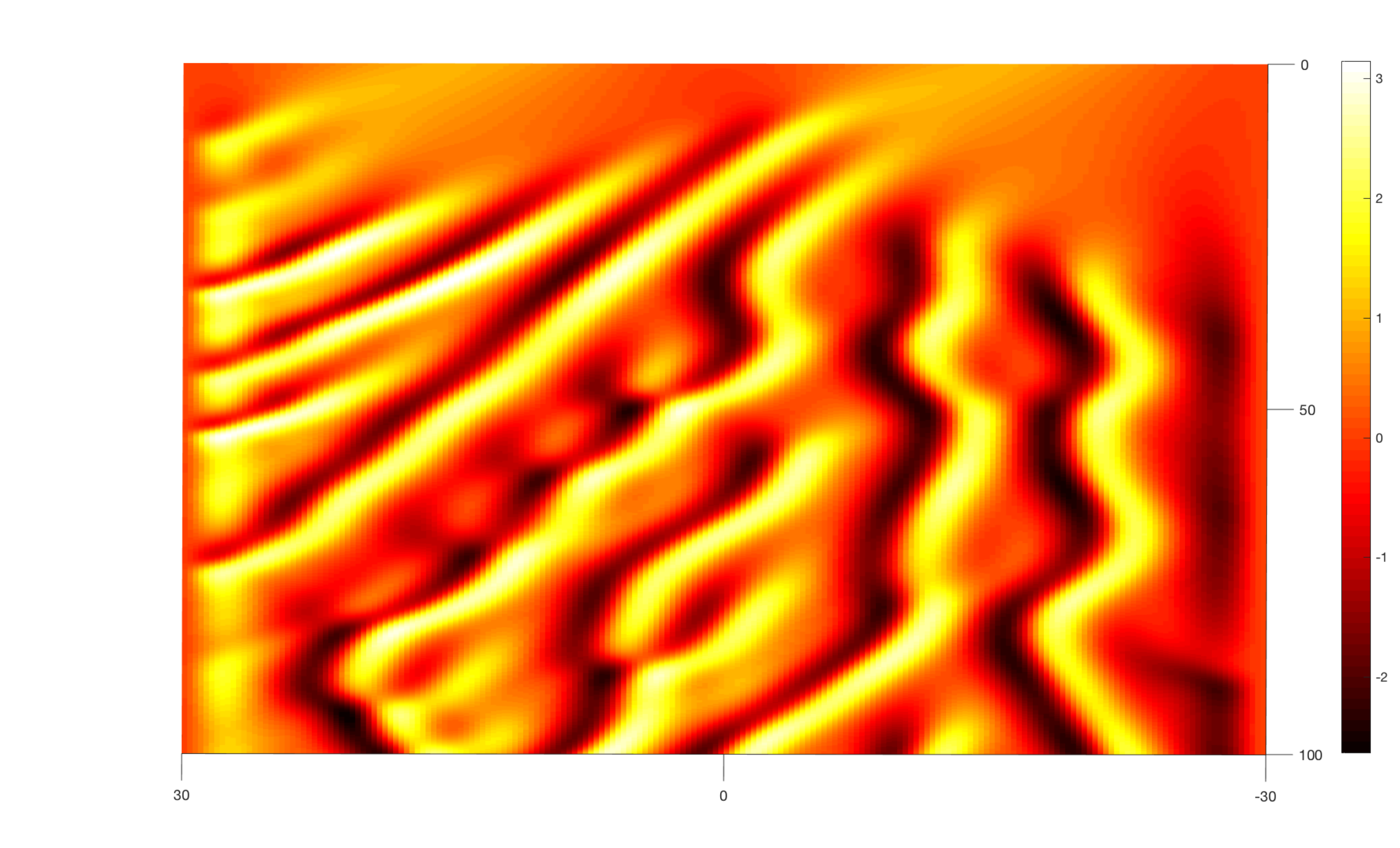} 
		\hspace{0.01cm}
		\includegraphics[scale=0.22]{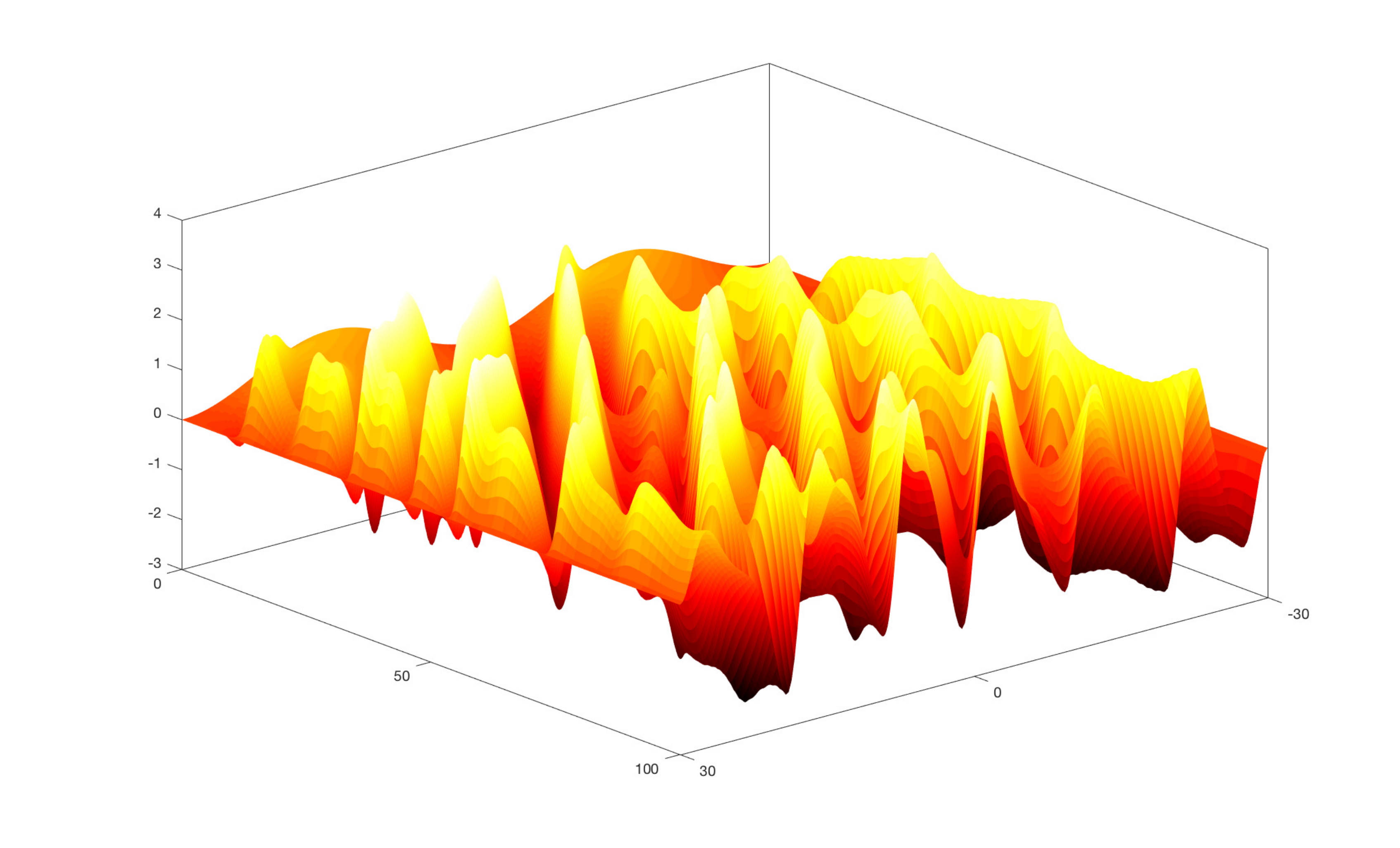}   		
	\end{center}
	\par
	\caption{The graph of numerical solution for $N=200$ (spatial nodes) with temporal step $\Delta t=10^{-3}$ and $T=100$. 
	The numerical approximation by the 
	$\theta$/Adams--Bashforth method with $\theta=\frac{3}{4}$ and 
	Lagrange finite elements of order 1.}\label{fig.solutions.ks}
	\end{figure}
	To see the order of the accuracy between our numerical approximation and the exact solution given above, 
	we let $\Delta t$ decrease from $10^{-1}$ to $10^{-6}$ for large $N=200$, and let 
	$N$ increase from $25$ to $100$ for small $\Delta t=10^{-6}$. The results are given in Table \ref{table.errorKS1}.
	The mathematical study of stability, convergence and accuracy for the above method will be developed in a forthcoming 
	paper.  
	\begin{table}[ht!]
	\begin{centering}
		\scalebox{1.00}{\begin{tabular}{|c|c|c|c|}
	\hline 
	$\Delta t$ &   $N$ & $ L^{\infty}-\mbox{error}$ &  $ L^2-\mbox{error}$
	\tabularnewline
	\hline 
	 $1e{-01}$ &   & $1.32e-02$ & $5.54e-06$

	\tabularnewline
	%\hline 
	 $1e{-02}$ &    &$ 1.13e-03$ & $ 3.46e-08$

	\tabularnewline
	%\hline 
	$1e{-03}$ &   200 & $8.79e-05$ & $2.71e-10$
	\tabularnewline
	%\hline 
	$1e{-04}$ &    &$ 5.55e-05$ & $ 5.66e-11$

	\tabularnewline
	$1e{-05}$ &   &$ 5.46e-05$ & $ 6.58e-11$

	\tabularnewline

	%\hline 
	$1e{-06}$ &    &$ 5.45e-05$ & $ 6.70e-11$

	\tabularnewline
	\hline \hline
	         &   25 &$ 3.31e-03$ & $ 1.86e-06$
	\tabularnewline
	$1e{-06}$   & 50 &$ 8.83e-04$ & $ 6.58e-08$
    \tabularnewline
             &      100 &$ 2.19e-04$ & $ 2.13e-09$
    \tabularnewline
    \hline 
	\end{tabular}}
	\par\end{centering}
	\vskip 0.5cm
	\caption{Errors in $L^\infty$ and $L^2$ norms at $T=1s$ using the  $\theta$--scheme/Adams--Bashforth method with 
		$\mathbb{P}_1$--type Lagrange polynomials \eqref{TAB2.eq2}--\eqref{TAB2.eq3} for the KS equation 
		\eqref{eq.ks.numeric1}.}\label{table.errorKS1}  
	\end{table}
	
	Now, in order to approximate the solution of the robust control problem, Problem \ref{p1.saddlepoint}, we use as starting
	point the above discretization schemes as well as the characterization given by Lemma \ref{lema.ks.characterization}.
	Secondly, based in the numerical algorithm 
	proposed in \cite{2000-bewleytemamziane,2002iterative-tachim} for the Navier--Stokes equations, we propose a similar 
	algorithm for the KS system. Our main novelty relies in the form of constructing the ascent and descent directions, 
	and whose basis is the preconditioned nonlinear gradient conjugate method \cite{1994Quarteroni}. The algorithm
	reads as follows. 
	\begin{algorithm}[ht]\label{algorithm1}
	\SetAlgoLined
	\KwIn {Initialize $k=0$ and  $(v^0,\psi^0)=(0,0)$ on $t\in [0,T]$, where $k$ is the iteration index and 
		$(v^k,\psi^k)$ is the numerical approximation of the control and the disturbance during the $k$th iteration of 
		the algorithm.}
		Determine the state $u^{k+1}$ on $[0,T]$ from the KS equation with initial datum $u^0$ and the forcing 
		$(v^k1_{\mathcal{O}},\psi^k)$, where $\mathcal{O}\subset (-L,L)$.
			
		Determine the adjoint state $z^{k+1}$ on $[0,T]$ from the adjoint equation based on the state $u^{k+1}$.
		
		Determine the local expression of the gradients
		\[\frac{DJ_r}{Dv}(v^k,\psi^k)\quad\mbox{and}\quad \frac{DJ_r}{D\psi}(v^k,\psi^k).\]	
	
		Determine the updated disturbance $\psi^{k+1}$ using
		\[\psi^{k+1}=\psi^k+\alpha^k\frac{DJ_r}{D\psi}(v^k,\psi^k),\]	
		where $\alpha^k\in (0,1)$ is determined by an iterative procedure described in Remark \ref{obs.directions.alpha.beta}. 
		
		Determine the updated control $v^{k+1}$ using 
		\[v^{k+1}=v^k-\beta^k\frac{DJ_r}{Dv}(v^k,\psi^k),\]	
		where $\beta^k\in (0,1)$ is determined by an iterative procedure described in Remark \ref{obs.directions.alpha.beta}. 
		
		Increment index $k=k+1$. Repeat from step 3 until convergence.
				
 		\caption{Robust control algorithm}
	\end{algorithm}
		
 	\begin{Obs}\label{obs.directions.alpha.beta}
 		To find appropriate $\alpha^k$ and $\beta^k$ directions, we should be able to minimize 
		the following nonlinear functions: 
		\[f_{k}(\alpha)=J_r\Bigl(v^{k},\psi^{k}+\alpha\frac{DJ_r}{D\psi}(v^{k},\psi^{k})\Bigr)\quad\mbox{and}\quad 
		g_{k}(\beta)=J_r\Bigl(v^{k},v^{k}+\beta\frac{DJ_r}{Dv}(v^{k},\psi^{k})\Bigr).\] 
	Thus, for $k\in \mathbb{N}$, we have 
	\begin{equation*}
	\begin{aligned}
		f'_k(\alpha)=&\lim\limits_{\varepsilon\to 0}\frac{f_k(\alpha+\varepsilon)-f_k(\alpha)}{\varepsilon}\\
	  	=&\lim\limits_{\varepsilon\to 0} \frac{J_r(v^{k},\psi^{k}+\alpha\frac{DJ_r}{D\psi}(v^{k},\psi^{k})
		+\varepsilon\frac{DJ_r}{D\psi}(v^{k},\psi^{k}))-J_r(v^{k},\psi^{k}+\alpha\frac{DJ_r}{D\psi}(v^{k},\psi^{k}))}{\varepsilon}\\
		=&\Bigl\langle\frac{DJ_r}{D\psi}(v^{k},\psi^{k}+\alpha\frac{DJ_r}{D\psi}(v^{k},\psi^{k})),\frac{DJ_r}{D\psi}(v^{k},\psi^{k})
		\Bigl\rangle_{L^2(Q)}.
	\end{aligned}
	\end{equation*}
	Observe that if $\alpha<<1$, then 
	\[|f'(\alpha)|\simeq\Bigl\|\frac{DJ_r}{D\psi}(v^{k},\psi^{k})\Bigr\|_{L^2(Q)}.\]
	If $\|\frac{DJ_r}{D\psi}(v^{k},\psi^{k})\|_{L^2(Q)}>>1$, then we take
	\[\alpha^{k+1}=\alpha^{k}-\frac{f'_k(\alpha^{k})}{\|\frac{DJ_r}{D\psi}(v^{k},\psi^{k})\|_{L^2(Q)}},\]
	otherwise (that is, if $\|\frac{DJ_r}{D\psi}(v^{k},\psi^{k})\|_{L^2(Q)}\leq 1$),  then we take
	\[\alpha^{k+1}=\alpha^{k}-f'_k(\alpha^{k}).\]
	In other words, we have consider a preconditioner 
	\footnote{\url{https://doc.freefem.org/documentation/algorithms-and-optimization.html}} 
	$P_{k}(x)$ defined by (see \cite{1998Lucquin}):
	\[P_{k}(x):=
	\begin{cases}
		\displaystyle\frac{x}{\|\frac{DJ_r}{D\psi}(v^{k},\psi^{k})\|_{L^2(Q)}}, & if\, 
		\quad \Bigl\|\frac{DJ_r}{D\psi}(v^{k},\psi^{k})\Bigr\|_{L^2(Q)}>1,\\
		x, & if\, \quad\Bigl|\frac{DJ_r}{D\psi}(v^{k},\psi^{k})\Bigr\|_{L^2(Q)}\leq1.
	\end{cases}
	\]
	An analogous procedure is realized for obtaining $\beta^k$. 
	
	\end{Obs}
	The criterion for the termination of the algorithm is given by 
	\[\Bigl\|\frac{DJ_r}{D\psi}(v^{k},\psi^{k})\Bigr\|_{L^2(Q)}+\Bigl\|\frac{DJ_r}{Dv^k}(v^{k},\psi^{k})\Bigr\|_{L^2(Q)}<tol,\]
	which is analogous to the presented in \cite{2002iterative-tachim}.
	
	Figures \ref{fig.control1disturbance1.ks}--\ref{fig.state3target3.ks} display 
	numerical results on the robust internal control problem by considering different 
	parameters $\ell$ and $\gamma$ as well as for some functions $u_d$. In all experiments,  
	$tol=10^{-6}$, the initial datum is
	$u^0(x)=\sin^2\Bigl(\frac{\pi x}{30}\Bigr)$,\,$x\in(-30,30)$. 
	\begin{figure}[h!]
	\begin{center}
		\includegraphics[scale=0.21]{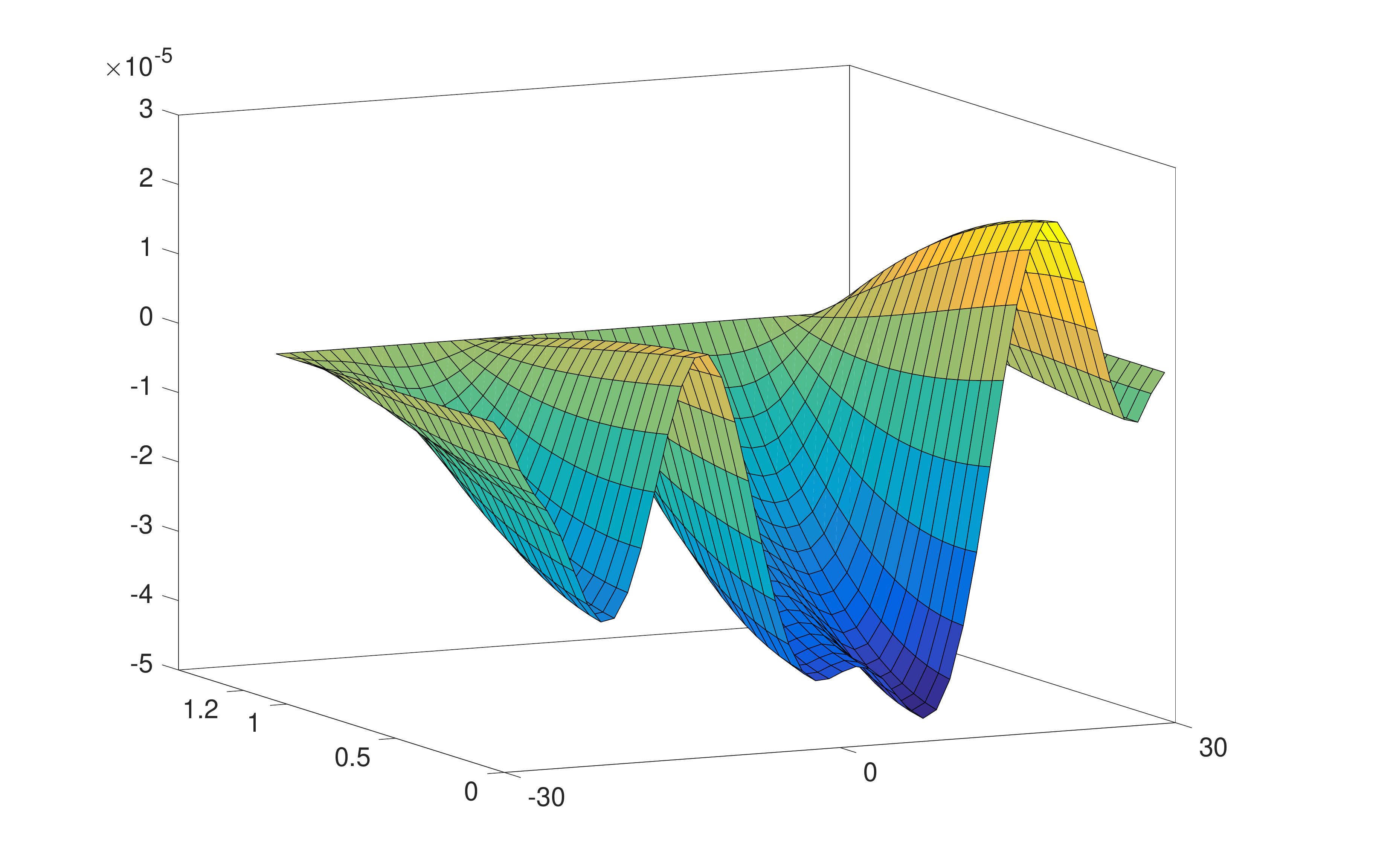}   		
		\includegraphics[scale=0.21]{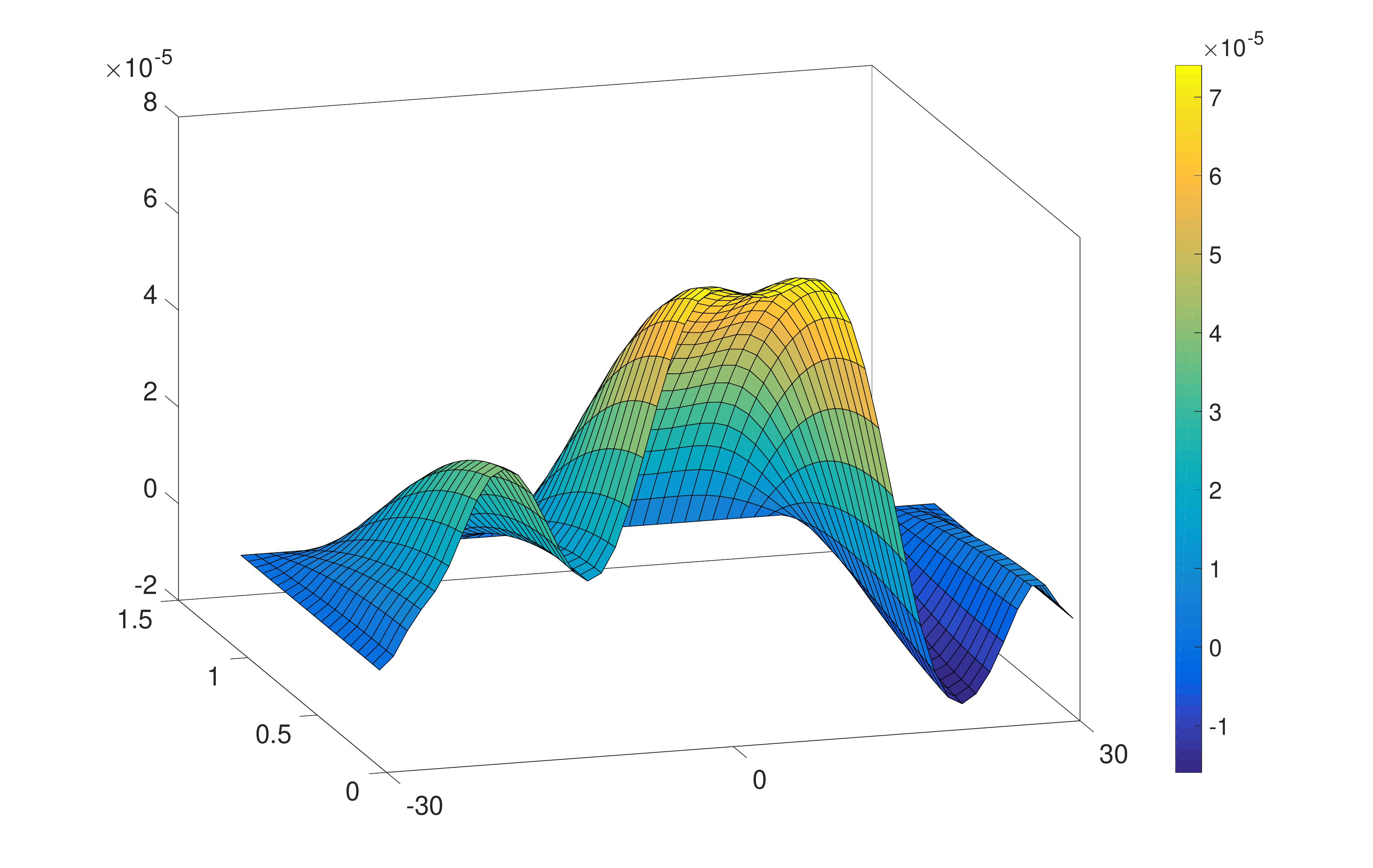} 	
	\end{center}
	\par
	\caption{Disturbance signal $\psi$ (left) and control function $v$ (right) on the spatial domain $(-30,30)$. 
	$T=1s,$\, $N=50, \Delta t=2\times10^{-2}$ and $ 
	\ell=40, \gamma=40$.}\label{fig.control1disturbance1.ks}
	\end{figure}	
	
	\begin{figure}[ht!]
	\begin{center}
		\includegraphics[scale=0.21]{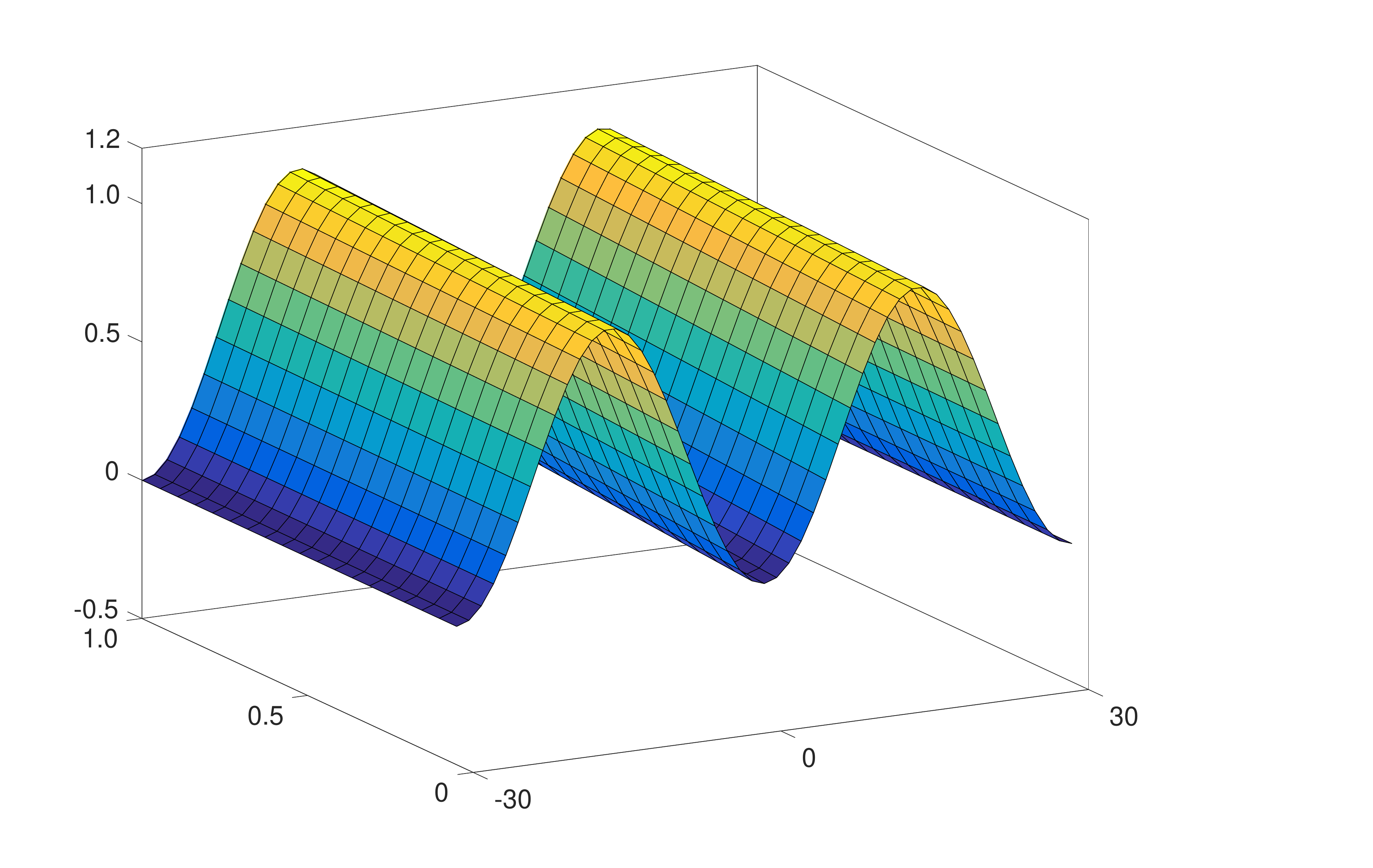}   		
		\includegraphics[scale=0.21]{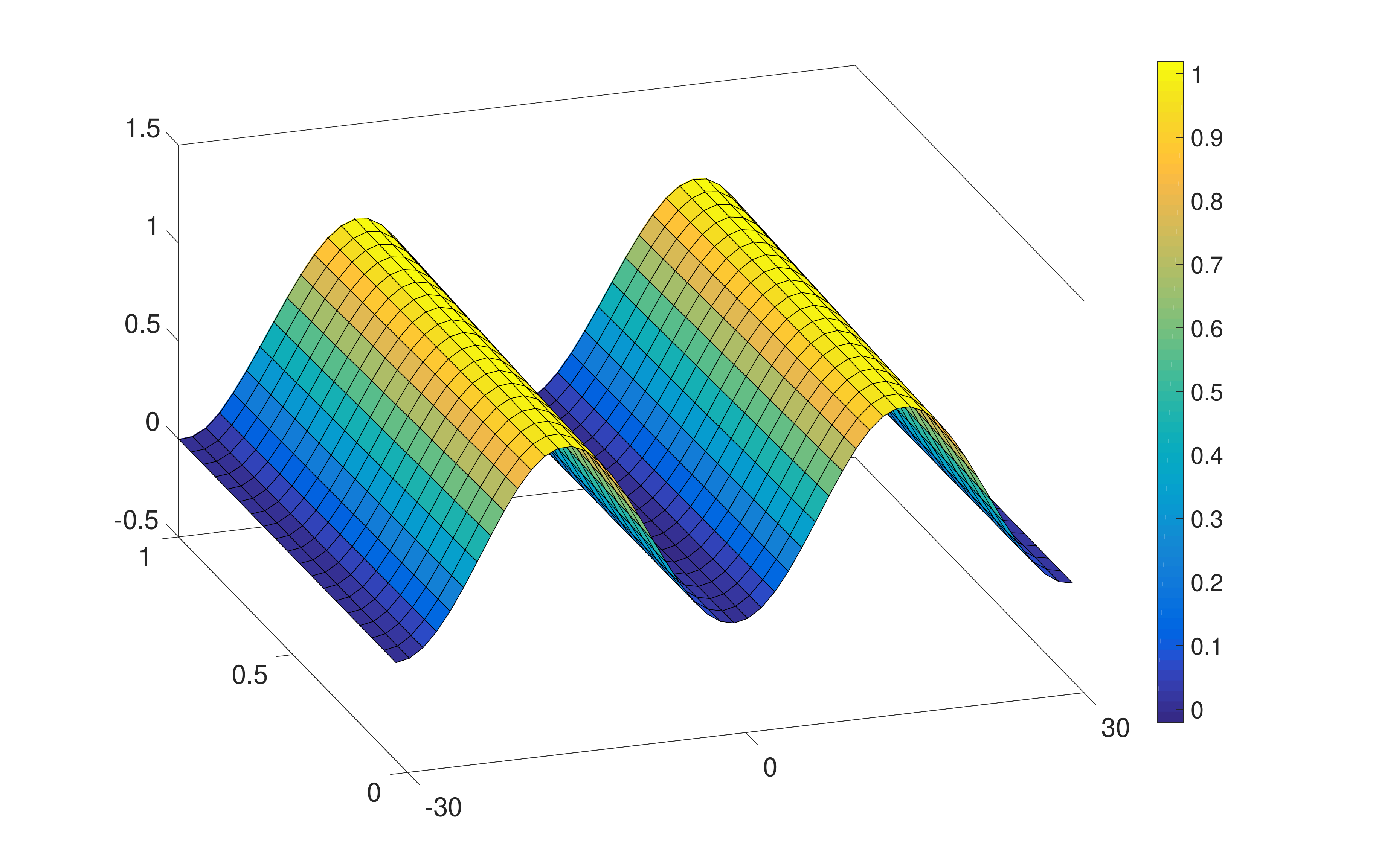}   		
	\end{center}
	\par
	\caption{Function $u_d(x,t)=\sin^2(\frac{\pi x}{30})+10^{-1}t(\cos(\frac{\pi x}{30})+1)$ (left) and state function $u(x,t)$ (right). 	
	$T=1s,$\, $N=50, \Delta t=2\times10^{-2}$ and $\ell=40, \gamma=40$.}\label{fig.state1target1.ks}
	\end{figure}	

	\begin{figure}[ht!]
	\begin{center}
		\includegraphics[scale=0.19]{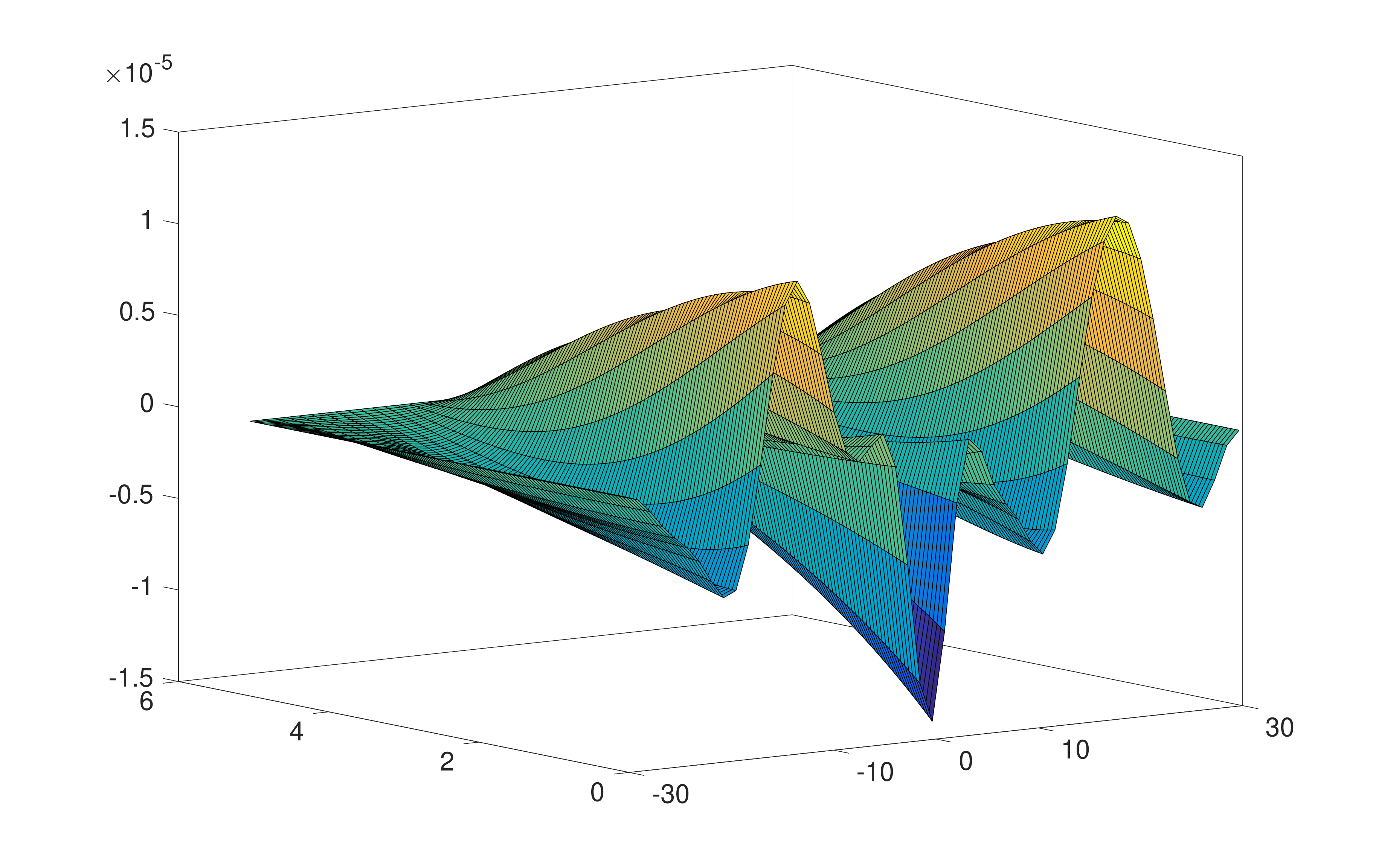} 
		\includegraphics[scale=0.19]{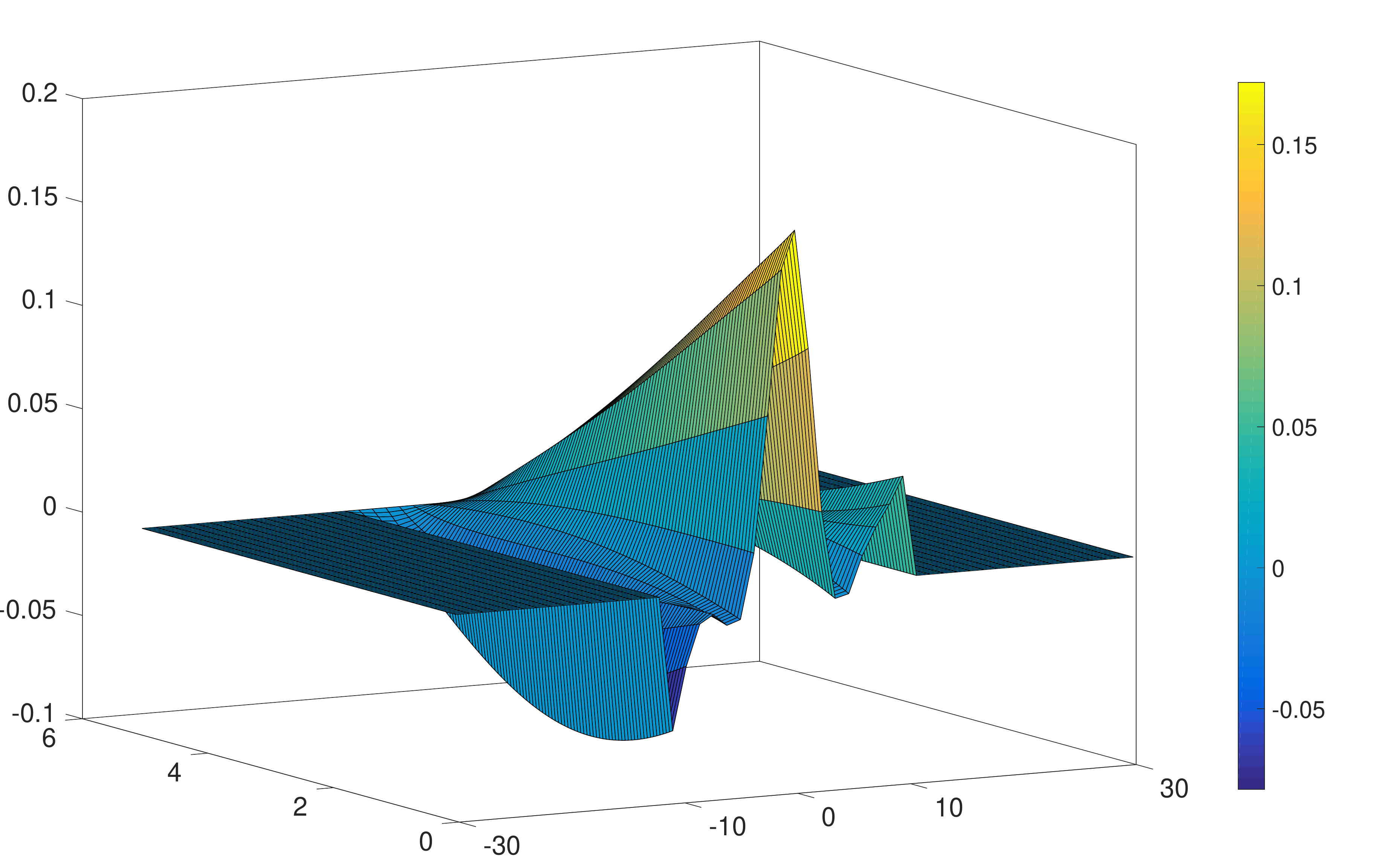}   		
	\end{center}
	\par
	\caption{Disturbance signal $\psi$  (left) in the interval $(-30,30)$ and control function $v$ (right) with support in $\mathcal{O}=(-10,10)$. 
	$T=5s, N=50, \Delta t=2\times 10^{-2}$ and 
	 $\ell=4, \gamma=400$.}\label{fig.control2disturbance2.ks}
	\end{figure}
	
	\begin{figure}[h!]
	\begin{center}
		\includegraphics[scale=0.21]{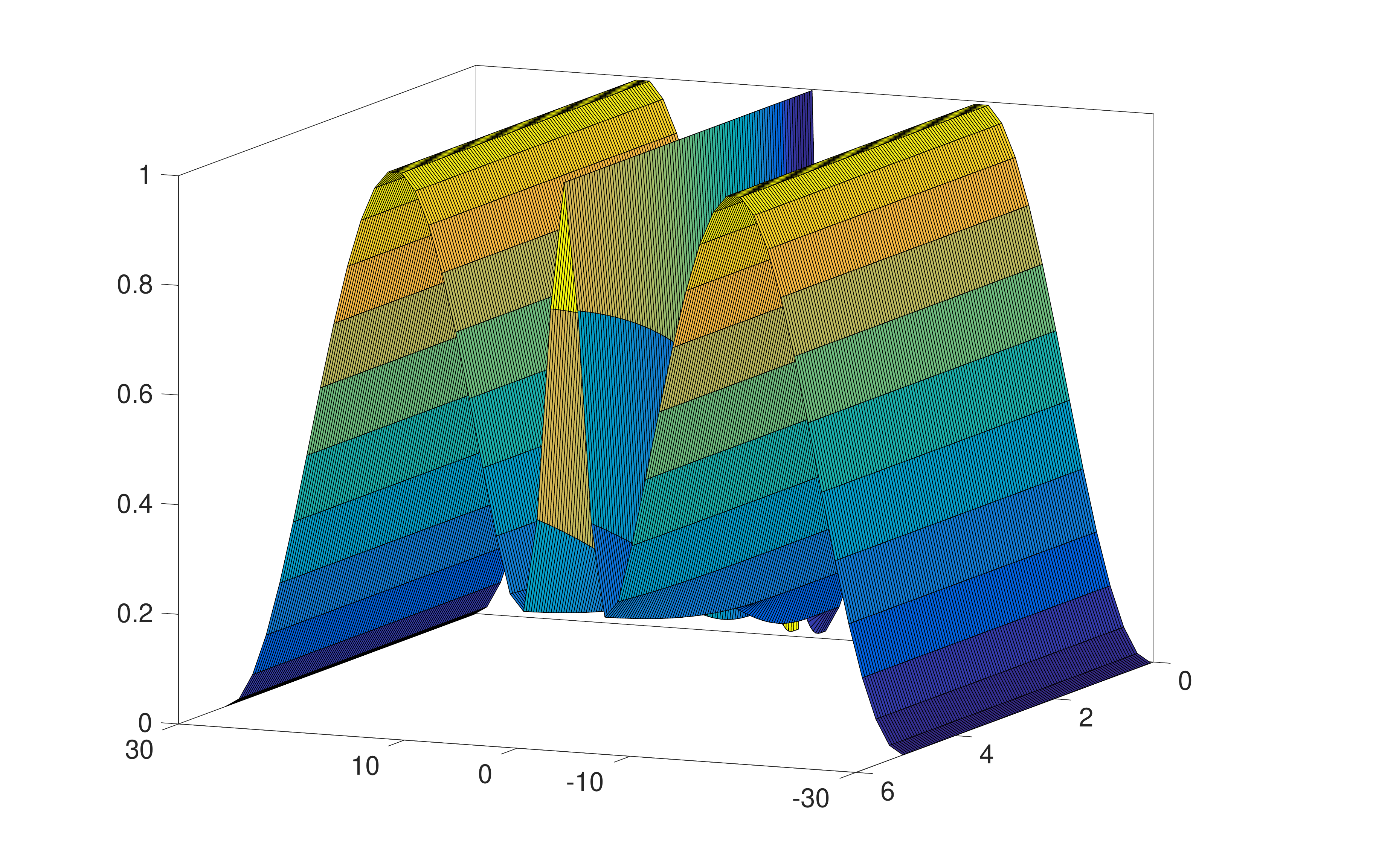} 
		\includegraphics[scale=0.21]{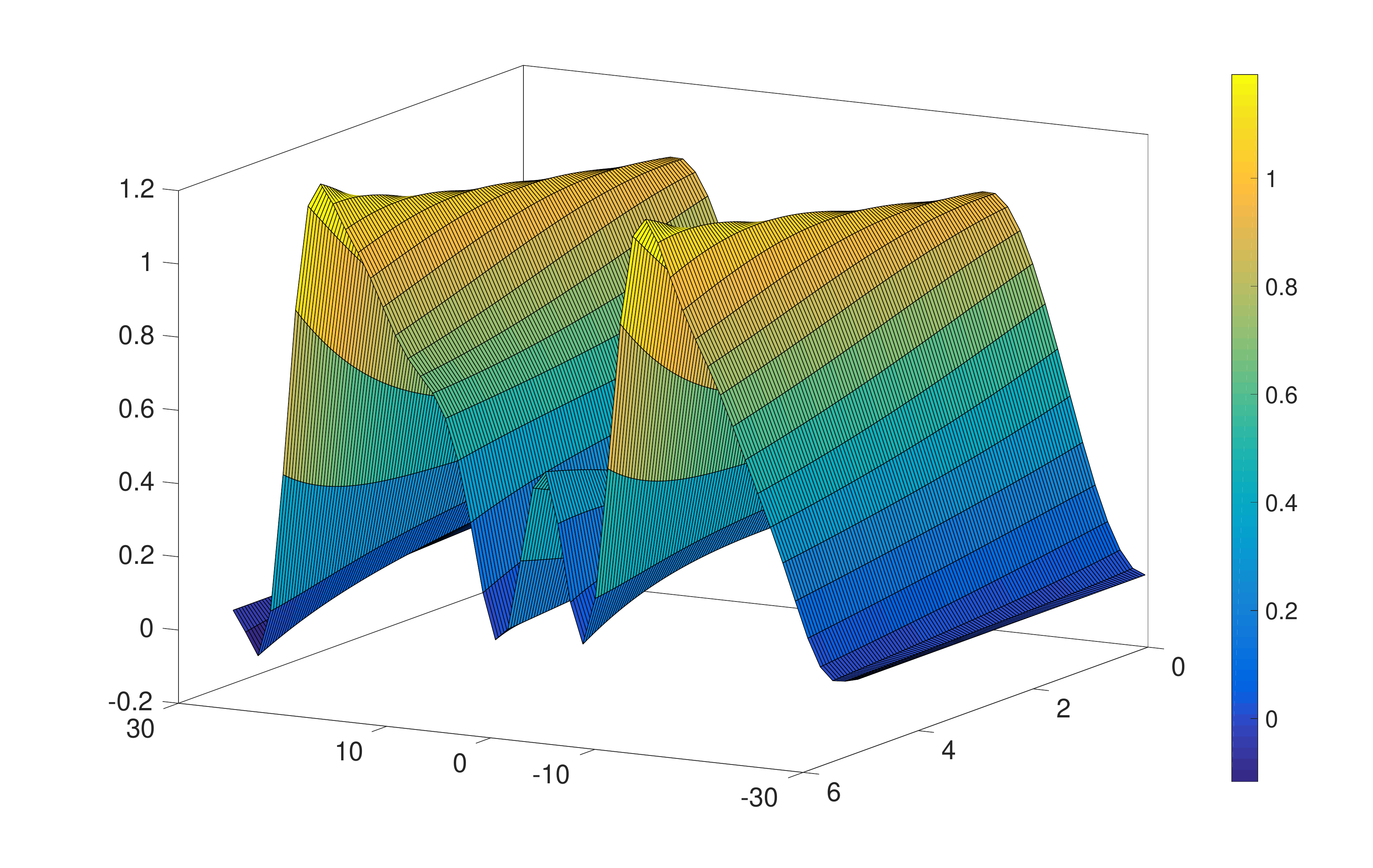}   			
	\end{center}
	\par
	\caption{Function $u_d(x,t)=\exp(-x^2)+\sin^2(\frac{\pi x}{30})$ (left) and state function $u(x,t)$ (right). $T=5s, N=50, \Delta t=2\times 10^{-2}$ and 
	 $\ell=4, \gamma=400$.}\label{fig.state2target2.ks}
	\end{figure}		
	
	\begin{figure}[h!]
	\begin{center}
		\includegraphics[scale=0.21]{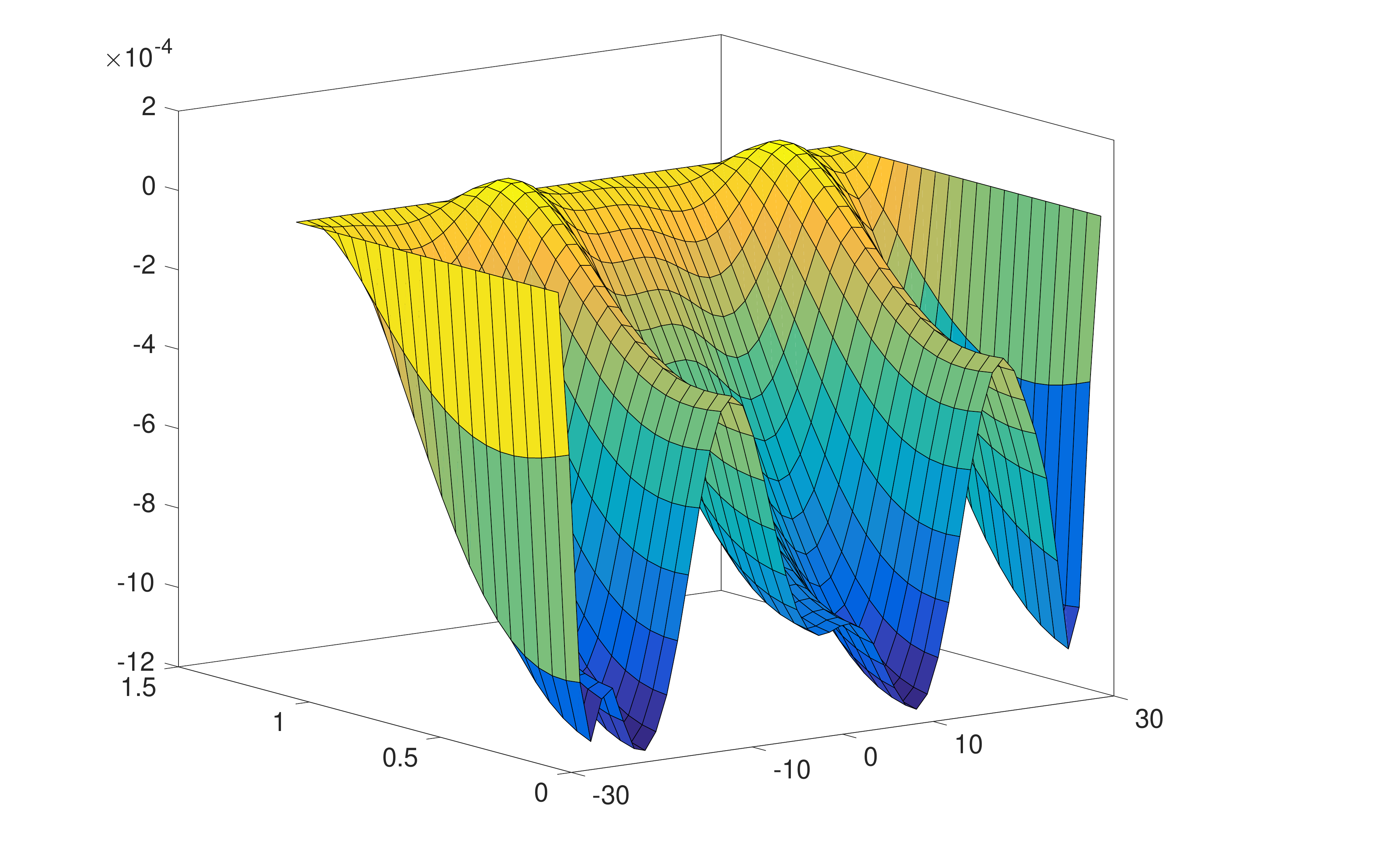} 
		\includegraphics[scale=0.21]{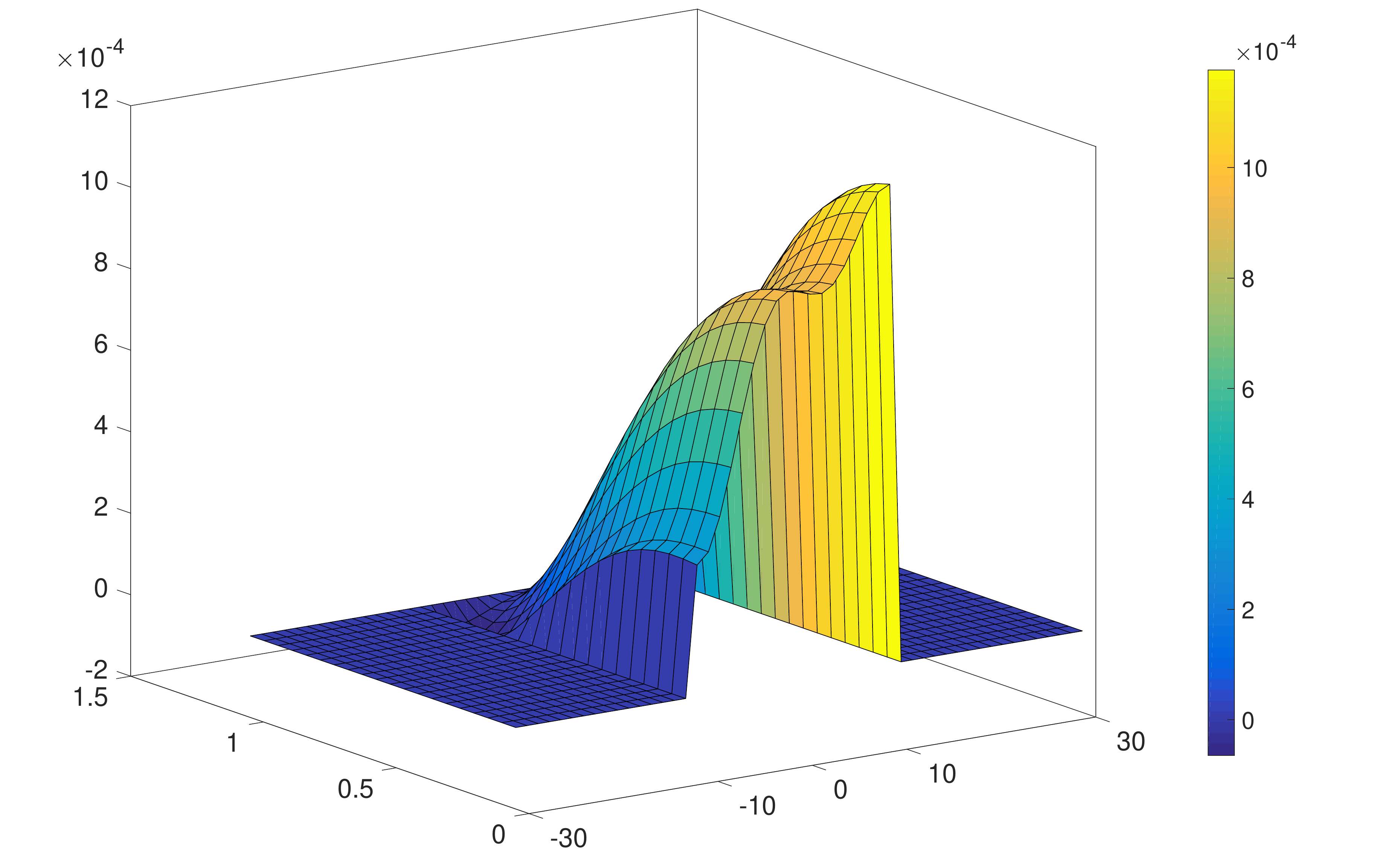}   		
	\end{center}
	\par
	\caption{Disturbance signal $\psi$  (left) in the interval $(-30,30)$ and control $v$ (right) with support in  $\mathcal{O}=(-10,10)$.
	$T=1s, N=50, \Delta t=2\times 10^{-2}$ and $\ell=\gamma=10$.}
	\label{fig.control3disturbance3.ks}
	\end{figure}	
	
	\begin{figure}[h!]
	\begin{center}
		\includegraphics[scale=0.19]{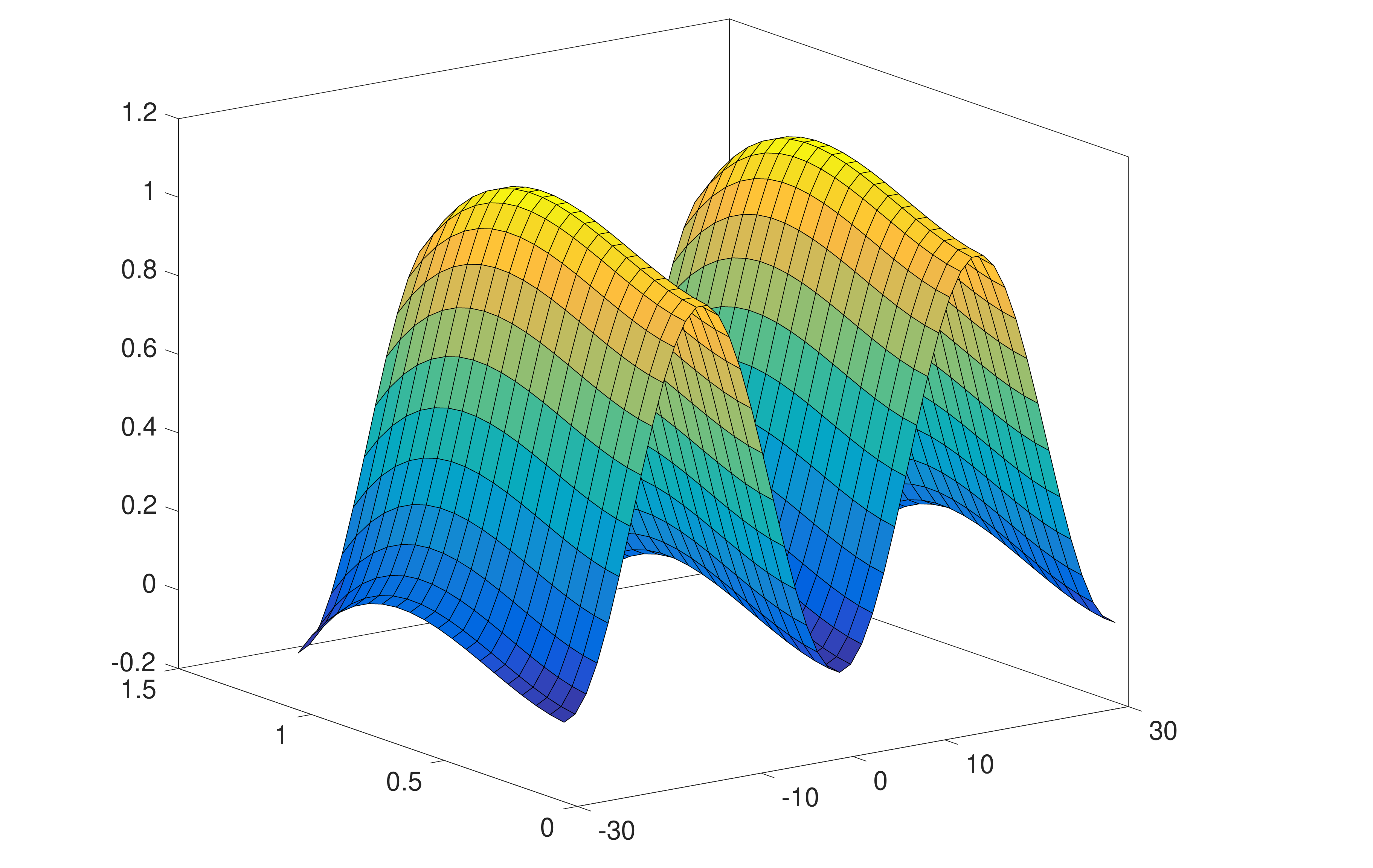} 
		\includegraphics[scale=0.19]{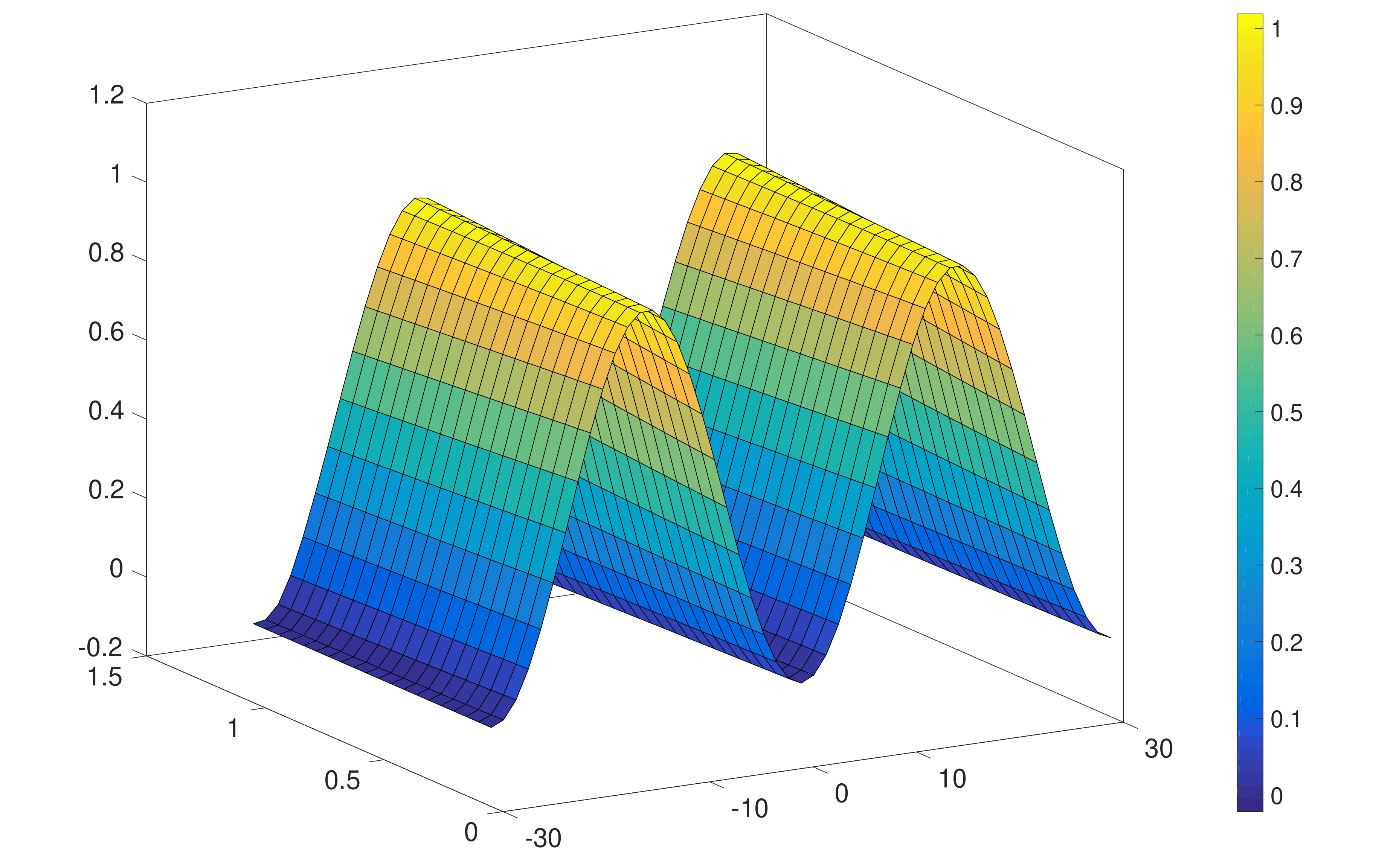}   			
	\end{center}
	\par
	\caption{Function $u_d(x,t)=(-t^{3}+t^{2})+\sin^2(\frac{\pi x}{30})$  (left) and state function $u(x,t)$ (right). 
	$T=1s, N=50, \Delta t=2\times 10^{-2}$ and $\ell=\gamma=10$.}\label{fig.state3target3.ks}
	\end{figure}			
%%%%%%%%%%%%%%%%%%%%%%%%%%%%%%%%%%%%%%%%%%%%%%%%%%%
%%%%%%%%% CONTROLLABILITY %%%%%%%%%%%%%%%
%%%%%%%%%%%%%%%%%%%%%%%%%%%%%%%%%%%%%%%%%%%%%%%%%%%
\section{\normalsize{Controllability}}\label{section.controllability}
	In the previous section the robust control problem was characterized by a coupled system which needs to be solved. 
	In order to establish a Stackelberg strategy for the case in which the leader control leads the state to the trajectory
	in a finite time, we must find
	a function $h\in L^2(0,T;L^2(\omega))$ such that the corresponding $u$ solution to \eqref{eq.ks} satisfies 
	$u(T)=\overline{u}(T)$, with $\overline{u}$ solution to \eqref{intro.target.u}. To be precise, to 
	prove the exact controllability to the trajectories, we consider two relevant control systems,
	namely, the linearized system of \eqref{eq.ks.characterization} around $\overline{u}$ which is
	\begin{equation}\label{sys.linearized.coupled}
	\begin{cases}
		u_{t}+u_{xxxx}+u_{xx}+\overline{u}u_x+\overline{u}_xu= f_1+
		h1_{\omega}+ (-\ell^{-2}1_{\mathcal{O}}+\gamma^{-2})z & \text{in }Q,\\
		-z_{t}+z_{xxxx}+z_{xx}-(u+\overline{u})z_{x}=f_2+(u-u_{d})1_{\mathcal{O}_{d}} & \text{in }Q,\\
		u(0,t)=u(1,t)=z(0,t)=z(1,t)=0 & \text{on }(0,T),\\
		u_{x}(0,t)=u_{x}(1,t)=z_{x}(0,t)=z_{x}(1,t)=0 & \text{on }(0,T),\\
		u(\cdot,0)=u_{0}(\cdot),\,\, z(\cdot,T)=0& \text{in }(0,1).
	\end{cases}
	\end{equation}	
	and the adjoint system associated to \eqref{sys.linearized.coupled}
	 \begin{equation}\label{sys.adjoint.coupled}
    \left\{
    \begin{array}{lll}
    \begin{array}{llll}
        -\varphi_{t}+\varphi_{xxxx}+\varphi_{xx}-\overline{u}\varphi_x
        =g_1+\theta1_{\mathcal{O}_d} & \text{in } Q,\\
        \theta_{t}+\theta_{xxxx}+\theta_{xx}+(\overline{u}\theta)_x
        =g_2 -\ell^{-2}\varphi 1_{\mathcal{O}}+\gamma^{-2}\varphi  & \text{in } Q,\\        
       \varphi(0,t)=\varphi(1,t)=\theta(0,t)=\theta(1,t)=0 & \text{on }(0,T),\\
		\varphi_{x}(0,t)=\varphi_{x}(1,t)=\theta_{x}(0,t)=\theta_{x}(1,t)=0 & \text{on }(0,T),\\
        \varphi(\cdot,T)= \varphi_T(\cdot),\, \theta(\cdot,0)=0& \text{in }(0,1).
    \end{array}
    \end{array}
    \right.
	\end{equation}
	where $f_1,f_2,g_1,g_2$ and $u_0,\varphi_T$ are in appropriate spaces.

	Our strategy is as follows:
	\begin{enumerate}
	\item [i)] Establish first a global Carleman inequality for the system \eqref{sys.adjoint.coupled}. Those inequality allows us to prove a null controllability result for the linearized system \eqref{sys.linearized.coupled} with
		right--hand side satisfying suitable decreasing properties near $t=T$. 
	\item [ii)] Afterwards, to establish	 the local exact controllability to the trajectories for the KS system. Here, fixed 
		point arguments will be used. 
	\end{enumerate}

%%%%%%%%%%%%%%%%%%%%%%%%%%%%%%%%%%%%%%%%%%%%%%%%%%%
%%%%%%%%% CARLEMAN ESTIMATE %%%%%%%%%%%%%%%
%%%%%%%%%%%%%%%%%%%%%%%%%%%%%%%%%%%%%%%%%%%%%%%%%%%
\subsection{\normalsize{Carleman inequalities}}
	We first define some weight functions which will be useful in the sequel. Let $\omega$ and $\omega_0$ be non empty 
	subsets of $(0,1)$ such that $\omega_0\subset\subset\omega\cap\mathcal{O}_d$ and $\eta\in C^4([0,1])$ such that 
	\begin{equation*}
		|\nabla \eta|>0 \mbox{ in }[0,1]\setminus\overline\omega_0,\,\,\,\, \eta>0
     	\mbox{ in }(0,1)\,\,\, \mbox{ and }\,\, \eta(0)=\eta(1)=0.
     \end{equation*}
     The existence of such a function is proved in \cite{1996FursikovImanuvilov}. For some positive real number 
     $\lambda$, we consider the weight functions:
	\begin{equation}\label{carleman_weights}
    \begin{array}{lll}
    	&\alpha(x,t)=\dfrac{e^{7\lambda\|\eta\|_{\infty}}-e^{\lambda(2\|\eta\|_{\infty}+\eta(x))}} 
    	{(t(T-t))^{2/5}},\quad \xi(x,t)=\dfrac{e^{\lambda(2\|\eta\|_{\infty}+\eta(x))}}{(t(T-t))^{2/5}},\\
    	&\widehat\alpha(t)=\max\limits_{x\in [0,1]} \alpha(x,t),\quad \quad
    	\widehat\xi(t)=\max\limits_{x\in[0,1]} \xi(x,t),\\
    	&\breve\alpha(t)=\min\limits_{x\in[0,1]} \alpha(x,t),\quad 
    	\quad \,\,\,\,\breve\xi(t)=\min\limits_{x\in[0,1]} \xi(x,t).
    \end{array}
	\end{equation}

    Henceforth, the constants $a_0$ and $m_0$ are fixed, and satisfy  
	\begin{equation}\label{constantes}
		\frac{5}{4}\leq a_0<a_0+1<m_0<2a_0,\quad m_0<2+a_0. 
	\end{equation}
	Moreover, we will use the following notation for the weighted energy:
	\[I_{0}(\rho, u)=\int\limits_{0}^T\int\limits_0^1\rho(s^{-1}\xi^{-1}(|u_t|^2+|u_{xxxx}|^2)dxdt+I_1(\rho, u),\]
	\[I_1(\rho, u)= \int\limits_{0}^T\int\limits_0^1\rho(s\lambda^2\xi|u_{xxx}|^2+s^3\lambda^4\xi^3|u_{xx}|^2
		+s^5\lambda^6\xi^5|u_x|^2+s^7\lambda^8\xi^7|u|^2)dxdt,\]
	and we also recall the space 		
	\[\mathcal{Z}:=C([0,T];H_0^2(0,1))\cap L^2(0,T;H^4(0,1))\cap L^\infty(0,T;W^{1,\infty}(0,1)).\]

	Our Carleman estimate is given in the the following proposition.

	\begin{proposition}\label{teo.carleman_carleman1}
    Let $\overline{u}\in \mathcal{Z}$ and assume that $\omega\cap \mathcal{O}_d\neq \emptyset$ and that 
    $\ell$ and $\gamma$ are large enough.
    Then, there exist a constant  $\overline{\lambda}$ such that for any $\lambda\geq\overline{\lambda}$
    exist two constants $\overline{s}(\lambda)>0$ and $C=C(\lambda)>0$ depending only on 
    $\omega$  such that  for any $g_1,g_2\in L^2(Q)$ and any $\varphi_T\in L^2((0,1))$,  the solution of
    \eqref{sys.linearized.coupled} satisfies
    \begin{equation}\label{carleman_obs_ineq1}
    \begin{aligned}
        I_1(e^{-2s\alpha-2a_0s\widehat\alpha},\theta)+&I_0(e^{-2sm_0\alpha},\varphi)
        \leq C\Biggl(s^{15}\lambda^{16}\displaystyle\iint\limits_{\omega\times(0,T)}
        e^{-2s\breve\alpha-2a_0s\widehat\alpha}(\widehat\xi)^{29}|\varphi|^2 dxdt\\
        &+s^{7}\lambda^{8}\displaystyle\iint\limits_{Q}
        e^{-2s\widehat\alpha-2a_0s\widehat\alpha}(\widehat\xi)^{7}|g_1|^2dxdt
        +s^{7}\lambda^{8}\displaystyle\iint\limits_{Q}e^{-2a_0s\widehat\alpha}|g_2|^2dxdt\Biggr),
    \end{aligned}
    \end{equation}
    for any $s\geq \overline{s}$.
	\end{proposition}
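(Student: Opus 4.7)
The plan is to establish \eqref{carleman_obs_ineq1} by a coupled-system Carleman argument in which only $\varphi$ is observed in $\omega$. I would proceed in four main steps.

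First, I would apply a ``scalar'' Carleman inequality for a fourth-order parabolic operator of the form $\pm\partial_t+\partial_x^{\,4}+\partial_x^{\,2}+\text{(lower order drift)}$ separately to the $\theta$-equation (with weight $e^{-2s\alpha-2a_0s\widehat\alpha}$, yielding a bound on $I_1(e^{-2s\alpha-2a_0s\widehat\alpha},\theta)$) and to the $\varphi$-equation (with weight $e^{-2sm_0\alpha}$, yielding a bound on $I_0(e^{-2sm_0\alpha},\varphi)$). Such an estimate is the one used in \cite{2011cerpamercado} for the scalar KS equation; since $\overline u\in\mathcal Z\hookrightarrow L^\infty(0,T;W^{1,\infty}(0,1))$, the drift terms $\overline u\partial_x$ and $\partial_x(\overline u\,\cdot)$ are absorbed in the lower-order side. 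This yields two inequalities, each with a local observation term in $\omega_0\subset\subset\omega\cap\mathcal O_d$ on the corresponding component.

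Second, I would add the two estimates and treat the coupling terms on the right-hand sides. The $\varphi$-equation produces $\|\theta1_{\mathcal O_d}\|^2$ weighted by $e^{-2sm_0\alpha}$, and since $m_0<2+a_0$ and $a_0+1<m_0<2a_0$ (the precise role of \eqref{constantes}), the relation $e^{-2sm_0\alpha}\leq C\,s^{-k}\lambda^{-k'}(\widehat\xi)^{-k''}e^{-2s\alpha-2a_0s\widehat\alpha}$ allows one to absorb $\|\theta1_{\mathcal O_d}\|^2$ into the $s^7\lambda^8\xi^7|\theta|^2$ piece of $I_1(e^{-2s\alpha-2a_0s\widehat\alpha},\theta)$ for $s,\lambda$ large. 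Similarly, the $\theta$-equation produces $\ell^{-2}\|\varphi1_{\mathcal O}\|^2$ and $\gamma^{-2}\|\varphi\|^2$, which by choosing $\ell,\gamma\geq$ a fixed threshold are absorbed into $I_0(e^{-2sm_0\alpha},\varphi)$ on the left.

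Third, and this is the main obstacle, I must eliminate the local observation of $\theta$ on $\omega_0$, keeping only a local observation of $\varphi$. I use the key geometric hypothesis $\omega\cap\mathcal O_d\neq\emptyset$ to pick $\omega_0\subset\subset\omega\cap\mathcal O_d$, and on $\omega_0$ the first equation of \eqref{sys.adjoint.coupled} gives
\begin{equation*}
\theta=-\varphi_t+\varphi_{xxxx}+\varphi_{xx}-\overline u\,\varphi_x-g_1.
\end{equation*}
Introducing a cutoff $\chi\in C^\infty_c(\omega)$ with $\chi\equiv1$ on $\omega_0$ and $0\leq\chi\leq1$, the local $\theta$-term is replaced by $\iint\chi\,\rho\,\theta^2\,dxdt$, into which one substitutes the identity above and integrates by parts repeatedly in both $x$ and $t$, shifting the four space derivatives and the time derivative of $\varphi$ onto the weight $\chi\rho$ and onto the other copy of $\theta$ (or, after one more use of the equation, onto $\varphi$). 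Each term that arises is either (i) a local $\varphi$ integral with a weight bounded by $Cs^{15}\lambda^{16}e^{-2s\breve\alpha-2a_0s\widehat\alpha}(\widehat\xi)^{29}$, accounting for the powers in \eqref{carleman_obs_ineq1}, (ii) a source term in $g_1$ absorbed on the right, or (iii) a global weighted term in $\theta$ or $\varphi$ that one absorbs using a small parameter and the $I_1(\cdot,\theta)$, $I_0(\cdot,\varphi)$ pieces on the left for $s\lambda$ large. The careful bookkeeping of the powers of $s,\lambda,\widehat\xi$ accumulated through these integrations by parts — and verifying that the condition $\frac{5}{4}\leq a_0<a_0+1<m_0<2a_0$ together with $m_0<2+a_0$ is exactly what allows all absorptions to succeed — is where the technical work lies.

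Finally, collecting the two estimates and the reduction of step three, and fixing first $\lambda\geq\overline\lambda$ so that the $\lambda$-absorptions close, and then $s\geq\overline s(\lambda)$ and $\ell,\gamma\geq$ some threshold so that all remaining small factors can absorb the coupling and lower-order terms, we obtain \eqref{carleman_obs_ineq1}. The hardest step is the third one, namely the technical exchange of a $\theta$-observation for a $\varphi$-observation through the fourth-order equation, since each integration by parts introduces new weight powers that must remain below the advertised $s^{15}\lambda^{16}(\widehat\xi)^{29}$ threshold.
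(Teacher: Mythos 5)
Your proposal follows essentially the same route as the paper: a Carleman estimate for $\theta$ with the weight $e^{-2s\alpha-2a_0s\widehat\alpha}$, one for $\varphi$ with the weight $e^{-2m_0s\alpha}$, cross-absorption of the coupling terms using \eqref{constantes} and the largeness of $\ell,\gamma$, and finally the exchange of the local $\theta$-observation for a local $\varphi$-observation on $\omega_0\subset\subset\omega\cap\mathcal{O}_d$ via the first adjoint equation, a cutoff and repeated integration by parts, which is exactly where the powers $s^{15}\lambda^{16}(\widehat\xi)^{29}$ arise. The only minor deviation is that in the first step the paper conjugates $\theta^{*}=e^{-a_0s\widehat\alpha}\theta$ and splits $\theta^{*}=\widehat\theta+\tilde\theta$, treating the part carrying $g_2$ and the $\varphi$-coupling by the maximal-regularity estimate of Lemma \ref{apendix.lema.strong.linear} and reserving the Carleman inequality of Lemma \ref{lema.cerpaetal2015} for the part carrying $\rho^{*}_{t}\theta$, whereas you apply the Carleman inequality to the full $\theta$-equation directly; this does not change the structure of the argument.
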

	Before giving the proof of Proposition \ref{teo.carleman_carleman1}, we recall some technical results. Let us introduce
	the system
	\begin{equation}\label{eq.ks.aux1}
	\begin{cases}
		u_{t}+u_{xxxx}+u_{xx}+\overline{u}u_x+\overline{u}_xu=f & \text{in }Q,\\
		u(0,t)=u(1,t)=u_{x}(0,t)=u_{x}(1,t)=0 & \text{on }(0,T),\\
		u(\cdot,0)=u_{0}(\cdot)& \text{in }(0,1),
	\end{cases}
	\end{equation}	
	where $f\in L^2(Q)$ and $\overline{u}\in \mathcal{Z}$.
	
	\begin{lemma}\label{lema.cerpaetal2015}
    	Assume  $f\in L^2(Q)$ and  $\omega\subset (0,1)$. Then, there exist positive constants $C(\omega),\,s_1$ and 
    	$\lambda_1$ such that 
    	\begin{equation}\label{carleman.cerpaetal2015}
        I_1(e^{-2s\alpha},u) \leq I_0(e^{-2s\alpha},u)\leq C\Bigl(\int\limits_0^T\int\limits_{0}^1 e^{-2s\alpha}|f|^2 dxdt
        	+s^7\lambda^8\int\limits_0^T\int\limits_{\omega}e^{-2s\alpha}\xi^7|u|^2 dxdt \Bigr),
    	\end{equation}
    	for every $s\geq s_1,\,\lambda\geq \lambda_1$, and $u$ solution to \eqref{eq.ks.aux1} with 
    	$\overline{u}\in \mathcal{Z}$.
	\end{lemma}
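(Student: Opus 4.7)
The plan is to derive the coupled estimate from two applications of the single-equation Carleman inequality of Lemma~\ref{lema.cerpaetal2015}, one to each equation of \eqref{sys.adjoint.coupled}, then absorb the cross terms and eliminate the resulting local term in $\theta$ by means of the first adjoint equation on $\omega\cap\mathcal{O}_d$. Note that the left-hand weights are asymmetric: $\varphi$ is measured with the heavy weight $e^{-2sm_0\alpha}$ (with $m_0>a_0+1$) while $\theta$ carries the lighter weight $e^{-2s\alpha-2a_0 s\widehat\alpha}$, and this gap is precisely what makes the absorption of the coupling $\theta\,1_{\mathcal{O}_d}$ possible.

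First I would apply Lemma~\ref{lema.cerpaetal2015} to the backward $\varphi$-equation (after time reversal) with source $g_1+\theta\,1_{\mathcal{O}_d}$, using the weight $m_0\alpha$ (equivalently, replacing $s$ by $m_0 s$), to control $I_0(e^{-2sm_0\alpha},\varphi)$ by weighted integrals of $|g_1|^2$ and of $|\theta|^2$ on $\mathcal{O}_d$, plus a local observation of $\varphi$ on $\omega$. Then I would apply the same lemma to the forward $\theta$-equation with source $g_2-\ell^{-2}\varphi\,1_{\mathcal{O}}+\gamma^{-2}\varphi$ and multiply the output by the purely temporal factor $e^{-2a_0 s\widehat\alpha}$ (which commutes with spatial operators) to control $I_1(e^{-2s\alpha-2a_0 s\widehat\alpha},\theta)$ by weighted integrals of $|g_2|^2$ and of $(\ell^{-2}+\gamma^{-2})|\varphi|^2$, plus a local observation of $\theta$ on $\omega$. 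Adding the two inequalities, the coupling integral on $\mathcal{O}_d$ is absorbed into the $s^7\lambda^8\xi^7|\theta|^2$ component of $I_1$ after observing that $e^{-2sm_0\alpha}\leq Cs^{-7}\lambda^{-8}\xi^{-7}e^{-2s\alpha-2a_0 s\widehat\alpha}$ for $\lambda$ and $s$ large (using $m_0-1>a_0$ and the bound $\widehat\alpha\leq C\breve\alpha$ valid for $\lambda$ large), while the $(\ell^{-2}+\gamma^{-2})|\varphi|^2$ term is absorbed into $I_0(e^{-2sm_0\alpha},\varphi)$ provided $\ell,\gamma$ are taken large enough, which is the hypothesis on these constants in the statement.

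The delicate step is eliminating the local term $s^7\lambda^8\iint_{\omega\times(0,T)}e^{-2s\alpha-2a_0 s\widehat\alpha}\xi^7|\theta|^2$ and replacing it by the announced local integral of $|\varphi|^2$. Using $\omega\cap\mathcal{O}_d\neq\emptyset$, fix $\omega_0\subset\subset\omega\cap\mathcal{O}_d$ and a cutoff $\chi\in C_c^\infty(\omega\cap\mathcal{O}_d)$ with $\chi\equiv 1$ on $\omega_0$. On $\mathcal{O}_d$ the first equation of \eqref{sys.adjoint.coupled} yields $\theta=-\varphi_t+\varphi_{xxxx}+\varphi_{xx}-\overline{u}\varphi_x-g_1$; substituting this identity into $\iint\chi^2\rho\,|\theta|^2$ with $\rho:=s^7\lambda^8 e^{-2s\alpha-2a_0 s\widehat\alpha}\xi^7$ and performing four spatial integrations by parts (plus one in $t$) moves all $\varphi$-derivatives onto $\chi^2\rho\,\theta$, the boundary terms vanishing thanks to $\chi$. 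Each derivative landing on $\rho$ produces a factor of $s\lambda\widehat\xi$, so the purely $\varphi$-contribution ends up weighted by $s^{15}\lambda^{16}\widehat\xi^{29}$, matching the exponent in \eqref{carleman_obs_ineq1}; all terms carrying derivatives of $\theta$ are absorbed into a small fraction of $I_1(e^{-2s\alpha-2a_0 s\widehat\alpha},\theta)$ via Cauchy--Schwarz, Young's inequality, and the extra decay $e^{-2a_0 s\widehat\alpha}$, while the $g_1$-terms feed into the right-hand side source.

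I expect the principal obstacle to be the combinatorial bookkeeping in this last step: enumerating every term produced by the four integrations by parts against the fourth-order operator and verifying that each admits either an absorbable $\theta$-factor or the announced $\widehat\xi^{29}$ weight on the pure $\varphi$-contribution. Once this computation is complete, choosing $\lambda\geq\overline{\lambda}$ and $s\geq\overline{s}(\lambda)$ sufficiently large, together with $\ell,\gamma$ large enough to absorb the low-order $\varphi$-coupling, yields \eqref{carleman_obs_ineq1}.
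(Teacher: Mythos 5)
Your proposal does not address the statement you were asked to prove. The statement is Lemma~\ref{lema.cerpaetal2015}: the Carleman estimate for a \emph{single} linearized KS equation \eqref{eq.ks.aux1}, with one weight $e^{-2s\alpha}$, one source $f$, and one local observation of $u$ on $\omega$. What you have outlined instead is a proof of the \emph{coupled} Carleman inequality \eqref{carleman_obs_ineq1} of Proposition~\ref{teo.carleman_carleman1} for the adjoint system \eqref{sys.adjoint.coupled} — the two asymmetric weights, the absorption of the coupling term $\theta 1_{\mathcal{O}_d}$, the choice of $\ell,\gamma$ large, and the elimination of the local $\theta$-term via the first adjoint equation on $\omega\cap\mathcal{O}_d$ all belong to that later result. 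Worse, your argument takes Lemma~\ref{lema.cerpaetal2015} as its principal tool (``two applications of the single-equation Carleman inequality of Lemma~\ref{lema.cerpaetal2015}''), so as a proof of the lemma itself it is circular: you assume exactly what is to be established.

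For the record, the paper does not prove Lemma~\ref{lema.cerpaetal2015} by any such system argument. It invokes the Carleman estimate of Cerpa--Mercado--Pazoto \cite{2015-cerpaatal} for the fourth-order operator with $\overline{u}=0$, and observes that the perturbation terms $\overline{u}u_x+\overline{u}_x u$ are harmless: since $\overline{u}\in\mathcal{Z}\subset L^\infty(0,T;W^{1,\infty}(0,1))$, these terms can be moved to the right-hand side, and the resulting contribution $\iint e^{-2s\alpha}\bigl(|\overline{u}|^2|u_x|^2+|\overline{u}_x|^2|u|^2\bigr)\,dxdt$ is absorbed by the terms $s^5\lambda^6\xi^5|u_x|^2$ and $s^7\lambda^8\xi^7|u|^2$ of $I_1(e^{-2s\alpha},u)$ on the left-hand side once $s$ and $\lambda$ are large enough (the paper also notes that the slightly different weights of \cite{2015-cerpaatal} do not matter, the essential feature being that $\alpha\to+\infty$ as $t\to 0$ and $t\to T$). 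A correct blind proof would have to either carry out the Carleman computation for $\partial_t+\partial_x^4$ directly or cite the constant-coefficient case and perform this absorption; neither step appears in your proposal.
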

	\begin{Obs}
		Carleman inequality of Lemma \ref{lema.cerpaetal2015} was proven in \cite{2015-cerpaatal} with $\overline{u}=0$. 
		However, thanks to the Carleman weight functions and the fact that $\overline{u}\in \mathcal{Z}$ its extension 
		to \eqref{eq.ks.aux1} is direct. Besides, in \cite{2015-cerpaatal} slightly different weight functions are used 
		to prove Lemma \ref{lema.cerpaetal2015}. Nevertheless, 
    	the inequality remains valid since the key point of the proof is that $\alpha$ goes to $+\infty$ when $t$ tends to $0$ 
    	and $T$. In addition, there exists another Carleman estimate for the system
		\eqref{eq.ks.aux1} (with $\overline{u}=0$) \cite{2012Zhou}. To our propose is convenient to use \cite{2015-cerpaatal}
		instead of \cite{2012Zhou}.
	\end{Obs}

	\begin{Obs}\label{obs.carleman.cerpaetal2015obs}
		A direct consequence of the weight functions \eqref{carleman_weights} shows that, the first term
		in the right--hand side of \eqref{carleman.cerpaetal2015} can be upper bounded by the term 
		$\|e^{-2s\breve\alpha+s\widehat\alpha}f\|^2_{L^2(Q)}$. Therefore, \eqref{carleman.cerpaetal2015} is transformed
		in 
		\begin{equation}\label{carleman.cerpaetal2015obs}
        			I_1(e^{-2s\alpha},u) \leq I_0(e^{-2s\alpha},u)\leq C\Bigl(\int\limits_0^T\int\limits_{0}^1
			e^{-4s\breve\alpha+2s\widehat\alpha}|f|^2 dxdt
        			+s^7\lambda^8\int\limits_0^T\int\limits_{\omega}e^{-2s\alpha}\xi^7|u|^2 dxdt \Bigr),
    		\end{equation}
		for every $s\geq s_1,\,\lambda\geq \lambda_1$, and $u$ solution to \eqref{eq.ks.aux1}.
	\end{Obs}

	Another result holds from the relation between the weight function $\widehat\alpha$ and $\breve\alpha$. The interested
	reader can see \cite{2018-robust-ns} for more details.
	
    \begin{lemma}\label{lema.relationship.weights}
		For any $\varepsilon>0$, any $M_1,M_2\in \mathbb{R}$, there exists $\lambda_0>0$ and $C=C(\varepsilon,M_1, M_2)>0$ 
		such that 
		\begin{equation}\label{c.intro.inequality.special}
    		e^{s\widehat\alpha}\leq Cs^{M_1}\lambda^{M_2}(\breve\xi)^{M_1}e^{s(1+\varepsilon)\breve\alpha},
		\end{equation}
    	for every $\lambda>\lambda_0$. 
	\end{lemma}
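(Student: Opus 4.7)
The plan is to prove the inequality pointwise in $t$ by computing $\widehat\alpha$, $\breve\alpha$, and $\breve\xi$ in closed form and then observing that $\widehat\alpha-(1+\varepsilon)\breve\alpha$ is strictly negative for $\lambda$ large, with an amplitude that grows doubly exponentially in $\lambda$ and therefore dominates every polynomial correction in $s$, $\lambda$, and $\breve\xi$.

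First I would introduce the notation $\tau(t):=(t(T-t))^{-2/5}$ and use that $\eta\geq 0$ on $[0,1]$ with $\eta(0)=\eta(1)=0$ and $\|\eta\|_\infty>0$. The spatial extrema of $\alpha(\cdot,t)$ and $\xi(\cdot,t)$ at fixed $t$ are attained either where $\eta=0$ or where $\eta=\|\eta\|_\infty$, giving the closed forms
\begin{align*}
\widehat\alpha(t)=\bigl(e^{7\lambda\|\eta\|_\infty}-e^{2\lambda\|\eta\|_\infty}\bigr)\tau(t),\quad
\breve\alpha(t)=\bigl(e^{7\lambda\|\eta\|_\infty}-e^{3\lambda\|\eta\|_\infty}\bigr)\tau(t),\quad
\breve\xi(t)=e^{2\lambda\|\eta\|_\infty}\tau(t).
\end{align*}
Subtracting yields
\[
\widehat\alpha-(1+\varepsilon)\breve\alpha=\bigl[-\varepsilon e^{7\lambda\|\eta\|_\infty}+(1+\varepsilon)e^{3\lambda\|\eta\|_\infty}-e^{2\lambda\|\eta\|_\infty}\bigr]\,\tau(t),
\]
and for $\lambda\geq\lambda_0(\varepsilon)$ the bracket is bounded above by $-\kappa_\lambda$ with $\kappa_\lambda:=(\varepsilon/2)\,e^{7\lambda\|\eta\|_\infty}$, because the leading negative term dominates the other two exponentials.

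Next, substituting $\breve\xi=e^{2\lambda\|\eta\|_\infty}\tau$ and dividing the target by $e^{s(1+\varepsilon)\breve\alpha}$ reduces the claim to the pointwise bound
\[
e^{-s\kappa_\lambda\,\tau(t)}\;\leq\;C\,s^{M_1}\lambda^{M_2}\,e^{2M_1\lambda\|\eta\|_\infty}\,\tau(t)^{M_1},\qquad t\in(0,T).
\]
Introducing the dimensionless variable $x:=s\kappa_\lambda\,\tau(t)\geq s\kappa_\lambda\,\tau_{\min}$ with $\tau_{\min}=(T^2/4)^{-2/5}$, this rearranges to $x^{-M_1}e^{-x}\leq C\,\kappa_\lambda^{-M_1}\lambda^{M_2}\,e^{2M_1\lambda\|\eta\|_\infty}$. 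The left-hand side is a universal function of $x$: for $M_1\leq 0$ it is bounded on $(0,\infty)$ by $|M_1|^{|M_1|}e^{-|M_1|}$, while for $M_1>0$ one has $x^{-M_1}e^{-x}\leq e^{-x/2}$ once $x$ exceeds a constant depending only on $M_1$. In the Carleman regime where $s$ is bounded below, enlarging $\lambda_0$ makes $s\kappa_\lambda\tau_{\min}$ exceed any such threshold, so the LHS is controlled by a constant $C_1(M_1)$.

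Finally, on the right-hand side, substituting $\kappa_\lambda=(\varepsilon/2)e^{7\lambda\|\eta\|_\infty}$ gives a factor $(\varepsilon/2)^{-M_1}\lambda^{M_2}\,e^{-5M_1\lambda\|\eta\|_\infty}$, which must be bounded below for $\lambda\to\infty$ by a constant depending only on $\varepsilon,M_1,M_2$. This is immediate if $M_1\leq 0$ (the exponential grows and $\lambda^{M_2}$ is dominated); if $M_1>0$ one instead uses the spare decay $e^{-x/2}$ from the LHS, and the doubly-exponential growth of $x\geq s_0\kappa_\lambda\tau_{\min}$ in $\lambda$ makes $e^{-x/2}$ vanish far faster than the single-exponential $e^{5M_1\lambda\|\eta\|_\infty}$ grows. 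The main obstacle is precisely this bookkeeping across the sign of $M_1$ together with the need to absorb the implicit lower bound on $s$ coming from the Carleman framework; but in both cases a sufficiently large $\lambda_0$ yields a uniform constant $C=C(\varepsilon,M_1,M_2)$.
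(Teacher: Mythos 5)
Your argument is correct, and it is in fact more than the paper provides: the paper states Lemma \ref{lema.relationship.weights} without proof, deferring to \cite{2018-robust-ns} (where the analogous estimate is proved with $\widehat\xi$ in place of $\breve\xi$), whereas you give a self-contained derivation, and of the slightly stronger $\breve\xi$ version at that. Your closed forms for $\widehat\alpha$, $\breve\alpha$, $\breve\xi$ are right (monotonicity of $\alpha$ and $\xi$ in $\eta(x)$ reduces the spatial extrema to $\eta=0$ and $\eta=\|\eta\|_\infty$), the identification of the gap $\widehat\alpha-(1+\varepsilon)\breve\alpha\leq-\kappa_\lambda\,\tau(t)$ with $\kappa_\lambda\sim\varepsilon e^{7\lambda\|\eta\|_\infty}$ is the key point, and the reduction to a bound on $x^{-M_1}e^{-x}$ for $x\geq s\kappa_\lambda\tau_{\min}$ correctly disposes of both signs of $M_1$. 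One remark worth keeping explicit: as you observe, for $M_1>0$ the inequality as literally stated fails as $s\to0^+$, so a lower bound $s\geq s_0>0$ is an implicit hypothesis; this is harmless here since the lemma is only invoked in the Carleman regime $s\geq C$, but it (together with the dependence of $C$ on $T$, $\|\eta\|_\infty$ and $s_0$) ought to be recorded if the proof is written out. With that caveat your proof is complete.
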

	\begin{Obs}
    	In relation to Lemma \ref{lema.relationship.weights}, it was proven in \cite{2018-robust-ns} for $\widehat\xi$ 
    	instead of $\breve\xi$, nevertheless, it is easy to verify that the same arguments holds. 
    \end{Obs}
   
	Now, in order to give the proof of Proposition \ref{teo.carleman_carleman1}, we adapt the structure made 
	by Montoya and deTeresa in \cite{2018-robust-ns}. More precisely, we must first make a Carleman estimate for $\theta$ 
	with appropriate weight functions. Afterwards, another Carleman inequality for $\varphi$ will be established. 
	The weight functions should be such that all terms respecto to $\theta$ in the right--hand side can be absorbed by the left--hand side. Finally, to estimate
	local terms of $\theta$, we will use the geometric condition $\mathcal{O}_d\cap\omega_0\neq \emptyset$.
	%%%%%%%%%%%%%%%%%%%%%%%%%%%%%%%%%%%%%%%%%%%%%%%%%%%%%%%%%%%%%%%%%%%%%%%%%%%%%%%%%%%%%%%%%%%%%
	%%%%%%%%%%%%% PROOF OF CARLEMAN ESTIMATE: THETA
	%%%%%%%%%%%%%%%%%%%%%%%%%%%%%%%%%%%%%%%%%%%%%%%%%%%%%%%%%%%%%%%%%%%%%%%%%%%%%%%%%%%%%%%%%%%%%%%    
	\begin{proof}
	{\bf{Carleman estimate for $\theta$.}}
		Let define $\theta^*:=\rho^*\theta$, where $\rho^*=\rho^*(t)=e^{-a_{0}s\widehat\alpha}$ 
    	and $a_0$ fixed satisfying \eqref{constantes}.
    	From \eqref{sys.adjoint.coupled}, $\theta^*$ is the solution of the following system
		\begin{equation*}
    	\left\{
        \begin{array}{lll}
        \begin{array}{llll}
        	\theta^*_{t}+\theta^*_{xxxx}+\theta^*_{xx}+(\overline{u}\theta^*)_x=\rho^*g_2+
        	\rho^*(-\ell^{-2}\varphi1_{\mathcal{O}}+\gamma^{-2}\varphi)+\rho^*_{t}\theta & \text{in }Q,\\
			\theta^*(0,t)=\theta^*(1,t)=\theta^*_{x}(0,t)=\theta^*_{x}(1,t)=0 & \text{on }(0,T),\\
       		\theta^*(\cdot,0)=0& \text{in }(0,1).
        \end{array}
    	\end{array}\right.
		\end{equation*} 
   		Now, we decompose $\theta^*$ as follows: 
    	\begin{equation}\label{carleman_decompose}
        	\theta^*=\widehat{\theta}+\tilde{\theta},
    	\end{equation}    
   		where $\widehat{\theta}$ and $\tilde{\theta}$ solve respectively
		\begin{equation}\label{carleman_sys_thetatilde}
    	\left\{
        \begin{array}{lll}
        \begin{array}{llll}
        	\tilde{\theta}_{t}+\tilde{\theta}_{xxxx}+\tilde{\theta}_{xx}+(\overline{u}\tilde{\theta})_x=\rho^*g_2+
        	\rho^*(-\ell^{-2}\varphi1_{\mathcal{O}}+\gamma^{-2}\varphi) & \text{in }Q,\\
			\tilde{\theta}(0,t)=\tilde{\theta}(1,t)=\tilde{\theta}_{x}(0,t)=\tilde{\theta}_{x}(1,t)=0 & \text{on }(0,T),\\
       		\tilde{\theta}(\cdot,0)=0& \text{in }(0,1).
        \end{array}
        \end{array}
    	\right.
		\end{equation} 
    	and
		\begin{equation}\label{carleman_sys_thetahat}
    	\left\{
        \begin{array}{lll}
        \begin{array}{llll}
        	\widehat{\theta}_{t}+\widehat{\theta}_{xxxx}+\widehat{\theta}_{xx}+(\overline{u}\widehat{\theta})_x=\rho^*_{t}\theta & \text{in }Q,\\
			\widehat{\theta}(0,t)=\widehat{\theta}(1,t)=\widehat{\theta}_{x}(0,t)=\widehat{\theta}_{x}(1,t)=0 & \text{on }(0,T),\\
       		\widehat{\theta}(\cdot,0)=0& \text{in }(0,1).        \end{array}
        \end{array}
    	\right.
		\end{equation}    
		For system \eqref{carleman_sys_thetatilde} we will use 
		Lemma \ref{apendix.lema.strong.linear} (see Appendix \ref{appendix1}) with the higher regularity, meanwhile for 
		the system \eqref{carleman_sys_thetahat} we will use some ideas of \cite{2018-robust-ns}. 
    	
    	Using Lemma \ref{lema.cerpaetal2015} with $f=\rho^*_{t}\theta$ and $u=\widehat\theta$, there exists a positive constant 
    	$C=C(\omega_0)$ such that 
    	\begin{equation}\label{carleman_ine_aux0}
        	I_1(e^{-2s\alpha},\widehat\theta)\leq C\Bigl(\int\limits_0^T\int\limits_{0}^1 e^{-2s\alpha}|\rho^*_{t}\theta|^2 dxdt
        	+s^7\lambda^8\int\limits_0^T\int\limits_{\omega_0}e^{-2s\alpha}\xi^7|\widehat\theta|^2 dxdt \Bigr),
    	\end{equation}
		for any $\lambda_1:=\lambda\geq C$ and $s\geq s_1$.
		
		Now, using the inequality $\frac{a^2}{2}-b^2\leq (a-b)^2$, for every $a,b\in \mathbb{R}$, with $a=\theta^*$
		 and $b=\tilde{\theta}$, we get (recall that $\widehat{\theta}=\theta^*-\tilde{\theta}$):
		 \begin{equation}\label{carleman_ine_aux1}
		 	\frac{1}{2}I_1(e^{-2s\alpha},\theta^*)-I_1(e^{-2s\alpha},\tilde\theta)\leq I_1(e^{-2s\alpha},\widehat\theta).
		 \end{equation}
    	 Since $s^7\lambda^8\xi^7e^{-2s\alpha}$ is upper bounded, it allows to estimate the terms involved in 
    	 $I_1(e^{-2s\alpha},\tilde\theta)$ using the regularity inequality \eqref{reg.high.linear} of Lemma
    	 \ref{apendix.lema.strong.linear}. In fact,  we have:
    	 \begin{equation}\label{carleman_aux2}
    		\begin{array}{ll}
   				I_1(e^{-2s\alpha},\tilde\theta)&\leq \displaystyle C_{s,\lambda}\|\tilde\theta\|^2_{L^2(0,T;H^4(0,1)\cap H_0^2(0,1))}\\
    			&\leq C_{s,\lambda}\|\rho^*g_2\|^2_{L^2(Q)}
    			+C_{s,\lambda}\|\rho^*(-\ell^{-2}\varphi1_{\mathcal{O}}+\gamma^{-2}\varphi)\|^2_{L^2(Q)}, 
    	\end{array}
		\end{equation}    
    	where $C_{s,\lambda}$ is a positive constant depending on $s$ and $\lambda$, i.e., $C_{s,\lambda}=Cs^7\lambda^8$.
    	
    	On the other hand, taking into account that $|\rho^*_{t}|\leq Cs\rho^*(\xi^*)^{7/2}$ for every $s\geq C$,
    	it follows that
    	\[\int\limits_0^T\int\limits_{0}^1 e^{-2s\alpha}|\rho^*_{t}\theta|^2 dxdt
    	\leq Cs^2\int\limits_0^T\int\limits_{0}^1 e^{-2s\alpha-2a_0s\widehat\alpha}(\widehat\xi)^{7}|\theta|^2 dxdt,\]
        which can be absorbed by the first term in the left--hand side of \eqref{carleman_ine_aux1}, for every 
    	$\lambda\geq 1,\, s\geq C$.
    	
    	Now, to estimate the local term that appear in the right--hand side of \eqref{carleman_ine_aux0}, we use the 
    	identity $\theta^*=\widehat\theta+\tilde\theta$ (recall \eqref{carleman_decompose}). Thus, we have 
    	\begin{equation}\label{carleman_aux4}
    	\begin{aligned}
        	s^7\lambda^8\int\limits_0^T\int\limits_{\omega_0}e^{-2s\alpha}\xi^7|\widehat\theta|^2 dxdt
        	&\leq Cs^7\lambda^8\int\limits_0^T\int\limits_{\omega_0}e^{-2s\alpha}\xi^7(|\tilde\theta|^2+|\theta^*|^2) dxdt\\
        	&\leq Cs^7\lambda^8\int\limits_0^T\int\limits_{\omega_0}e^{-2s\alpha}\xi^7|\theta^*|^2dxdt
        	+C_{s,\lambda}\|\rho^*g_2\|^2_{L^2(Q)}\\
        	&\hspace{1cm} +C_{s,\lambda}\|\rho^*(-\ell^{-2}\varphi1_{\mathcal{O}}+\gamma^{-2}\varphi)\|^2_{L^2(Q)}.    
    	\end{aligned}
		\end{equation}
		Putting together \eqref{carleman_ine_aux0}--\eqref{carleman_aux4}, we have for the moment
		\begin{equation}\label{carleman_end_step1}
    	\begin{array}{ll}
        	I_1(e^{-2s\alpha-2a_0s\widehat\alpha},\theta)&\leq Cs^7\lambda^8\displaystyle\int\limits_0^T\int\limits_{\omega_0}
        	e^{-2s\alpha-2a_0s\widehat\alpha}\xi^7|\theta|^2dxdt
        	+C_{s,\lambda}\|\rho^*g_2\|^2_{L^2(Q)}\\
        	&\hspace{1cm} +C_{s,\lambda}\|\rho^*(-\ell^{-2}\varphi1_{\mathcal{O}}+\gamma^{-2}\varphi)\|^2_{L^2(Q)},   	
        \end{array}
		\end{equation}
    	for every $s\geq C$ and $\lambda_1:=\lambda\geq C$.

	%%%%%%%%%%%%%%%%%%%%%%%%%%%%%%%%%%%%%%%%%%%%%%%%%%%%%%%%%%%%%%%%%%%%%%%%%%%%%%%%%%%%%%%%%%%%%
	%%%%%%%%%%%%% PROOF OF CARLEMAN ESTIMATE: VARPHI
	%%%%%%%%%%%%%%%%%%%%%%%%%%%%%%%%%%%%%%%%%%%%%%%%%%%%%%%%%%%%%%%%%%%%%%%%%%%%%%%%%%%%%%%%%%%%%%%
	\noindent{\bf{Carleman estimate for $\varphi$.}}
	First, assuming that $\theta$ is given, we look at $\varphi$ as the solution of
	\begin{equation}\label{carleman_adjoint_system}
    	\left\{
    	\begin{array}{lll}
    	\begin{array}{llll}
        	-\varphi_{t}+\varphi_{xxxx}+\varphi_{xx}-\overline{u}\varphi_x
        	=g_1+\theta1_{\mathcal{O}_d} & \text{in }Q,\\
       		\varphi(0,t)=\varphi(1,t)=\varphi_{x}(1,t)=\varphi_{x}(0,t)=0 & \text{on }(0,T),\\
        	\varphi(\cdot,T)= \varphi_T(\cdot)& \text{in }(0,1).
    	\end{array}
    	\end{array}
    	\right.
	\end{equation}
	Applying Lemma \ref{lema.cerpaetal2015} jointly with its remark \ref{obs.carleman.cerpaetal2015obs}
	for $f=g_1+\theta1_{\mathcal{O}_d}$ and the weight function
	$m_0\alpha$ (instead of $\alpha$), where $a_0+1<m_0\leq 2a_0$ and $m_0\leq 2+a_0$, we obtain
	\begin{equation}\label{carleman1step2}
	\begin{array}{lll}
		I_0(e^{-2m_0s\alpha},\varphi)&\leq C\Biggl(\displaystyle\int\limits_0^T\int\limits_{0}^1 e^{-4m_0s\breve\alpha
		+2m_0s\widehat\alpha}|g_1|^2 dxdt
        +\displaystyle\int\limits_0^T\int\limits_{\mathcal{O}_d}e^{-4m_0s\breve\alpha+2m_0s\widehat\alpha}|\theta|^2 dxdt\\
        &\hspace{1cm}+s^7\lambda^8\displaystyle\int\limits_0^T\int\limits_{\omega_0}e^{-2m_0s\alpha}\xi^7|\varphi|^2 dxdt 
        \Biggr),
	\end{array}
	\end{equation}
	for any $\lambda_2:=\lambda\geq C$ and $s\geq C$.
		
		By considering Lemma \ref{lema.relationship.weights} with  $\varepsilon=\frac{m_0-a_0-1}{m_0+a_0+1}$, 
    	$M_1=\frac{7}{2(m_0+a_0+1)}$ and $M_2=\frac{4}{(m_0+a_0+1)}$, the second term in the right--hand side 
    	of \eqref{carleman1step2} can be estimated by $I_1(e^{-2s\widehat\alpha-2a_0s\widehat\alpha},\theta)$
    	and therefore it can be absorbed by the left--hand side of \eqref{carleman_end_step1}.
		
	From \eqref{carleman_end_step1} and \eqref{carleman1step2}	we have
	\begin{equation}\label{c.theta.and.varphi}
    	\begin{aligned}
        	&I_1(e^{-2s\alpha-2a_0s\widehat\alpha},\theta)+I_0(e^{-2m_0s\alpha},\varphi)\\
        	&\leq Cs^7\lambda^8\int\limits_0^T\int\limits_{\omega_0}e^{-2m_0s\alpha}\xi^7|\varphi|^2 dxdt
        	+Cs^7\lambda^8\int\limits_0^T\int\limits_{\omega_0}e^{-2s\alpha-2a_0s\widehat\alpha}\xi^7|\theta|^2dxdt
        	+C_{s,\lambda}\|\rho^*g_2\|^2_{L^2(Q)}\\
        	&\hspace{1cm}+C\int\limits_0^T\int\limits_{0}^Le^{-4m_0s\breve\alpha+2m_0s\widehat\alpha}
        	|g_1|^2 dxdt +C_{s,\lambda}\|\rho^*(-\ell^{-2}\varphi1_{\mathcal{O}}+\gamma^{-2}\varphi)\|^2_{L^2(Q)},   	
        \end{aligned}
	\end{equation}
    	for any $\lambda_3:=\max\{\lambda_1,\lambda_2\}\geq C,\,s\geq C$ and $C_{s,\lambda}$ depending on $s,\lambda$.
    	
    	Taking $\ell$ and $\gamma$ large enough, i.e., $\ell,\gamma> C_1T^{14/10}e^{C_2/T^{4/5}}$, where $C_1,\,C_2$
    	are positive constants depending on $a_0,m_0, s$, we can absorb the last term in the right--hand side of
    	\eqref{c.theta.and.varphi} by the left--hand side.
    	
    	Finally, we should estimate the local term concerning $\theta$ in terms of $\varphi$. The idea is to use
    	the first equation of \eqref{carleman_adjoint_system} and the hypothesis $\omega\cap\mathcal{O}_d\neq \emptyset$,
    	where $\omega_0\subset\omega\subset \mathcal{O}_d$. Thus, we introduce an open set $\omega_1\subset\omega$ such that 
    	$\omega_0\subset\omega_1$ and a positive function  $\zeta\in C^4_c(\omega_1)$ such that $\zeta\equiv 1$ in $\omega_0$. 
    	Then, by using \eqref{carleman_adjoint_system} and after several integration by parts in time and space we get:
        \begin{equation*}
    	\begin{aligned}
        	J&=s^7\lambda^8\int\limits_0^T\int\limits_{\omega_0}e^{-2s\alpha-2a_0s\widehat\alpha}\xi^7|\theta|^2dxdt\\
        	&\leq Cs^7\lambda^8\int\limits_0^T\int\limits_{\omega_1}\zeta e^{-2s\alpha-2a_0s\widehat\alpha}\xi^7
        	(-\varphi_{t}+\varphi_{xxxx}+\varphi_{xx}-\overline{u}\varphi_x -g_1)\theta dxdt\\
        	&=C\Biggl( s^7\lambda^8\int\limits_0^T\int\limits_{\omega_1}\zeta (e^{-2s\alpha-2a_0s\widehat\alpha}\xi^7)_t
        	\varphi\theta dxdt\\
        	&+s^7\lambda^8\int\limits_0^T\int\limits_{\omega_1}\zeta e^{-2s\alpha-2a_0s\widehat\alpha}\xi^7
        	(\theta_t+\theta_{xxxx}+\theta_{xx}+(\overline{u}\theta)_x)\varphi dxdt\\
        	&+ s^7\lambda^8\int\limits_0^T\int\limits_{\omega_1}
        	(\zeta e^{-2s\alpha-2a_0s\widehat\alpha}\xi^7)_{xxxx}\varphi\theta dxdt
			+ s^7\lambda^8\int\limits_0^T\int\limits_{\omega_1}
        	(\zeta e^{-2s\alpha-2a_0s\widehat\alpha}\xi^7)_{xxx}\varphi\theta_{x} dxdt\\  
        	&+s^7\lambda^8\int\limits_0^T\int\limits_{\omega_1}
        	(\zeta e^{-2s\alpha-2a_0s\widehat\alpha}\xi^7)_{xx}(\varphi\theta_{xx}+\varphi\theta)dxdt\\
        	&+s^7\lambda^8\int\limits_0^T\int\limits_{\omega_1}
        	(\zeta e^{-2s\alpha-2a_0s\widehat\alpha}\xi^7)_{x}(\varphi\theta_{xxx}+\varphi\theta_{x}
       		+\overline{u}\varphi\theta)dxdt.       	
        	\Biggr)	
        \end{aligned}
	\end{equation*} 
		Now, using the estimates
		\[ |\partial_x^k(\zeta e^{-2s\alpha-2a_0s\widehat\alpha}\xi^7)|\leq Cs^k\lambda^k\xi^{8+\frac{5k}{2}}
		e^{-2s\alpha-2a_0s\widehat\alpha},\quad k=1,\dots,4,\]
		\[|\partial_t(e^{-2s\alpha-2a_0s\widehat\alpha}\xi^7)|\leq CTe^{-2s\alpha-2a_0s\widehat\alpha}\xi^{\frac{21}{2}},\]
		as well as the equation related to $\theta$ (see \eqref{sys.adjoint.coupled}) and the fact that 
		$\overline{u}\in L^\infty(0,T;W^{1,\infty}(0,1))$, the term $J$ can be estimated as follows:
		\begin{equation*}
    	\begin{aligned}
        	J&\leq C\Biggl( s^7\lambda^8\int\limits_0^T\int\limits_{\omega_1}\zeta e^{-2s\alpha-2a_0s\widehat\alpha}
        	\xi^{\frac{21}{2}}|\varphi||\theta| dxdt\\
        	&\quad+s^7\lambda^8\int\limits_0^T\int\limits_{\omega_1}\zeta e^{-2s\alpha-2a_0s\widehat\alpha}\xi^7
        	(g_2 -\ell^{-2}\varphi 1_{\mathcal{O}}+\gamma^{-2}\varphi )\varphi dxdt\\
        	&\quad+ s^{11}\lambda^{12}\int\limits_0^T\int\limits_{\omega_1}
        	e^{-2s\alpha-2a_0s\widehat\alpha}\xi^{18}|\varphi||\theta| dxdt
			+ s^{10}\lambda^{11}\int\limits_0^T\int\limits_{\omega_1}
        	 e^{-2s\alpha-2a_0s\widehat\alpha}\xi^{\frac{31}{2}}|\varphi||\theta_{x}| dxdt\\
        	&\quad+s^{9}\lambda^{10}\int\limits_0^T\int\limits_{\omega_1}
        	 e^{-2s\alpha-2a_0s\widehat\alpha}\xi^{13}|\varphi||\theta_{xx}| dxdt
        	+ s^{8}\lambda^{9}\int\limits_0^T\int\limits_{\omega_1}
        	e^{-2s\alpha-2a_0s\widehat\alpha}\xi^{\frac{21}{2}}|\varphi||\theta_{xxx}| dxdt    	
        	\Biggr),	
        \end{aligned}
		\end{equation*}
		with $C$ depending on $T$ and $\|\overline{u}\|_{L^\infty(0,T;W^{1,\infty}(0,1))}$.
		 
		Taking into account that  $\omega\cap\mathcal{O}=\emptyset$ and applying Young's inequality at each term of the 
		previous inequality, it is easy to deduce the following inequality:
		\begin{equation}\label{laststep.carleman}
    	\begin{aligned}
        	J&\leq \varepsilon I_{1}(e^{-2s\alpha-2a_0s\widehat\alpha},\theta)+
        	C(\varepsilon)s^{15}\lambda^{16}\int\limits_0^T\int\limits_{\omega_1}
        	e^{-2s\breve\alpha-2a_0s\widehat\alpha}(\widehat\xi)^{29}|\varphi|^2dxdt\\
        	&\quad+C(\varepsilon)s^{7}\lambda^{8}\int\limits_0^T\int\limits_{0}^1
        	e^{-2s\alpha-2a_0s\widehat\alpha}\xi^{7}|g_1|^2dxdt
        	+Cs^{7}\lambda^{8}\int\limits_0^T\int\limits_{0}^1
        	e^{-2a_0s\widehat\alpha}|g_2|^2dxdt\\
        	&\quad+Cs^{14}\lambda^{16}\int\limits_0^T\int\limits_{\omega_1}
        	e^{-4s\alpha-2a_0s\widehat\alpha}\xi^{14}|\varphi|^2dxdt,  	
        \end{aligned}
		\end{equation} 
		for every $s\geq C,\,\varepsilon>0,\, \ell>0$ and $\gamma$ large enough.
		
		From the definition of $\breve\alpha,\,\widehat{\alpha}$ and $\widehat{\xi}$ (see \eqref{carleman_weights}),
		the second term  in the right--hand side can estimate both the last term in the right--hand side
		and the first  term in the right--hand side of  \eqref{c.theta.and.varphi}. In fact, the first affirmation holds for every $s\geq1$,
		meanwhile the second one is a consequence of using Lemma \ref{lema.relationship.weights} with 
		$\varepsilon=((m_0-1)/a_{0})-1$ and $M_1=M_2=4/a_{0}$. Therefore, 
		from \eqref{c.theta.and.varphi}  and \eqref{laststep.carleman}, we conclude the proof of Proposition
		\ref{teo.carleman_carleman1}.
	\end{proof}

%%%%%%%%%%%%%%%%%%%%NULL CONTROLLABILITY
%%%%%%%%%%%%%%%%%%%%%%%%%%%%%%%%%%%%%%%%%%%%%%%%%%%%%%%%%%%%%%%%%%%%%%%
\subsection{Null controllability of the linearized system}
	In this subsection we will prove the null controllability for the coupled system \eqref{sys.linearized.coupled}
	with a right--hand side with external sources decreasing exponentially to zero when $t$ goes to $T$. In other words, we
	would like to find $h\in L^2(0,T;L^2(\omega))$ such that the solution of	
	\begin{equation}\label{eq.ks.controllineal}
	\begin{cases}
		u_{t}+u_{xxxx}+u_{xx}+\overline{u}u_x+\overline{u}_xu= f_1+
		h1_{\omega}+ (-\ell^{-2}1_{\mathcal{O}}+\gamma^{-2})z & \text{in }Q,\\
		-z_{t}+z_{xxxx}+z_{xx}-(u+\overline{u})z_{x}=f_2+(u-u_{d})1_{\mathcal{O}_{d}} & \text{in }Q,\\
		u(0,t)=u(1,t)=z(0,t)=z(1,t)=0 & \text{on }(0,T),\\
		u_{x}(0,t)=u_{x}(1,t)=z_{x}(0,t)=z_{x}(1,t)=0 & \text{on }(0,T),\\
		u(\cdot,0)=u_{0}(\cdot),\,\, z(\cdot,T)=0& \text{in }(0,1).
	\end{cases}
	\end{equation}
	satisfies
	\begin{equation}\label{cond.zero.linearcase}
		u(\cdot,T)=0\quad \mbox{in } (0,1), 	
	\end{equation}
	
	where the functions $f_1$ and $f_2$ are in appropriate weighted spaces. To this end, let us  first state a Carleman 
    inequality with weight functions not vanishing in $t=0$. Thus, let  $\tilde{\ell}\in C^1([0,T])$ be a positive 
    function in $[0,T)$ such that: 
    \begin{equation*}
    	\tilde{\ell}(t)=T^2/4\quad \forall 
  		t\in [0,T/2]\  \text{ and }\ \tilde{\ell}(t)=t(T-t)\quad \forall  t\in [T/2,T].
  	\end{equation*} 
  	Now, we introduce the following weight functions
	\begin{equation}\label{3.2.carleman.weights}
    \begin{array}{ll}
    	&\beta(x,t) = \dfrac{e^{7\lambda\|\eta\|_{\infty}}-e^{\lambda(2\|\eta\|_{\infty}+\eta(x))}} 
    	{\tilde{\ell}^{2/5}(t)},\quad \tau(x,t)=\dfrac{e^{\lambda(2\|\eta\|_{\infty}+\eta(x))}}{\tilde{\ell}^{2/5}
    	(t)},\\
    	&\widehat\beta(t) = \max\limits_{x\in[0,1]} \beta(x,t),\quad \quad
     	\breve\tau(t) = \min\limits_{x\in[0,1]} \tau(x,t),\\
    	&\breve\beta(t) = \min\limits_{x\in[0,1]} \beta(x,t),\quad 
    	\quad \,\,\,\,\widehat\tau(t) = \max\limits_{x\in[0,1]} \tau(x,t).
    \end{array}
	\end{equation}
	
	\begin{lemma}\label{3.2.lemma.carleman2}
		Let $s$ and $\lambda$ like in Theorem \ref{teo.carleman_carleman1}. Then, there exists a constant $C>0$
    	depending on $s,\lambda,\omega,T$ and  $\|\overline{u}\|_{L^\infty(0,T;W^{1,\infty}(0,1))}$, such that every solution 
    	$(\varphi,\theta)$ of \eqref{sys.adjoint.coupled} satisfies
    	\begin{equation}\label{3.2.ine.carleman2}
    	\begin{array}{ll}
       		\|\varphi(\cdot,0)\|^2_{L^2(0,L)} 
        		+\displaystyle\iint\limits_{Q}e^{-2m_0s\widehat\beta}(\breve\tau)^{7}|\varphi|^2dxdt\\  
        		\hspace{1cm}+\displaystyle\iint\limits_{Q}e^{-2(a_0+1)s\widehat\beta}(\breve\tau)^7|\theta|^2dxdt
        		+\displaystyle\iint\limits_{Q}e^{-4a_0s\widehat\beta}(\breve\tau)^7|\theta|^2dxdt\\
      		\leq C\Biggl(\displaystyle\iint\limits_{Q}e^{-2a_0s\widehat\beta}(\widehat\tau)^{7}|g_1|^2dxdt
        		+\displaystyle\iint\limits_{Q}e^{-2a_0s\widehat\beta}|g_2|^2dxdt
      		+\displaystyle\iint\limits_{\omega\times(0,T)}e^{-2s\breve\beta-2a_0s\widehat\beta}
       		 (\widehat\tau)^{29}|\varphi|^2dxdt\Biggr).
    	\end{array}
    	\end{equation} 
	\end{lemma}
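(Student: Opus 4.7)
The plan is the standard device for turning a two-sided Carleman estimate (with weights blowing up at both $t=0$ and $t=T$) into a one-sided estimate that controls $\|\varphi(\cdot,0)\|^2$. First I would observe that, by the definition of $\tilde{\ell}$, one has $\beta\equiv\alpha$ and $\tau\equiv\xi$ on $[T/2,T]$, while on $[0,T/2]$ the new weights $\widehat\beta,\breve\beta,\widehat\tau,\breve\tau$ are bounded above and below by positive constants depending only on $s,\lambda,T$. Consequently, the integrals on the right-hand side of \eqref{3.2.ine.carleman2} restricted to $[T/2,T]$ are dominated by the right-hand side of the Carleman inequality \eqref{carleman_obs_ineq1} in Proposition \ref{teo.carleman_carleman1}, while on $[0,T/2]$ they reduce, up to a multiplicative constant, to plain $L^2$--norms on $\omega\times(0,T/2)$ and $Q\cap\{t\le T/2\}$.

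Next, for the left-hand side I would split the time integrals as $(0,T)=(0,T/2)\cup(T/2,T)$. On $(T/2,T)$ the weights agree with those of Proposition \ref{teo.carleman_carleman1} and the four integrals of \eqref{3.2.ine.carleman2} are immediately absorbed in $I_1(e^{-2s\alpha-2a_0s\widehat\alpha},\theta)+I_0(e^{-2m_0s\alpha},\varphi)$ (choosing, if necessary, $m_0$ and $a_0+1$, $2a_0$ to match the powers on $\widehat\beta$ in the statement; the factor $(\breve\tau)^7$ is bounded from above there). On $(0,T/2)$ the weights are comparable to constants, so it is enough to bound $\|\varphi\|_{L^2(0,T/2;L^2(0,1))}^2$ and $\|\theta\|_{L^2(0,T/2;L^2(0,1))}^2$ plus $\|\varphi(\cdot,0)\|^2_{L^2(0,1)}$ by energy-type estimates for the adjoint system \eqref{sys.adjoint.coupled}.

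To obtain these energy estimates I would multiply the $\varphi$-equation by $\varphi$ and the $\theta$-equation by $\theta$, integrate in $x\in(0,1)$, and use the boundary conditions together with the bound $\overline u\in L^\infty(0,T;W^{1,\infty}(0,1))$ to absorb the transport and zero-order terms. A Grönwall argument, run backward from $t=T/2$ for $\varphi$ and forward from $t=0$ for $\theta$, yields
\[
\|\varphi(\cdot,0)\|_{L^2(0,1)}^2+\sup_{t\in[0,T/2]}\|\varphi(\cdot,t)\|_{L^2(0,1)}^2+\sup_{t\in[0,T/2]}\|\theta(\cdot,t)\|_{L^2(0,1)}^2
\]
bounded by $C\bigl(\|\varphi(\cdot,T/2)\|_{L^2(0,1)}^2+\|g_1\|_{L^2(Q)}^2+\|g_2\|_{L^2(Q)}^2+\ell^{-4}\|\varphi\|_{L^2(Q)}^2+\gamma^{-4}\|\varphi\|_{L^2(Q)}^2\bigr)$; the last two terms can be absorbed when $\ell,\gamma$ are large, exactly as in the proof of Proposition \ref{teo.carleman_carleman1}. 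Integrating then $\|\varphi(\cdot,t)\|_{L^2(0,1)}^2$ over $(T/2,3T/4)$, where the Carleman weight $e^{-2m_0s\alpha}$ is bounded below by a positive constant, we can estimate $\|\varphi(\cdot,T/2)\|_{L^2(0,1)}^2$ by $C\int_{T/2}^{3T/4}\int_0^1 e^{-2m_0s\alpha}|\varphi|^2\,dx\,dt$, which lives inside $I_0(e^{-2m_0s\alpha},\varphi)$ from Proposition \ref{teo.carleman_carleman1}.

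Putting these pieces together and bounding the right-hand side of \eqref{carleman_obs_ineq1} by the right-hand side of \eqref{3.2.ine.carleman2} (which is straightforward since on $[T/2,T]$ the two families of weights coincide, and on $[0,T/2]$ the weights on the right in \eqref{3.2.ine.carleman2} are bounded below by positive constants while those in \eqref{carleman_obs_ineq1} are trivially bounded above), I obtain \eqref{3.2.ine.carleman2}. The main technical nuisance that I expect is the careful bookkeeping of exponents and powers of $s,\lambda,\widehat\tau,\breve\tau$ so that the energy estimate on $[0,T/2]$ and the Carleman estimate on $[T/2,T]$ match when glued together; the use of the cut-off function $\tilde{\ell}$, designed precisely so that $\beta$ and $\alpha$ agree on the second half of $[0,T]$, is what makes this matching automatic.
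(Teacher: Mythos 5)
Your plan is correct and coincides with the argument the paper itself invokes: the paper omits the proof of Lemma \ref{3.2.lemma.carleman2}, stating only that it ``follows from classical energy estimates'' and pointing to \cite[Lemma 3.4]{2018-robust-ns}, and that classical argument is precisely your time-splitting at $t=T/2$ (Carleman estimate of Proposition \ref{teo.carleman_carleman1} on $[T/2,T]$ where $\beta\equiv\alpha$, coupled energy/Gr\"onwall estimates on $[0,T/2]$ where the new weights are bounded, and recovery of $\|\varphi(\cdot,T/2)\|^2$ from the weighted integral over $(T/2,3T/4)$). The exponent bookkeeping you flag as the main nuisance does close, since on $[T/2,T]$ one has $e^{-2(a_0+1)s\widehat\beta}\le e^{-2s\alpha-2a_0s\widehat\alpha}$ and $e^{-4a_0s\widehat\beta}\le e^{-2s\alpha-2a_0s\widehat\alpha}$ directly from $\widehat\alpha\ge\alpha$ and $a_0>1$ in \eqref{constantes}.
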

	\begin{proof}
		The proof follows from classical energy estimates and therefore it is omitted. 
		The interested reader might see for instance \cite[Lemma 3.4]{2018-robust-ns} for more details. 
	\end{proof}
	Now, we look for a solution of \eqref{eq.ks.controllineal} in an appropriate weighted functional 
	space. 
	
	Let us define the space $E$ as follows:
	\begin{equation*}
    \begin{array}{ll}
        E:=&\Bigl\{(u,z,h): e^{a_0s\widehat\beta}(\widehat\tau)^{-7/2}u\in L^2(Q), e^{a_0s\widehat\beta}z\in L^2(Q),\\
        &\hspace{5mm}e^{a_0s\widehat\beta+s\breve\beta}(\widehat\tau)^{-29/2}h1_{\omega}\in L^2(Q),\\ 
        &\hspace{5mm}e^{a_0s\widehat\beta}(\widehat\tau)^{-29/2}u\in L^2(0,T;H^2(0,1))\cap L^\infty(0,T;L^2(0,1)),\\
        &\hspace{5mm}e^{a_0s\widehat\beta}(\breve\tau)^{-c_0}z\in L^2(0,T;H^2(0,1))\cap L^\infty(0,T;L^2(0,1)),\,\, c_0\geq \frac{9}{2},\\
        &\hspace{5mm}e^{m_0s\widehat\beta}(\breve\tau)^{-7/2}(u_{t}+u_{xxxx}+u_{xx}+(\overline{u}u)_{x}
        	-h1_{\omega}-(-\ell^{-2}1_{\mathcal{O}}+\gamma^{-2})z)\in L^2(Q),\\
        &\hspace{5mm}e^{2a_0s\widehat\beta}(\breve\tau)^{-7/2}(-z_{t}+z_{xxxx}+z_{xx}-(u+\overline{u})z_{x}
        -(u-u_d)1_{\mathcal{O}_d})\in 
        L^2(Q)	
         \Bigr\}.
    \end{array}
	\end{equation*}

	\begin{proposition}\label{prop.null.control}
 		Assume the hypotheses of Lemma \ref{3.2.lemma.carleman2} and      	
 		\begin{equation}\label{3.3_condition_data}
        		u_0\in L^2(0,1),\, e^{m_0s\widehat\beta}(\breve\tau)^{-7/2}f_1\in L^2(Q),\,\,
        		e^{2a_0s\widehat\beta}(\breve\tau)^{-7/2}f_2\in L^2(Q),     
   		 \end{equation}
   		 \begin{equation}\label{hyp.ud}
   		 	\displaystyle\iint\limits_{\mathcal{O}_d\times(0,T)}\rho^2(t)|u_d|^2 dxdt<+\infty,	
   		 \end{equation}
		where $\rho=\rho(t)$ is a positive function blowing up $t=T$. 
    		Then, there exists a control $h\in L^2(0,T;L^2(\omega))$ such that the associated solution $(u,z,h)$ to
    		\eqref{eq.ks.controllineal} satisfies $(u, z, h)\in E$. 
	\end{proposition}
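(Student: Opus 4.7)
The plan is to apply the Fursikov--Imanuvilov penalized HUM duality approach based on the Carleman inequality of Lemma~\ref{3.2.lemma.carleman2}. In particular, I would work in a weighted dual space where the adjoint operator together with the local observation on $\omega$ induces a bilinear form that is coercive precisely thanks to \eqref{3.2.ine.carleman2}.

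I would first introduce a space $P_0$ of smooth pairs $(\varphi,\theta)$ satisfying the boundary conditions of \eqref{sys.adjoint.coupled} together with $\varphi(\cdot,T)=0$ and $\theta(\cdot,0)=0$. Denoting by $L^*_1(\varphi,\theta)$ and $L^*_2(\varphi,\theta)$ the left-hand sides of the two equations of \eqref{sys.adjoint.coupled} (so that on adjoint solutions these equal $g_1$ and $g_2$), on $P_0$ I would consider the bilinear form
\begin{equation*}
\begin{aligned}
a((\hat\varphi,\hat\theta),(\varphi,\theta)) &:= \iint_Q e^{-2a_0 s\widehat\beta}(\widehat\tau)^{7}\, L^*_1(\hat\varphi,\hat\theta)\, L^*_1(\varphi,\theta)\,dxdt \\
&\quad + \iint_Q e^{-2a_0 s\widehat\beta}\, L^*_2(\hat\varphi,\hat\theta)\, L^*_2(\varphi,\theta)\,dxdt \\
&\quad + \iint_{\omega\times(0,T)} e^{-2s\breve\beta-2a_0 s\widehat\beta}(\widehat\tau)^{29}\,\hat\varphi\,\varphi\,dxdt,
\end{aligned}
\end{equation*}
and the linear functional
\begin{equation*}
\langle L,(\varphi,\theta)\rangle := \int_0^1 u_0\,\varphi(\cdot,0)\,dx + \iint_Q f_1\,\varphi\,dxdt + \iint_Q f_2\,\theta\,dxdt - \iint_{\mathcal{O}_d\times(0,T)} u_d\,\theta\,dxdt.
\end{equation*}
Applying Lemma~\ref{3.2.lemma.carleman2} with $g_1=L^*_1(\varphi,\theta)$ and $g_2=L^*_2(\varphi,\theta)$ shows that $a$ is coercive on $P_0$; its completion then yields a Hilbert space $P$.

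To apply Lax--Milgram it remains to show that $L$ extends continuously to $P$. Each term of $L$ would be bounded by Cauchy--Schwarz against the left-hand side of \eqref{3.2.ine.carleman2}: the $u_0$ pairing by $\|\varphi(\cdot,0)\|_{L^2(0,1)}$; the $f_1,f_2$ pairings via the weighted hypotheses \eqref{3.3_condition_data} together with the bounds on $e^{-m_0 s\widehat\beta}(\breve\tau)^{7/2}\varphi$ and $e^{-2a_0 s\widehat\beta}(\breve\tau)^{7/2}\theta$; and the $u_d$ pairing by splitting $u_d\theta=(\rho u_d)(\rho^{-1}\theta)$ with $\rho$ chosen so that $\rho^{-1}\leq C e^{-2a_0 s\widehat\beta}(\breve\tau)^{7/2}$, which matches \eqref{hyp.ud} exactly. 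Lax--Milgram then furnishes a unique $(\hat\varphi,\hat\theta)\in P$ such that $a((\hat\varphi,\hat\theta),(\varphi,\theta))=\langle L,(\varphi,\theta)\rangle$ for every $(\varphi,\theta)\in P_0$. Setting
\begin{equation*}
\hat u := e^{-2a_0 s\widehat\beta}(\widehat\tau)^{7}\,L^*_1(\hat\varphi,\hat\theta),\quad \hat z := e^{-2a_0 s\widehat\beta}\,L^*_2(\hat\varphi,\hat\theta),\quad \hat h := -e^{-2s\breve\beta-2a_0 s\widehat\beta}(\widehat\tau)^{29}\,\hat\varphi\,1_\omega,
\end{equation*}
the optimality identity, reread through distributional integration by parts, is exactly the weak form of \eqref{eq.ks.controllineal}, and the basic weighted $L^2(Q)$ bounds required in $E$ on $\hat u$, $\hat z$ and $\hat h 1_\omega$ are controlled directly by $a((\hat\varphi,\hat\theta),(\hat\varphi,\hat\theta))^{1/2}$. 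The target condition $\hat u(\cdot,T)=0$ then follows from the blow-up of the weight $e^{a_0 s\widehat\beta}$ at $t=T$: were $\hat u(\cdot,T)$ nonzero, $\|e^{a_0 s\widehat\beta}(\widehat\tau)^{-29/2}\hat u\|_{L^\infty(0,T;L^2(0,1))}$ would be infinite.

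The main obstacle I anticipate is to upgrade these weighted $L^2$ bounds to the stronger $L^2(0,T;H^2(0,1))\cap L^\infty(0,T;L^2(0,1))$ regularity required in $E$. This should be done by a separate energy estimate on the forward system \eqref{eq.ks.controllineal}, multiplying its two equations by $e^{2a_0 s\widehat\beta}(\widehat\tau)^{-29}\hat u$ and $e^{2a_0 s\widehat\beta}(\breve\tau)^{-2c_0}\hat z$, integrating by parts via Lemma~\ref{apendix.lema.strong.linear}, controlling the time derivatives of the weights by higher powers of $\widehat\tau$, and absorbing the zero-order coupling terms $(-\ell^{-2}1_{\mathcal{O}}+\gamma^{-2})\hat z$ and $\hat u 1_{\mathcal{O}_d}$ using that $\ell,\gamma$ are large. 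The exponent $c_0\geq 9/2$ in the definition of $E$ is precisely what the weight losses in this last step force.
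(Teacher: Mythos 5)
Your proposal is correct and follows essentially the same route as the paper: the same Carleman-based variational (Fursikov--Imanuvilov/penalized HUM) construction, with the same bilinear form on $P_0$, coercivity from Lemma~\ref{3.2.lemma.carleman2}, Lax--Milgram on the completion $P$, the same formulas for $(\widehat u,\widehat z,\widehat h)$, and a final weighted-energy step (the paper invokes Lemma~\ref{lema.Nico.Mauro} on the system satisfied by $u^*=e^{a_0s\widehat\beta}(\widehat\tau)^{-29/2}\widehat u$ and $z^*=e^{a_0s\widehat\beta}(\breve\tau)^{-c_0}\widehat z$, which is exactly your proposed energy estimate) to get membership in $E$. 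The only cosmetic difference is that you place the $u_d$ source explicitly in the linear functional and use \eqref{hyp.ud} at the Lax--Milgram stage, whereas the paper absorbs $u_d$ into the affine operator $L_2$ and only invokes \eqref{hyp.ud} in the final regularity step.
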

\begin{proof}[Proof of Proposition \ref{prop.null.control}]
	Let us introduce the following constrained extremal problem:
   	\begin{equation}\label{prop.null.control.extremal.problem}
	 %\left\{
	 \begin{array}{lll}
	 \inf & \left\{
	\begin{array}{lll}
		&\displaystyle\frac{1}{2}\Bigl(\iint\limits_{Q}e^{2a_0s\widehat\beta}(\widehat\tau)^{-7}|u|^2dxdt
	 +\iint\limits_{Q}e^{2a_0s\widehat\beta}|z|^2dxdt\\
	 &\hspace{1cm}
	 +\displaystyle\iint\limits_{\omega\times(0,T)}e^{2(a_0s\widehat\beta+s\breve\beta)}
	 (\widehat\tau)^{-29}|h|^2dxdt\Bigr)
	\end{array}\right.\\
	 &\mbox{subject to}\,\, h\in L^2(Q),\,\, supp\, h\subset \omega\times (0,T),\,\,\mbox{and} \,\,
	 \eqref{eq.ks.controllineal}.\\
     &\end{array}
    %\right.
	 \end{equation}
	Assume that this problem admits a unique solution $(\widehat u, \widehat z,\widehat h)$. Then, 
	from Lagrange's principle there exists dual variables 
	$(\widehat\varphi, \widehat\theta)$ such that
	\begin{equation}\label{prop.nullcontrol.dualvariables}
	\begin{array}{llll}
	&\widehat u= e^{-2a_0s\widehat\beta}(\widehat\tau)^{7}
	(-\widehat\varphi_{t}+\widehat\varphi_{xxxx}+\widehat\varphi_{xx}-\overline{u}\widehat\varphi_x
      -\widehat\theta1_{\mathcal{O}_d})&\mbox{in}& Q,\\
	&\widehat z=e^{-2a_0s\widehat\beta}
	(\widehat{\theta}_{t}+\widehat{\theta}_{xxxx}+\widehat{\theta}_{xx}+(\overline{u}\widehat{\theta})_x
	-(-\ell^{-2}\chi_{\mathcal{O}}+\gamma^{-2})\widehat\varphi) &\mbox{in}& Q, \\
	&\widehat h=e^{-2(a_0s\widehat\beta+s\breve\beta)}(\widehat\tau)^{29}\widehat\varphi &\mbox{in}& Q,\\
	&\widehat u=\widehat z=0 &\mbox{on}& \{0,1\}\times(0,T).
	\end{array}
	\end{equation}
	Let us now set the space 
	\[P_0:\{(u,z)\in C^4(\overline{Q}): \partial_x^ku(0,t)=\partial_x^ku(1,t)= \partial_x^kz(0,t)=\partial_x^kz(1,t)=0,\quad 
	k=0,1\}.\] 
	as well as the bilinear form $a(\cdot,\cdot)$ over $P_0\times P_0$ defined by:
	\begin{equation*} 
	\begin{array}{ll}
		&
	   \displaystyle\iint\limits_{Q}e^{-2a_0s\widehat\beta}(\widehat\tau)^{7}
		(-\widehat\varphi_{t}+\widehat\varphi_{xxxx}+\widehat\varphi_{xx}-\overline{u}\widehat\varphi_x
      -\widehat\theta1_{\mathcal{O}_d})
	    (-w_{t}+w_{xxxx}+w_{xx}-\overline{u}w_x-z1_{\mathcal{O}_d})\, dxdt\\
	 &+\displaystyle\iint\limits_{Q}e^{-2a_0s\widehat\beta}(\widehat{\theta}_{t}+\widehat{\theta}_{xxxx}+\widehat{\theta}_{xx}+(\overline{u}\widehat{\theta})_x-(-\ell^{-2}\chi_{\mathcal{O}}+\gamma^{-2})\widehat\varphi)	    
		 (z_{t}+z_{xxxx}+z_{xx}+(\overline{u}z)_x)\\
	  &-\displaystyle\iint\limits_{Q}e^{-2a_0s\widehat\beta}(\widehat{\theta}_{t}+\widehat{\theta}_{xxxx}+\widehat{\theta}_{xx}+(\overline{u}\widehat{\theta})_x-(-\ell^{-2}\chi_{\mathcal{O}}+\gamma^{-2})
	   	 (-\ell^{-2}\chi_{\mathcal{O}}+\gamma^{-2}) w)\, dxdt\\
	 &+\displaystyle\iint\limits_{\omega\times(0,T)}e^{-2(a_0s\widehat\beta+s\breve\beta)}(\widehat\tau)^{29}
	 	\widehat\varphi w\,dxdt =:a((\widehat\varphi,\widehat\theta),(w,z)),
	\end{array}
	\end{equation*}
    for every $(w,z)\in P_0$,  and a linear form
    \begin{equation}\label{prop.null.control.def.lineal.form}
        \langle G, (w,z)\rangle:=\displaystyle\iint\limits_{Q}f_1\cdot w\, dxdt+
        \displaystyle\iint\limits_{Q}f_2\cdot z\, dxdt
        +\displaystyle\int\limits_{\Omega}u_0(\cdot)\cdot w(\cdot,0)\, dx. 
    \end{equation}
    Taking into account these definitions, one can see that, if the functions $\widehat u,\widehat z$ and $\widehat h$ solve
    \eqref{prop.null.control.extremal.problem}, we must have for every  $(w,z)$ in $P_0$
    \begin{equation}\label{prop.null.control.identity.bilineal.lineal}
        a((\widehat\varphi,\widehat\theta),(w,z))=\langle G, (w,z)\rangle. 
    \end{equation}
    Note that Carleman inequality \eqref{3.2.ine.carleman2} holds for all $(w,z)\in P_0$. 
    Consequently, 
    \begin{equation}\label{prop.nullcontrol.estimate1.bilinealform}
    \begin{array}{ll}
   		&\|w(\cdot,0)\|^2_{L^2(0,L)} 
        +\displaystyle\iint\limits_{Q}e^{-2m_0s\widehat\beta}(\breve\tau)^{7}|w|^2dxdt\\  
        &\hspace{1cm}+\displaystyle\iint\limits_{Q}e^{-2(a_0+1)s\widehat\beta}(\breve\tau)^7|z|^2dxdt
        +\displaystyle\iint\limits_{Q}e^{-4a_0s\widehat\beta}(\breve\tau)^7|z|^2dxdt\leq C a((w,z),(w,z)),
    \end{array}     
    \end{equation}
    for every $(w,z)\in P_0$.\\
    Therefore, it is easy to prove that $a(\cdot,\cdot):P_0\times P_0\longmapsto\mathbb{R}$  is symmetric, 
    definite positive bilinear form
    on $P_0$, so that, by defining $P$ as the completion of $P_0$ for the norm induced by $a(\cdot,\cdot)$ it implies
    that $a(\cdot,\cdot)$ is well--defined, continuous and again definite positive on $P$. In addition, from 
    Carleman inequality \eqref{3.2.ine.carleman2}, the hypothesis over the functions $f_1$ and $f_2$
    (see \eqref{3.3_condition_data}), and \eqref{prop.nullcontrol.estimate1.bilinealform}, the linear form 
    $(w,z)\longmapsto \langle G, (w,z)\rangle$  is well--defined and continuous on $P$. Indeed, 
    thanks to the relation among $a_, m_0$, see \eqref{constantes}, for every $(w,z)\in P$ we have
    \begin{equation*}
    \begin{aligned}
    	 \langle G, (w,z)\rangle  
        &\leq  \|e^{(a_0+1)s\widehat\beta}(\breve\tau)^{-7/2}f_1\|_{L^2(Q)}
        \|e^{-(a_0+1)s\widehat\beta}(\breve\tau)^{7/2}w\|_{L^2(Q)}\\
        &\hspace{0.5cm}+
        \|e^{m_0s\widehat\beta}(\breve\tau)^{-7/2}f_2\|_{L^2(Q)}\|e^{-m_0s\widehat\beta}(\breve\tau)^{7/2}z\|_{L^2(Q)}
        +\|u_0\|_{L^2(0,1)}\|w(0)\|_{L^2(0,1)}\\
        &\leq  \|e^{m_0s\widehat\beta}(\breve\tau)^{-7/2}f_1\|_{L^2(Q)}
        \|e^{-(a_0+1)s\widehat\beta}(\breve\tau)^{7/2}w\|_{L^2(Q)}\\
        & \hspace{0.5cm}
        +\|e^{2a_0s\widehat\beta}(\breve\tau)^{-7/2}f_2\|_{L^2(Q)}\|e^{-m_0s\widehat\beta}(\breve\tau)^{7/2}z\|_{L^2(Q)}
        +\|u_0\|_{L^2(0,1)}\|w(0)\|_{L^2(0,1)}.
    \end{aligned}
    \end{equation*} 
    Using \eqref{prop.nullcontrol.estimate1.bilinealform} and the density of $P_0$ in $P$, we find
    \begin{equation*}
        %\begin{array}{ll}
        \langle G, (w,z)\rangle
        \leq C\Bigl( \|e^{m_0s\widehat\beta}(\breve\tau)^{-7/2}f_1\|_{L^2(Q)}
        +\|e^{2a_0s\widehat\beta}(\breve\tau)^{-7/2}f_2\|_{L^2(Q)}+\|y_0\|_{L^2(0,1)}\Bigr)\|(w,z)\|_{P}.
        %\end{array}    
    \end{equation*}
    Hence, from Lax--Milgram's Lemma, there exists a unique 
    $(\widehat\varphi,\widehat\theta)\in P$ satisfying 
    \begin{equation}\label{prop.nullcontrol.estimate2.identity}
        a((\widehat\varphi,\widehat\theta),(w,z))=\langle G, (w,z)\rangle, \quad \forall  (w,z)\in P.     
    \end{equation}
    Let us set $(\widehat u, \widehat z,\widehat h)$ like in \eqref{prop.nullcontrol.dualvariables} and remark that 
    $(\widehat u,\widehat z,\widehat h)$ verifies
    \begin{equation}\label{aux56789}
    \begin{array}{ll}
      a((\widehat\varphi,\widehat\theta),
      (\widehat\varphi,\widehat\theta))= 
     \displaystyle\iint\limits_{Q}e^{2a_0s\widehat\beta}(\widehat\tau)^{-7}|\widehat u|^2dxdt
      \\
     +\displaystyle\iint\limits_{Q}e^{2a_0s\widehat\beta}|\widehat z|^2dxdt
     +\displaystyle\iint\limits_{\omega\times(0,T)}e^{2(a_0s\widehat\beta+s\breve\beta)}
	 (\widehat\tau)^{-29}|\widehat h|^2dxdt < +\infty.  
    \end{array}
    \end{equation}
    
    Let us prove that $(\widehat u,\widehat z)$ is the weak solution 
    of the coupled system \eqref{eq.ks.controllineal} for $h=\widehat h$. In fact, we introduce 
    the (weak) solution $(\tilde u,\tilde z)$ to the coupled system
    \begin{equation}\label{prop.nullcontrol.system.aux1}
	\begin{cases}
		 \tilde u_{t}+ \tilde u_{xxxx}+ \tilde u_{xx}+\overline{u} \tilde u_x+\overline{u}_x \tilde u= f_1+
		h1_{\omega}+ (-\ell^{-2}1_{\mathcal{O}}+\gamma^{-2}) \tilde z & \text{in }Q,\\
		- \tilde z_{t}+ \tilde z_{xxxx}+ \tilde z_{xx}-( \tilde u+\overline{u}) \tilde z_{x}=f_2
		+( \tilde u-u_{d})1_{\mathcal{O}_{d}} & \text{in }Q,\\
		 \tilde u(0,t)= \tilde u(1,t)= \tilde z(0,t)= \tilde z(1,t)=0 & \text{on }(0,T),\\
		 \tilde u_{x}(0,t)= \tilde u_{x}(1,t)= \tilde z_{x}(0,t)= \tilde z_{x}(1,t)=0 & \text{on }(0,T),\\
		 \tilde u(\cdot,0)= \tilde u_{0}(\cdot),\,\,  \tilde z(\cdot,T)=0& \text{in }(0,1).
	\end{cases}
	\end{equation}
    Clearly, $(\tilde u,\tilde z)$ is the unique solution of \eqref{prop.nullcontrol.system.aux1} defined
    by transposition. This means that, for every $(a,b)\in L^2(Q)^{2}$, 
    \begin{equation}\label{prop.nullcontrol.transposition1}
        \langle (\tilde u,\tilde z), (a,b)\rangle_{L^2(Q)^2}=\langle u_0,\varphi(0)\rangle_{L^2((0,1))}
        +\langle (f_1+\widehat h1_{\omega},f_2), (\varphi,\theta)\rangle_{L^2(Q)^2},
    \end{equation}
    where $(\varphi,\theta)$ is the solution to
    \begin{equation}\label{prop.nullcontrol.system.aux2}
    \left\{
        \begin{array}{lll}
        L^*(\varphi,\theta)=(a,b) &\text{ in } Q,\\
        \varphi(0,t)=\varphi(1,t)=\theta(0,t)=\theta(1,t)=0 & \text{on }(0,T),\\
		\varphi_{x}(0,t)=\varphi_{x}(1,t)=\theta_{x}(0,t)=\theta_{x}(1,t)=0 & \text{on }(0,T),\\
        \varphi(\cdot,T)= \varphi_T(\cdot),\, \theta(\cdot,0)=0& \text{in }(0,1)        
        \end{array}
    \right.
    \end{equation}
    and $L^*$ is the adjoint operator of $L$  given by:
    \[L(\tilde u,\tilde z)=(L_1(\tilde u,\tilde z), L_2(\tilde u,\tilde z)),\]
	with
	\[L_1(\tilde u,\tilde z)=\tilde u_{t}+ \tilde u_{xxxx}+ \tilde u_{xx}+\overline{u} \tilde u_x+\overline{u}_x 
    \tilde u- (-\ell^{-2}1_{\mathcal{O}}+\gamma^{-2})\tilde z\]
    and
    \[L_2(\tilde u,\tilde z)=- \tilde z_{t}+ \tilde z_{xxxx}+ \tilde z_{xx}-( \tilde u+\overline{u}) \tilde z_{x}
	-( \tilde u-u_{d})1_{\mathcal{O}_{d}}.\] 
	
    \noindent From \eqref{prop.nullcontrol.dualvariables} and \eqref{prop.null.control.identity.bilineal.lineal},
    we see that $(\widehat u,\widehat z)$ also satisfies \eqref{prop.nullcontrol.transposition1}. Then 
    $(\widehat u,\widehat z)=(\tilde u,\tilde z)$ is the weak solution to  
    \eqref{prop.nullcontrol.system.aux1}.
    
    Finally, we must see that $(\widehat u,\widehat z,\widehat h)\in E$. Observe that from \eqref{aux56789}, 
    we have that 
	\begin{equation*}
	e^{a_0s\widehat\beta}(\widehat\tau)^{-7/2}\widehat u,\,\, e^{a_0s\beta^*}\widehat z,\,\,\,
   	e^{(a_0s\widehat\beta+s\breve\beta)}(\widehat\tau)^{-29/2}\widehat h 1_{\omega}\in L^2(Q)
   	\end{equation*}
    and by hypothesis \eqref{3.3_condition_data}
	\begin{equation*}
		e^{m_0s\widehat\beta}(\breve\tau)^{-7/2}f_1\in L^2(Q)\quad\mbox{and}\quad 
		e^{2a_0s\widehat\beta}(\breve\tau)^{-7/2}f_2\in L^2(Q).
	\end{equation*}
	Thus, it only remains to check that
	\begin{equation*}
		e^{a_0s\widehat\beta}(\widehat\tau)^{-29/2}\widehat u, \,\, e^{a_0s\widehat\beta}(\breve\tau)^{-c_0}\widehat z
		\in L^2(0,T;H^2(0,1))\cap L^\infty(0,T;L^2(0,1)),
	\end{equation*}	
	where $c_0\geq \frac{9}{2}.$
	\begin{enumerate}
	\item [a)] We define the functions
	    \begin{equation*}
	    \begin{array}{llll}
	    	u^*:=e^{a_0s\widehat\beta}(\widehat\tau)^{-29/2}\widehat u, & & 
	    	z^*:=e^{a_0s\widehat\beta}(\breve\tau)^{-c_0}\widehat z\\
	    \end{array}
	    \end{equation*}
        and
        \begin{equation*}
        \begin{array}{llll}
        f^*_1:=e^{a_0s\widehat\beta}(\widehat\tau)^{-29/2}(f_1+h1_{\omega}), & &
        z^{**}:=e^{a_0s\widehat\beta}(\widehat\tau)^{-29/2}(-\ell^{-2}\chi_{\mathcal{O}}+\gamma^{-2})\widehat z\\
        f^*_2:=e^{a_0s\widehat\beta}(\breve\tau)^{-c_0} f_2, & & 
        u^{**}:=e^{a_0s\widehat\beta}(\breve\tau)^{-c_0}(u-u_d)\chi_{\mathcal{O}_d}. 
        \end{array}
        \end{equation*}
        Then $(u^*,z^*)$ satisfies:
        \begin{equation}\label{prop.nullcontrol.system.aux3}
        \begin{cases}
		 	u^*_{t}+ u^*_{xxxx}+u^*_{xx}+\overline{u}u^*_x+\overline{u}_xu^*= f_1^*+z^{**}
		 	+(e^{a_0s\widehat\beta}(\widehat\tau)^{-29/2})'\widehat u
			& \text{in }Q,\\
			- z^*_{t}+z^*_{xxxx}+ z^*_{xx}-(u^*+\overline{u})z^*_{x}=f_2^*+u^{**}
			 +(e^{a_0s\widehat\beta}(\breve\tau)^{-c_0})'\widehat z & \text{in }Q,\\
			 u^*(0,t)=u^*(1,t)= z^*(0,t)=z^*(1,t)=0 & \text{on }(0,T),\\
			 u^*_{x}(0,t)= u^*_{x}(1,t)= z^*_{x}(0,t)=z^*_{x}(1,t)=0 & \text{on }(0,T),\\
			 u^*(\cdot,0)=e^{a_0s\widehat\beta(0)}(\widehat\tau(0))^{-29/2}u_{0}(\cdot),\,\,
			 z^*(\cdot,T)=0& \text{in }(0,1).
		\end{cases}
    	\end{equation}
    
    \item [b)] Now, we prove that the right--hand side of the main equations in \eqref{prop.nullcontrol.system.aux3}
        is in $L^2(Q)$.
        \begin{itemize}
        \item $|e^{a_0s\widehat\beta}(\widehat\tau)^{-29/2}f_1|\leq 
                \leq Ce^{m_0s\widehat\beta}|\breve\tau|^{-7/2}|f_1|.$
        \item $|e^{a_0s\widehat\beta}(\widehat\tau)^{-29/2}h1_{\omega}|
                \leq Ce^{(a_0s\widehat\beta+s\breve\beta)}|\widehat\tau|^{-29/2}|h|1_{\omega}.$
        \item $|z^{**}|=|e^{a_0s\widehat\beta}(\widehat\tau)^{-29/2}(-\ell^{-2}\chi_{\mathcal{O}}+\gamma^{-2})\widehat z|
        		\leq Ce^{a_0s\widehat\beta}|\widehat{z}|.$
		\item $|(e^{a_0s\widehat\beta}(\widehat\tau)^{-29/2})'\widehat u|
				\leq C e^{a_0s\widehat\beta}|\widehat\tau|^{-7/2}|\widehat u|$.
        \item $|f_2^{*}|=|e^{a_0s\widehat\beta}(\breve\tau)^{-c_0} f_2|\leq 
        		Ce^{(a_0+1)s\widehat\beta}|\breve\tau|^{-c_0}|f_2|$.
        \item $|(e^{a_0s\widehat\beta}(\breve\tau)^{-c_0})'\widehat z|\leq Ce^{a_0s\widehat\beta}|\widehat{z}|$.
        \item Note that $u^{**}\in L^2(Q)$ thanks to the hypothesis \eqref{hyp.ud} and the fact that
        	$c_0\geq \frac{9}{2}$. Indeed,
        \begin{equation*}
        	\begin{array}{ll}
 				|u^{**}|&=|e^{a_0s\widehat\beta}(\breve\tau)^{-c_0}(\widehat u-u_d)\chi_{\mathcal{O}_d}|\\
        		&\leq Ce^{a_0s\widehat\beta}|\widehat{\tau}|^{-9/2}|\widehat{u}|+Ce^{a_0s\widehat\beta}
        		|\breve\tau|^{-c_0}|u_d|.
 			\end{array}
        \end{equation*}
		\end{itemize}
    \end{enumerate}
    Therefore, from  $a), b)$ and taking  $u_0\in L^2(0,1)$, we have 
    $u^*,z^*\in  L^2(0,T;H^2(0,1))\cap L^\infty(0,T;L^2(0,1))$ (see lemma \ref{lema.Nico.Mauro}).
    This concludes the proof of Proposition \ref{prop.null.control}.
\end{proof}

%%%%%%%%%%%%%%% Local exact controllability to trajectories %%%%%%%%%%%%%%%%%%%%%%%%%%%%%%%%%%%%%%%%%%%%%%%%
%%%%%%%%%%%%%%%%%%%%%%%%%%%%%%%%%%%%%%%%%%%%%%%%%%%%%%%%%%%%%%%%%%%%%%%%%%%%%%%%%%%%%%%%%%%%%%%%%%%%%%%%%%%%
\subsection{Local exact controllability to trajectories}	
	In this subsection we give the proof of Theorem \ref{teo2.RobustStackelbergContKS} through fixed point arguments.
	In order to apply the obtained results in the previous sections we consider the following change of variable.
	Let us set $w=u-\overline{u}$ and $w_d=u_d-\overline{u}$, where $\overline{u}=$ solves \eqref{intro.target.u}. 
	It is easy to verify that $w$ satisfies
	\begin{equation}\label{eq.ks.coupled.nonlinear.lastsystem}
	\begin{cases}
		w_{t}+w_{xxxx}+w_{xx}+(\overline{u}w)_x+w_xw= h1_{\omega}+(-\ell^{-2}1_{\mathcal{O}}+\gamma^{-2})z 
		& \text{in }Q,\\
		-z_{t}+z_{xxxx}+z_{xx}-(w+\overline{u})z_{x}=(w-w_{d})1_{\mathcal{O}_{d}} & \text{in }Q,\\
		w(0,t)=w(1,t)=z(0,t)=z(1,t)=0 & \text{on }(0,T),\\
		w_{x}(0,t)=w_{x}(1,t)=z_{x}(0,t)=z_{x}(1,t)=0 & \text{on }(0,T),\\
		w(\cdot,0)=(u_{0}-\overline{u}_0)(\cdot),\,\, z(\cdot,T)=0& \text{in }(0,1).
	\end{cases}
	\end{equation}
	Observe that these changes reduce our problem to a local null controllability for the solution $w$ of the nonlinear
	problem \eqref{eq.ks.coupled.nonlinear.lastsystem}.i.e., we are looking a control function $h$ and an associated
	solution  $(w,z)$ of \eqref{eq.ks.coupled.nonlinear.lastsystem} such that $w(\cdot,T)=0$ in $(0,1)$. 	
	To this end, we will apply an inverse function theorem of the Lyuternik's type \cite{1982Hamilton},	 
  	which will allow us to complete the proof of theorem \ref{teo2.RobustStackelbergContKS}. More precisely, we will 
  	use the following theorem.
    \begin{teo}\label{teo_inverse_mapping}
		Suppose that $\mathcal{B}_1,\mathcal{B}_2$ are Banach spaces and 
		\begin{equation*}\mathcal{A}:\mathcal{B}_1\to \mathcal{B}_2\end{equation*}
		is a continuously differentiable map. We assume that for $b_1^0\in \mathcal{B}_1, b_2^0\in \mathcal{B}_2$ the
		equality
		\begin{equation}\label{b_1^0}
			\mathcal{A}(b_1^0)=b_2^0
		\end{equation}
		holds and $\mathcal{A}'(b_1^0):\mathcal{B}_1\to \mathcal{B}_2$ is an epimorphism. Then there exists 
		$\delta >0$ such that for any $b_2\in \mathcal{B}_2$ which satisfies the condition 
		\begin{equation*}\|b_2^0-b_2\|_{\mathcal{B}_2}<\delta	\end{equation*}
		there exists a solution $b_1\in \mathcal{B}_1$ of the equation \begin{equation*}\mathcal{A}(b_1)=b_2.\end{equation*}
	\end{teo}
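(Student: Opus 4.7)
\medskip
\noindent\textbf{Proof proposal.} The plan is to establish the theorem as a Lyusternik--Graves type statement, combining the open mapping theorem with a Newton-like iteration. Since $\mathcal{A}'(b_1^0):\mathcal{B}_1\to\mathcal{B}_2$ is a continuous linear surjection between Banach spaces, the open mapping theorem provides a constant $M>0$ such that for every $y\in\mathcal{B}_2$ there exists (not necessarily unique) $x\in\mathcal{B}_1$ satisfying
\begin{equation*}
\mathcal{A}'(b_1^0)\,x=y,\qquad \|x\|_{\mathcal{B}_1}\le M\,\|y\|_{\mathcal{B}_2}.
\end{equation*}
This bounded selection replaces the bounded inverse used in the classical implicit function theorem, and it is what allows us to work with a mere epimorphism. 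Next, using continuous differentiability of $\mathcal{A}$, I would fix $r>0$ small enough so that
\begin{equation*}
\|\mathcal{A}'(b_1)-\mathcal{A}'(b_1^0)\|_{\mathcal{L}(\mathcal{B}_1,\mathcal{B}_2)}\le \tfrac{1}{2M},\qquad \forall\, b_1\in \overline{B}(b_1^0,r).
\end{equation*}

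Given $b_2\in\mathcal{B}_2$ with $\|b_2-b_2^0\|_{\mathcal{B}_2}<\delta$ (where $\delta$ is to be chosen), I would construct a sequence $\{b_1^n\}\subset\mathcal{B}_1$ by taking $b_1^0$ as in \eqref{b_1^0} and setting $b_1^{n+1}=b_1^n-x_n$, where $x_n$ is a selection with
\begin{equation*}
\mathcal{A}'(b_1^0)\,x_n=\mathcal{A}(b_1^n)-b_2,\qquad \|x_n\|_{\mathcal{B}_1}\le M\,\|\mathcal{A}(b_1^n)-b_2\|_{\mathcal{B}_2}.
\end{equation*}
The key estimate comes from the fundamental theorem of calculus: assuming $b_1^n,b_1^{n+1}\in\overline{B}(b_1^0,r)$,
\begin{equation*}
\mathcal{A}(b_1^{n+1})-b_2=\int_0^1\bigl[\mathcal{A}'(b_1^n-t\,x_n)-\mathcal{A}'(b_1^0)\bigr](-x_n)\,dt,
\end{equation*}
since the zeroth order term cancels by the defining relation for $x_n$. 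Hence
\begin{equation*}
\|\mathcal{A}(b_1^{n+1})-b_2\|_{\mathcal{B}_2}\le \tfrac{1}{2M}\|x_n\|_{\mathcal{B}_1}\le \tfrac{1}{2}\|\mathcal{A}(b_1^n)-b_2\|_{\mathcal{B}_2},
\end{equation*}
so the residuals decay geometrically and $\|b_1^{n+1}-b_1^n\|_{\mathcal{B}_1}\le M\,2^{-n}\|b_2^0-b_2\|_{\mathcal{B}_2}$.

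Finally, choosing $\delta$ with $2M\delta<r$ I would verify inductively that all iterates remain inside $\overline{B}(b_1^0,r)$, so the previous estimates are valid at every step. The geometric control on $\|b_1^{n+1}-b_1^n\|$ implies $\{b_1^n\}$ is Cauchy, hence converges to some $b_1\in\overline{B}(b_1^0,r)\subset\mathcal{B}_1$, and continuity of $\mathcal{A}$ together with $\|\mathcal{A}(b_1^n)-b_2\|\to 0$ yields $\mathcal{A}(b_1)=b_2$. The main obstacle I anticipate is purely bookkeeping: ensuring that the induction step keeping $b_1^n\in\overline{B}(b_1^0,r)$ is compatible with the quantitative choices of $M$, $r$ and $\delta$, so that the iteration never leaves the ball where the derivative estimate holds. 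Once those constants are tuned consistently, the argument closes and uniqueness is not claimed (which is consistent with $\mathcal{A}'(b_1^0)$ being only surjective rather than bijective).
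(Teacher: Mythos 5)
Your argument is correct, and it is worth noting that the paper does not actually prove this statement: Theorem \ref{teo_inverse_mapping} is quoted as a known inverse mapping theorem of Lyusternik type with a citation to Hamilton's survey on the Nash--Moser theorem, so there is no in-paper proof to compare against. What you supply is the standard Graves/Lyusternik iteration, and it closes. The two ingredients are exactly the right ones: the open mapping theorem upgrades surjectivity of $\mathcal{A}'(b_1^0)$ to a (nonlinear, non-unique) bounded right selection with some constant $M$, which is precisely what lets you dispense with injectivity, and continuity of $b_1\mapsto\mathcal{A}'(b_1)$ at $b_1^0$ gives the $\tfrac{1}{2M}$ bound on a closed ball of radius $r$. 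Your telescoping identity is right (the zeroth-order term does cancel against $\mathcal{A}'(b_1^0)x_n=\mathcal{A}(b_1^n)-b_2$), and the bookkeeping you flag as the main obstacle does close: with the induction hypothesis that $b_1^k\in\overline{B}(b_1^0,r)$ and $\|\mathcal{A}(b_1^k)-b_2\|_{\mathcal{B}_2}\le 2^{-k}\|b_2^0-b_2\|_{\mathcal{B}_2}$ for all $k\le n$, the selection bound alone (which needs no localization) gives $\|b_1^{n+1}-b_1^0\|_{\mathcal{B}_1}\le\sum_{k=0}^{n}M2^{-k}\|b_2^0-b_2\|_{\mathcal{B}_2}<2M\delta\le r$, so $b_1^{n+1}$ and, by convexity, the whole segment joining $b_1^n$ to $b_1^{n+1}$ lie in $\overline{B}(b_1^0,r)$; the mean value estimate then yields the residual bound at step $n+1$ and the induction advances. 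Completeness of $\mathcal{B}_1$ and continuity of $\mathcal{A}$ finish the proof exactly as you say, and correctly no uniqueness is claimed. The only points worth making explicit in a final write-up are the derivation of the bounded selection from the open mapping theorem (the image of the unit ball contains some $B_{\mathcal{B}_2}(0,c)$, then rescale, giving $M=2/c$) and the fact that the fundamental theorem of calculus for $C^1$ maps between Banach spaces is being used in its Bochner-integral (or Hahn--Banach reduced) form.
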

	Before starting the proof of Theorem \ref{teo2.RobustStackelbergContKS}, small data must be considered in 
	our analysis. Thus, we impose that
	\begin{equation}\label{ine.smalldata}
		\|e^{m_0s\widehat\beta}(\breve\tau)^{-7/2}f_1\|_{L^2(Q)}	+\|e^{2a_0s\widehat\beta}(\breve\tau)^{-7/2}f_2\|_{L^2(Q)}
		+\|w(\cdot,0)\|_{L^2(0,1)}+\displaystyle\iint\limits_{\mathcal{O}_d\times(0,T)}\rho^2(t)|w_d|^2 dxdt \leq \delta,
	\end{equation}
	where $\delta$ is a small positive number and $\rho=\rho(t)$ is a positive function blowing up $t=T$.
	\begin{proof}[Proof of Theorem \ref{teo2.RobustStackelbergContKS}]
		We apply Theorem \ref{teo_inverse_mapping} for the spaces $\mathcal{B}_1:=E$ and 
	\begin{equation*}\mathcal{B}_2:=\{(f_1,f_2,w_0)\in X_1\times X_2\times L^2(0,1):
	 f_1,f_2, w_0\,\,
	 \mbox{satisfy} \,\,\,\eqref{ine.smalldata}\},\end{equation*}
	where 
	$X_1:= L^2(e^{m_0s\widehat\beta}(\breve\tau)^{-7/2}(0,T);L^2(0,1))$ and 
	\begin{equation*}X_2:=L^2(e^{2a_0s\widehat\beta}(\breve\tau)^{-7/2}(0,T);L^2(0,1)).\end{equation*}
	We define the operator $\mathcal{A}$ by the formula
	\begin{equation*}
	\begin{array}{l}
		\mathcal{A}(w,z,h) :=\Bigl(w_{t}+w_{xxxx}+w_{xx}+(\overline{u}w)_x+w_xw-h1_{\omega}
		-(-\ell^{-2}1_{\mathcal{O}}+\gamma^{-2})z,\\
		\hspace{2.5cm}-z_{t}+z_{xxxx}+z_{xx}-(w+\overline{u})z_{x}-(w-w_d)1_{\mathcal{O}_d}, w(\cdot,0)\Bigr),
	\end{array}
	\end{equation*}
	for every $(w,z,h)\in \mathcal{B}_1$. 
	
	Let us see that $\mathcal{A}$ is of class $C^1(\mathcal{B}_1, \mathcal{B}_2)$. Indeed, notice that all the
	terms in $\mathcal{A}$ are linear, except for $ww_x$ and $wz_x$. Thus, we only
	have to check that these nonlinear terms are well--defined and depend continuously on the data. Thus, we will
	prove that the bilinear operator 
	\begin{equation*}((w^1,z^1),(w^2,z^2))\longmapsto w^1w_x^2	
	\end{equation*}
	is continuous from $Z\times Z$ to $X_1$, and the bilinear form 
	\begin{equation*}
		((w^1,z^1),(w^2,z^2))\longmapsto w^1z_x^2
	\end{equation*}
	is continuous from $Z\times Z$ to $X_2$, and where 
	\[Z:=\Bigl\{y: e^{a_0s\widehat\beta}(\widehat\tau)^{-c_1}y\in L^2(0,T;H^2(0,1))\cap L^\infty(0,T;L^2(0,1)),\,\, c_1>\frac{29}{2}\Bigr\}.\]
	
	In fact, for any $w^1,w^2\in X_1$ we have
	\begin{equation*}
	\begin{array}{ll}
		\|w^1w^2_x\|_{X_1}&=\|e^{m_0s\widehat\beta}(\breve\tau)^{-7/2}w^1w_x^2\|_{L^2(Q)}\\
		&\leq C \|e^{a_0s\widehat\beta}(\widehat\tau)^{-7/4}w^1 e^{a_0s\widehat\beta}
		(\widehat\tau)^{-7/4}w^2_x\|_{L^2(Q)}\\
		&\leq C\|e^{a_0s\widehat\beta}(\widehat\tau)^{-29/2}w^1e^{a_0s\widehat\beta}(\widehat\tau)^{-29/4}w^2_x\|_{L^2(Q)}\\
		&\leq C\|e^{a_0s\widehat\beta}(\widehat\tau)^{-29/2}w^1\|_{L^\infty(0,T;L^2(0,1))}
			\|e^{a_0s\widehat\beta}(\widehat\tau)^{-29/4}w^2_x\|_{L^\infty(0,T;L^2(0,1))}\\
		&\leq C\|w^1\|_Z\|w^2\|_Z.	
	\end{array}
	\end{equation*}
	On the other hand, for $c_1> \frac{29}{2}$ and any $w^1,z^2\in X_2$ , we have
	\begin{equation*}
	\begin{array}{ll}
		\|w^1z^2_x\|_{X_2}&=\|e^{a_0s\widehat\beta}(\widehat\tau)^{-7/4}w^1 
		e^{a_0s\widehat\beta}z^2\|_{L^2(Q)}\\
		&\leq C\|e^{a_0s\widehat\beta}(\widehat\tau)^{-29/2}w^1e^{a_0s\widehat\beta}(\widehat\tau)^{-c_0}z^2_x\|_{L^2(Q)}
		\\
		&\leq C\|w^1\|_Z\|z^2\|_Z.	
	\end{array}
	\end{equation*}

	Notice that $\mathcal{A}'(0,0,0):\mathcal{B}_1\to \mathcal{B}_2$ is given by
	\[
	(w_{t}+w_{xxxx}+w_{xx}+(\overline{u}w)_x-h1_{\omega}-(-\ell^{-2}1_{\mathcal{O}}+\gamma^{-2})z,
	-z_{t}+z_{xxxx}+z_{xx}-\overline{u}z_{x}-(w-w_d)1_{\mathcal{O}_d}, w(\cdot,0)),\]
	for all\,\,$ (w,z,h)\in \mathcal{B}_1.$
	In virtue of Proposition \ref{prop.null.control}, this functional satisfies 
	$Im (\mathcal{A}'(0,0,0))=\mathcal{B}_2$.
	
	Let $b_1^0=(0,0,0)$ and $b_2^0=(0,0,w_0)$. Then equation \eqref{b_1^0} holds.
	So all necessary conditions to apply Theorem \ref{teo_inverse_mapping} are 
	fulfilled. Therefore there exists a positive number $\delta$ such that, if $(w_0,w_d)$ satisfy the inequality
	\eqref{ine.smalldata}, we can find a control $h\in L^2(0,T;L^2(\omega))$ 
	and an associated solution $(w,z)$ to \eqref{eq.ks.coupled.nonlinear.lastsystem} satisfying $w(\cdot,T)=0$ in
 	$(0,1)$. 
	This finishes the proof of Theorem \ref{teo2.RobustStackelbergContKS}.
\end{proof}

\subsection{Numerical framework} 
	This  section is devoted to present numerical experiments on the RSC problem. 
	which was proved at the above section. 
	In other words, we show approximations to Problem \ref{p3.robustandstackelberg} and
	thereby to Problem \ref{p2.stackelberg} (without disturbance signal, i.e., $\psi\equiv 0$). Our approach given in subsection 
	\ref{section.numerical.method} will be used and completed for tackling these problems. We focus our attention in solving 
	the following extremal problem:
	
	\begin{equation}\label{eq.extremalproblem.robuststackelberg}
	 \begin{aligned}
	 &\inf \displaystyle\frac{1}{2}\iint\limits_{\omega\times(0,T)}|h|^2dxdt,\quad
	 \mbox{subject to}\,\, h\in L^2(Q),\,\, supp\, h\subset \omega\times (0,T),\,\,\mbox{and}\\
     	&
        \begin{cases}
        u_{t}+u_{xxxx}+u_{xx}+uu_{x}=h1_{\omega}+\ell^{-2}1_{\mathcal{O}}z-\gamma^{-2}z &\mbox{ in } (0,1)\times(0,T),\\
        -z_{t}+z_{xxxx}+z_{xx}-uz_{x}=(u-u_d)1_{\mathcal{O}_d}&\mbox{ in } (0,1)\times(0,T),\\
        u(0,t)=u(1,t)=u_{x}(0,t)=u_x(1,t)=0 &\mbox{ on }(0,T),\\
        z(0,t)=z(1,t)=z_{x}(0,t)=z_x(1,t)=0 &\mbox{ on }(0,T),\\
		u(\cdot,0)=u_{0}(\cdot),\,\,  u(\cdot,T)=\overline{u}(\cdot,T),\,\, z(\cdot, T)=0& \mbox{ in }(0,1).
        \end{cases}
        	&\end{aligned}
	 \end{equation}
	 
	 Using optimal control techniques, we consider a  regularization to the functional given in 
	\eqref{eq.extremalproblem.robuststackelberg} as follows:
	\begin{equation}\label{eq:Robustfunctional}
		{\mathcal G}(h)=\frac{\beta}{2}\displaystyle\int\limits_{0}^1|u(x,T)-\overline{u}(x,T)|^{2}dx
		+\frac{1}{2}\displaystyle\iint\limits_{{\mathcal \omega}\times(0,T)}|h|^{2}dxdt,\quad \beta>0.
	\end{equation}
	To optimize  \eqref{eq:Robustfunctional}, a Lagrangian formulation might be developed. Thus, the coupled 
	adjoint system $(\varphi^1,\varphi^2)$ associated to 
	\eqref{eq.extremalproblem.robuststackelberg} is given by
	
	\begin{equation}\label{eq.AdjoindCouplesystem}
        \begin{cases}
		-\varphi^1_{t}+\varphi^1_{xxxx}+\varphi^1_{xx}-u\varphi^1_{x}=-\varphi^21_{\mathcal{O}_{d}}+z_{x}\varphi^2 
		 &\mbox{ in } (0,1)\times(0,T),\\
		\varphi^2_{t}+\varphi^2_{xxxx}+\varphi^2_{xx}+(u\varphi^2)_{x}=\ell^{-2}\varphi^{1}1_{\mathcal{O}}
		-\gamma^{-2}\varphi^1 &\mbox{ in } (0,1)\times(0,T),\\
		\varphi^1(0,t)=\varphi^1(1,t)=\varphi^1_{x}(0,t)=\varphi^1_{x}(1,t)=0  &\mbox{ on }(0,T),\\
		\varphi^2(0,t)=\varphi^2(1,t)=\varphi^2_{x}(0,t)=\varphi^2_{x}(1,t)=0  &\mbox{ on }(0,T),\\
		\varphi^1(x,T)=-\beta(u(\cdot,T)-\overline{u}(\cdot,T)),\,\,\varphi^2(x,0)=0 & \text{ in }(0,1).
        \end{cases}
	\end{equation}
	A simple computation allows us to deduce the following expression:
	\[\frac{\partial {\mathcal G}}{\partial h}(h)=h-\varphi^1(u(h),z(h)).\]
	
	In the algorithm \ref{algorithm2.rsc} we describe the required steps for solving  the problem
	\eqref{eq.extremalproblem.robuststackelberg}. Some remarks on this algorithm are given below.  
	
	\begin{algorithm}[ht]\label{algorithm2.rsc}
	\SetAlgoLined{} 
	\KwIn {Initialize a $h^0$  on $t\in [0,T]$.}
	For $n\geq 0$. 
	
	\textbf{STEP1:} Compute: $u^{n},z^{n}$ solution of the system
		\begin{equation}\label{primal.system.u.z}
		\begin{array}{ll}\left\{
        		\begin{array}{llll}
       		 u_{t}+u_{xxxx}+u_{xx}+uu_{x}=h^{n}1_{\omega}+\ell^{-2}1_{\mathcal{O}}z-\gamma^{-2}z &\text{ in }& (0,1)\times(0,T),\\
        		-z_{t}+z_{xxxx}+z_{xx}-uz_{x}=(u-u_d)1_{\mathcal{O}_d}&\text{ in }& (0,1)\times(0,T),\\
        		u(0,t)=u(1,t)=u_{x}(0,t)=u_x(1,t)=0 &\text{ on }&(0,T),\\
        		z(0,t)=z(1,t)=z_{x}(0,t)=z_x(1,t)=0 &\text{ on }&(0,T),\\
		u(\cdot,0)=u_{0}(\cdot),\,\,  u(\cdot,T)=\overline{u}(\cdot,T),\,\, z(\cdot, T)=0& \text{ in }&(0,1).
        		\end{array}
        		\right.
		\end{array}
		\end{equation}
		
	\textbf{STEP2:} Compute $\varphi^{1,n},\,\varphi^{2,n}$ using the system
		\begin{equation}\label{dual.system.varphi1varphi2}\left\{
       		 \begin{array}{llll}
		-\varphi^{1}_{t}+\varphi^1_{xxxx}+\varphi^1_{xx}-u^n\varphi^1_{x}=-\varphi^21_{\mathcal{O}_{d}}+z^n_{x}\varphi^2 
		 &\text{ in }& (0,1)\times(0,T),\\
		\varphi^2_{t}+\varphi^2_{xxxx}+\varphi^2_{xx}+(u^n\varphi^2)_{x}=\ell^{-2}\varphi^{1}1_{\mathcal{O}}
		-\gamma^{-2}\varphi^1 &\text{ in }& (0,1)\times(0,T),\\
		\varphi^1(0,t)=\varphi^1(1,t)=\varphi^1_{x}(0,t)=\varphi^1_{x}(1,t)=0  &\text{ on }&(0,T),\\
		\varphi^2(0,t)=\varphi^2(1,t)=\varphi^2_{x}(0,t)=\varphi^2_{x}(1,t)=0  &\text{ on }&(0,T),\\
		\varphi^1(x,T)=-\beta(u^n(\cdot,T)-\overline{u}(\cdot,T)),\,\,\varphi^2(x,0)=0 & \text{ in }&(0,1)
        		\end{array}\right.
		\end{equation}
		
	\textbf{STEP3:} Compute 
		\[\frac{\partial {\mathcal G}}{\partial h}(h^n)=h^{n}-\varphi^{1,n}(u(h^n),z(h^n)).\]	
			
	\textbf{STEP4:} Find $\alpha\in \mathbb{R}^{+}$ such that
		\[\min\limits_{\alpha\in \mathbb{R}^{+}}\mathcal{G}\Bigl(h^n-\alpha\frac{\partial {\mathcal G}}{\partial h}(h^n)
		\Bigr).\] 
		
	\textbf{STEP5:} Set 
		\[h^{n+1}=h^n-\alpha\frac{\partial {\mathcal G}}{\partial h}(h^n).\] 
		
	\textbf{STEP6:} If $\|\frac{\partial {\mathcal G}}{\partial h}(h^n)\|_{L^2(Q)}\leq tol$, set $h=h^{n+1}$. Otherwise, return to STEP1.
	
	\caption{Robust Stackelberg controllability algorithm to the problem \eqref{eq.extremalproblem.robuststackelberg}}
	\end{algorithm}
	
	\begin{Obs}\smallskip\
		\begin{itemize}
		\item  We highlight that the algorithm 1 associated to the robust control problem
		 must be used in the STEP1 of algorithm \ref{algorithm2.rsc} to find a numerical solution of \eqref{primal.system.u.z}.
		   
		\item   Respect to the STEP 2, observe that \eqref{dual.system.varphi1varphi2} corresponds to a linear model, 
		whose implementation is carried out with the conjugate gradient method (CGM) for coupled system, which is 
		inspired in the book \cite{book.glowinski-lions}. Due to the linearity of  \eqref{dual.system.varphi1varphi2}, 
		we mention that the CGM shows
		a better convergence than method proposed in \cite{2002iterative-tachim,2000-bewleytemamziane}. 
		However, this analysis is omitted in this paper because it is far away of our main goals.  
		
		\item  On the STEP 4, we have used the nonlinear gradient conjugate method \cite{1998Lucquin}. 
		As mentioned in subsection \ref{section.numerical.method}, a FreeFem algorithm
		 on nonlinear optimization is used for the implementation.
		 \end{itemize}
	\end{Obs}
	
	Now, we present some numerical examples related to the robust Stackelberg controllability problem given in 
	\eqref{eq.extremalproblem.robuststackelberg}. We set the parameter $\beta=10^{-7}$  meanwhile the trajectory
	is the function $\overline{u}(x,t)=0$.  Again,  $\Omega=(-30,30)$. In Figures \ref{fig1.rsc.emptyset},  \ref{fig2.rsc.emptyset}
	we take configurations in which the intersection of the
	 sub--domain $\omega$  (for the leader control) 
	and the sub--domain $\mathcal{O}$ (for the follower control) is the empty set. Additionally, we bring a numeric response 
	to the case $\omega\cap \mathcal{O}\neq \emptyset$, see Figures  \ref{fig1.rsc.nonemptyset},\ref{fig2.rsc.nonemptyset}.
	Recall that in Theorem \ref{teo2.RobustStackelbergContKS},
	the geometrical condition $\omega\cap \mathcal{O}=\emptyset$ is a sufficient hypothesis, and used in section on
	Carleman estimates.
	   
	\begin{figure}[ht!]
	\begin{center}
		\includegraphics[scale=0.179]{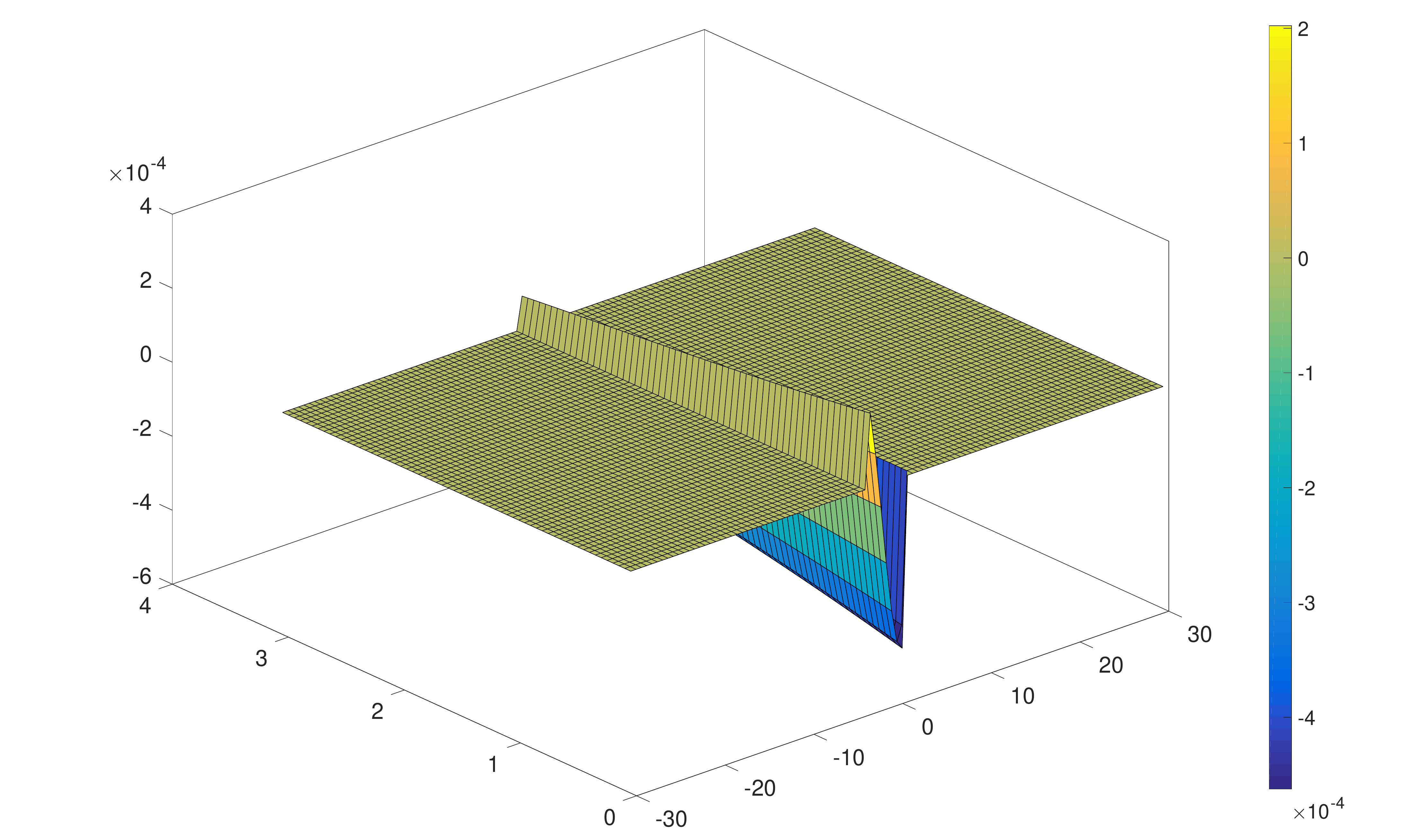} 
		\includegraphics[scale=0.179]{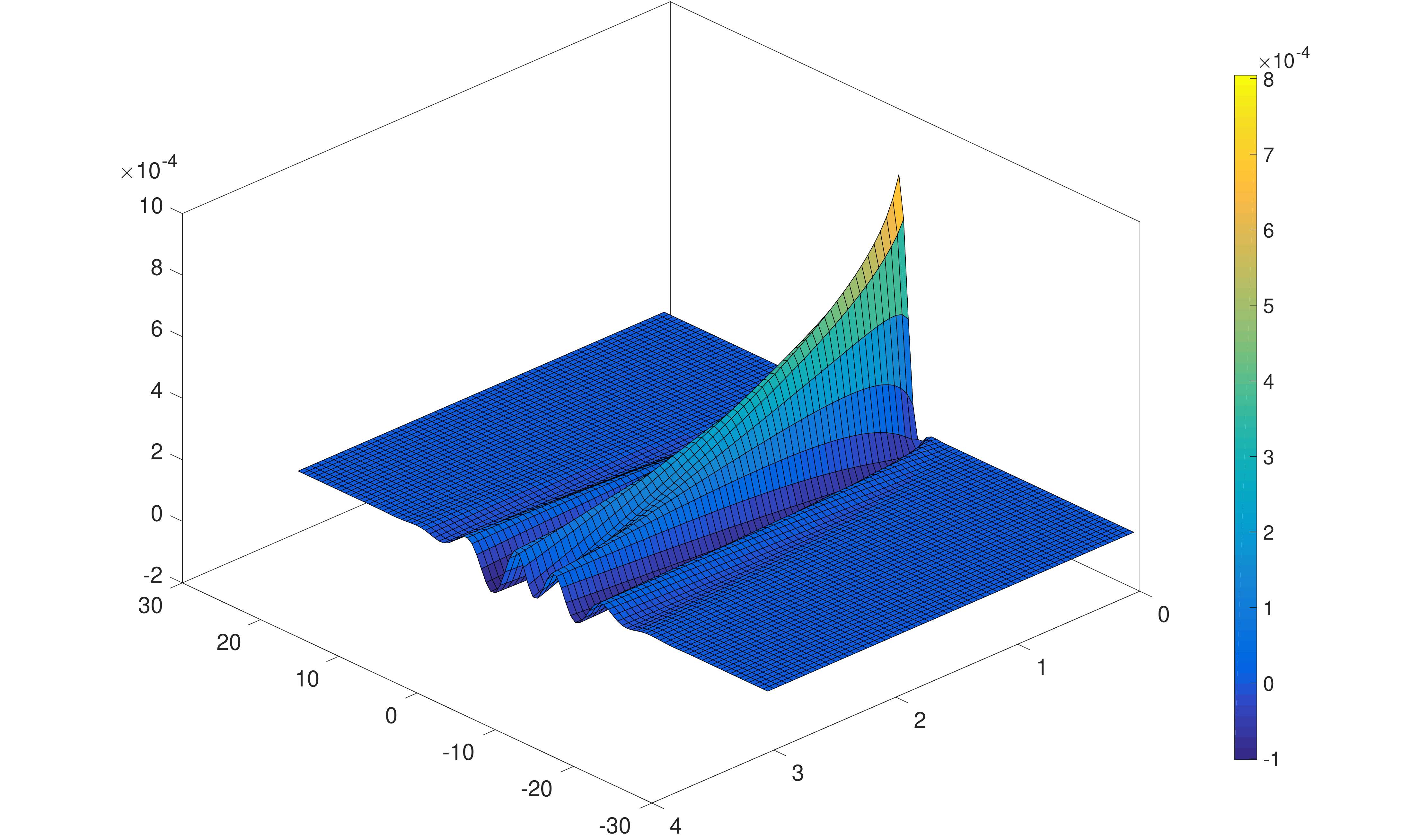}   			
	\end{center}
	\par
	\caption{Lider function (left) and state (right). $T=3s, N=100, \Delta t=2\times 10^{-2}$,  
	$\ell=\gamma=40$. Domains $\omega=(-3,1)$ and $\mathcal{O}=(2,5)$, initial datum $u_0(x)=10^{-3}\exp{(-x^2)}$.}
	\label{fig1.rsc.emptyset}
	\end{figure}	
	
	\begin{figure}[ht!]
	\begin{center}
		\includegraphics[scale=0.17]{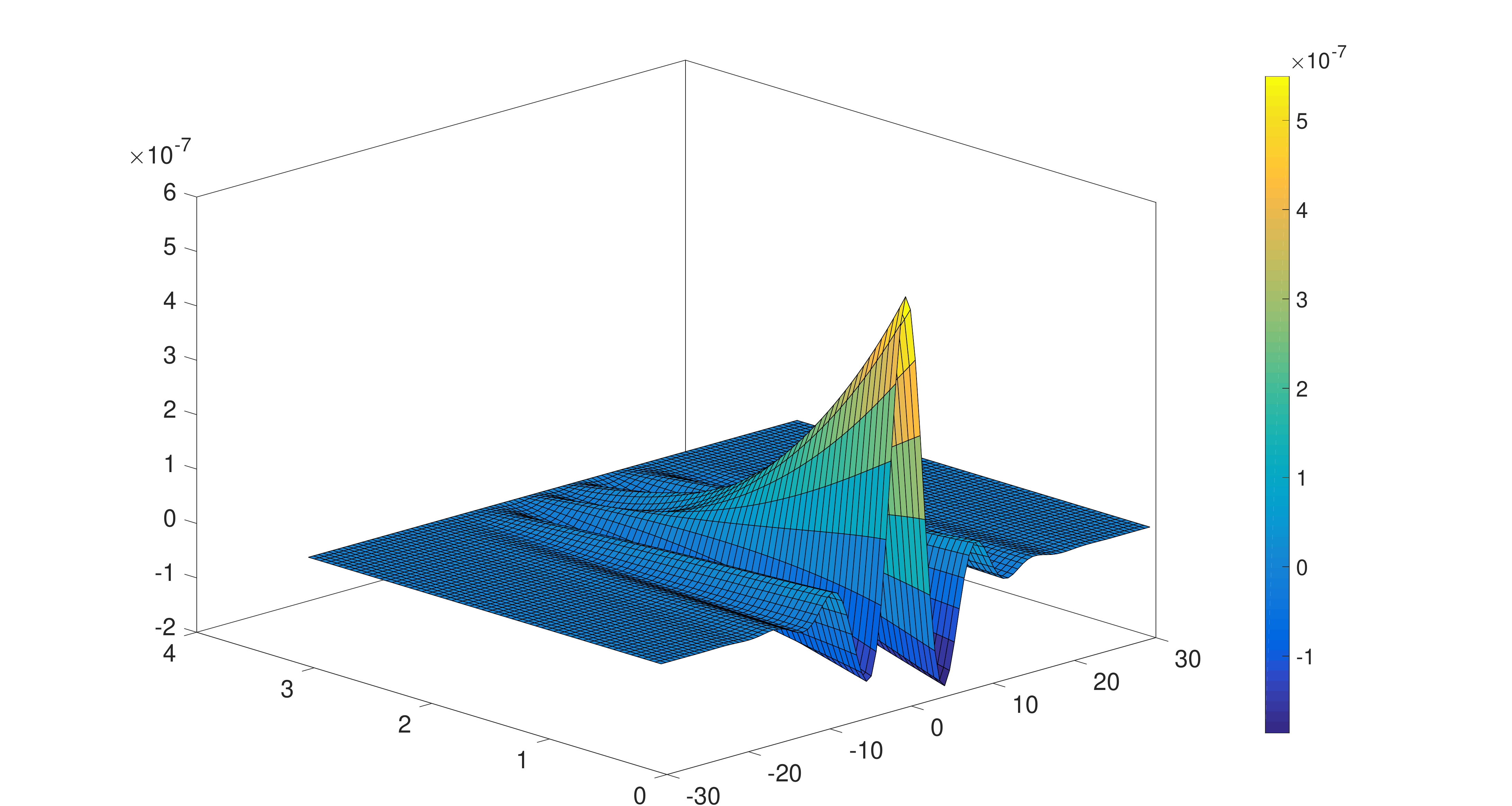} 
		\includegraphics[scale=0.17]{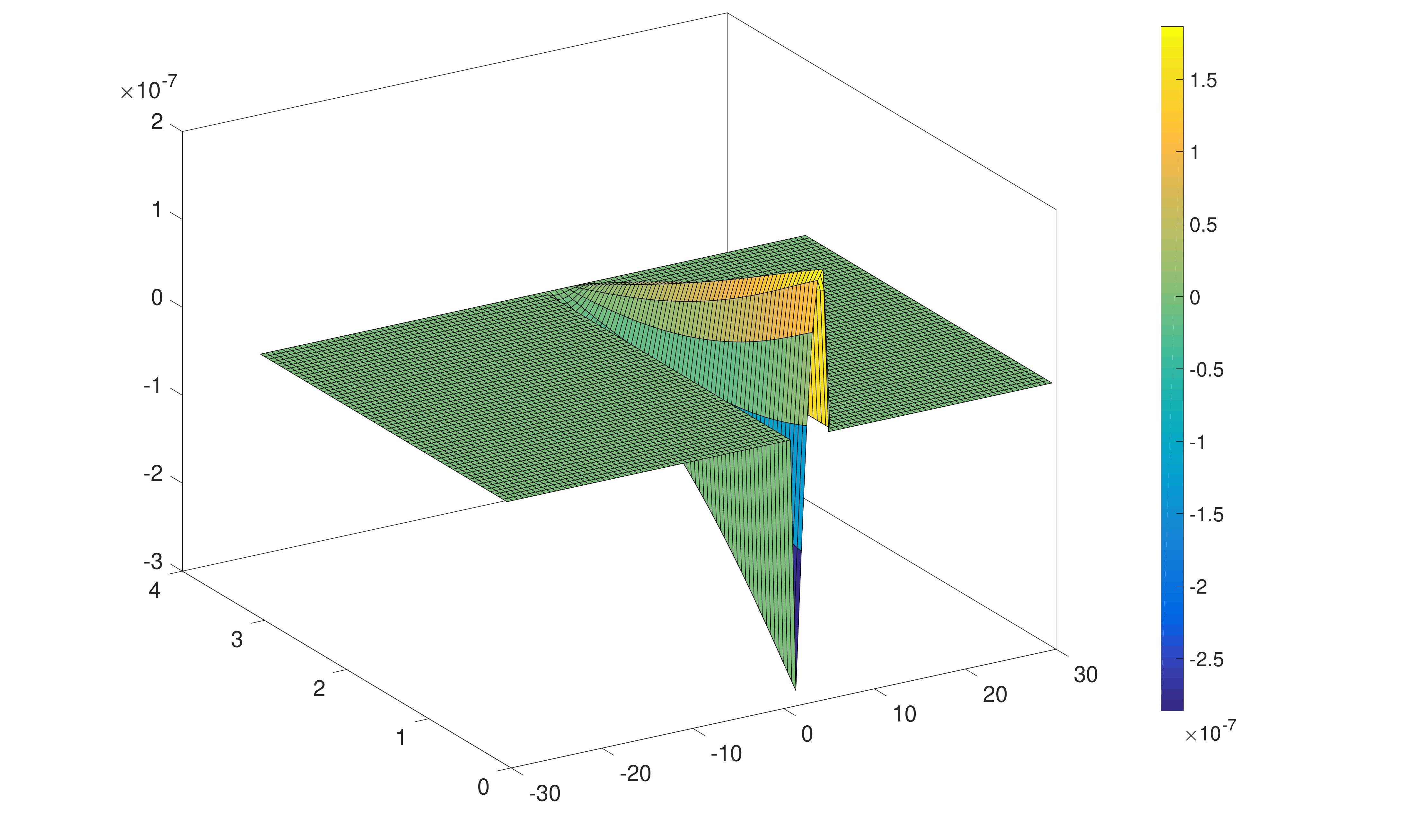}   			
	\end{center}
	\par
	\caption{Disturbance (left) function and follower (right). $T=3s, N=100, \Delta t=2\times 10^{-2}$, 
	$\ell=\gamma=40$. Domains $\omega=(-3,1)$ and $\mathcal{O}=(2,5)$, initial datum $u_0(x)=10^{-3}\exp{(-x^2)}$.}
	\label{fig2.rsc.emptyset}
	\end{figure}	

	\begin{figure}[ht!]
	\begin{center}
		\includegraphics[scale=0.16]{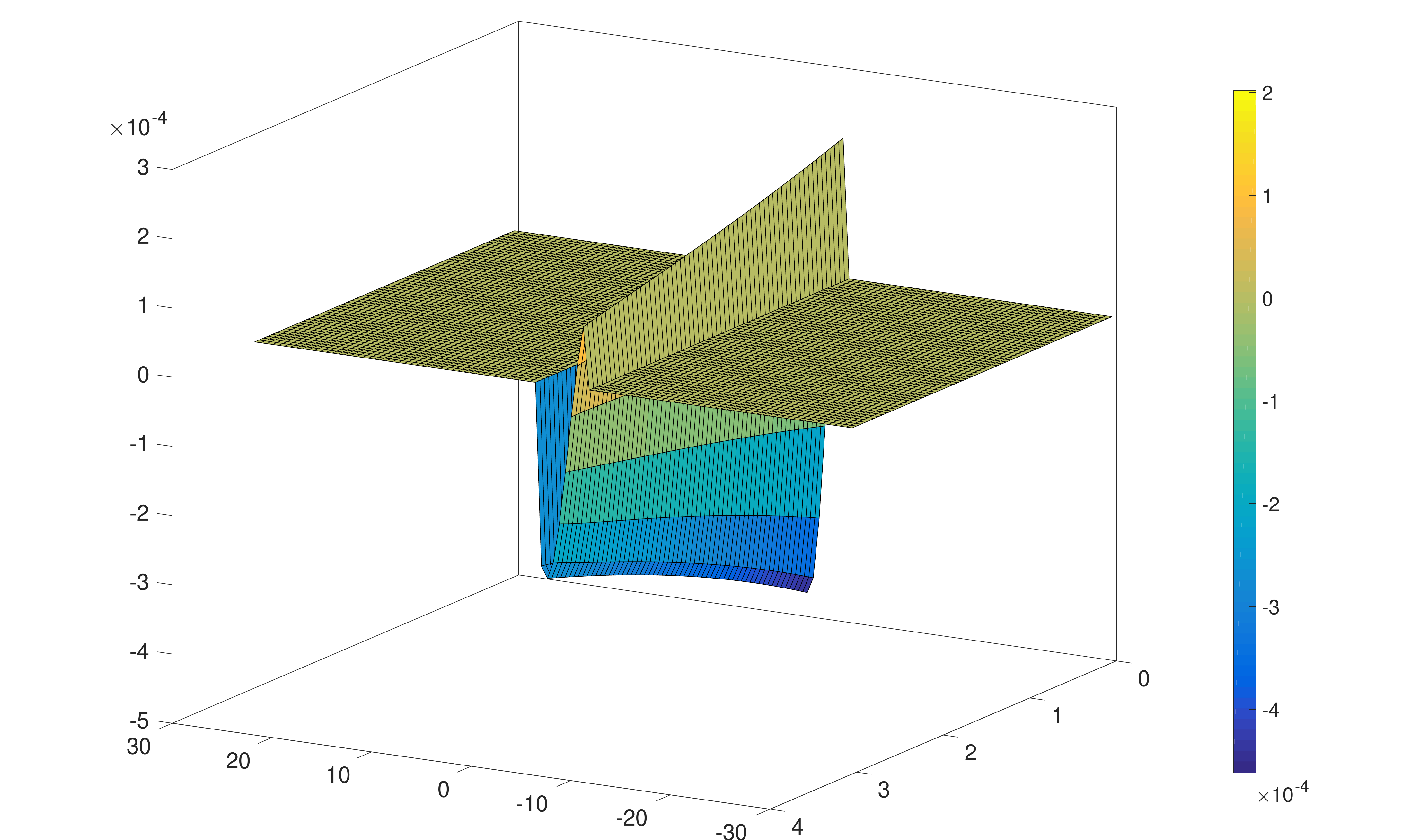} 
		\includegraphics[scale=0.16]{Image/RSC/state-rsc-ex2-eps-converted-to.pdf}   			
	\end{center}
	\par
	\caption{ Lider (left) and state (right).  $T=3s, N=100, \Delta t=2\times 10^{-2}$, 
	$\ell=\gamma=40$. Domains $\omega=(-3,1)$ and $\mathcal{O}=(-1,3)$, initial datum $u_0(x)=10^{-3}\exp{(-x^2)}$.}
	\label{fig1.rsc.nonemptyset}
	\end{figure}	
	
	\begin{figure}[ht!]
	\begin{center}
		\includegraphics[scale=0.16]{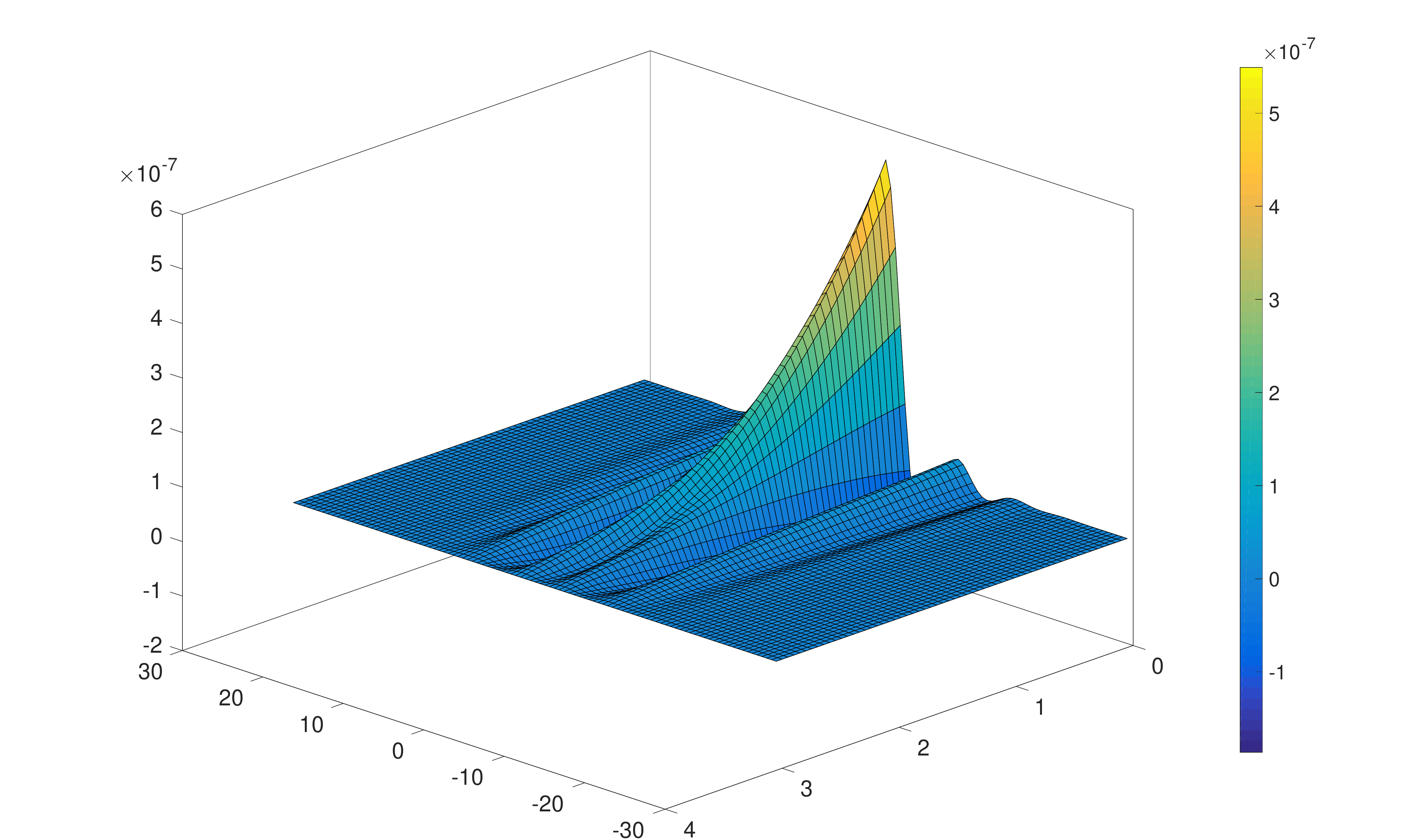} 
		\includegraphics[scale=0.16]{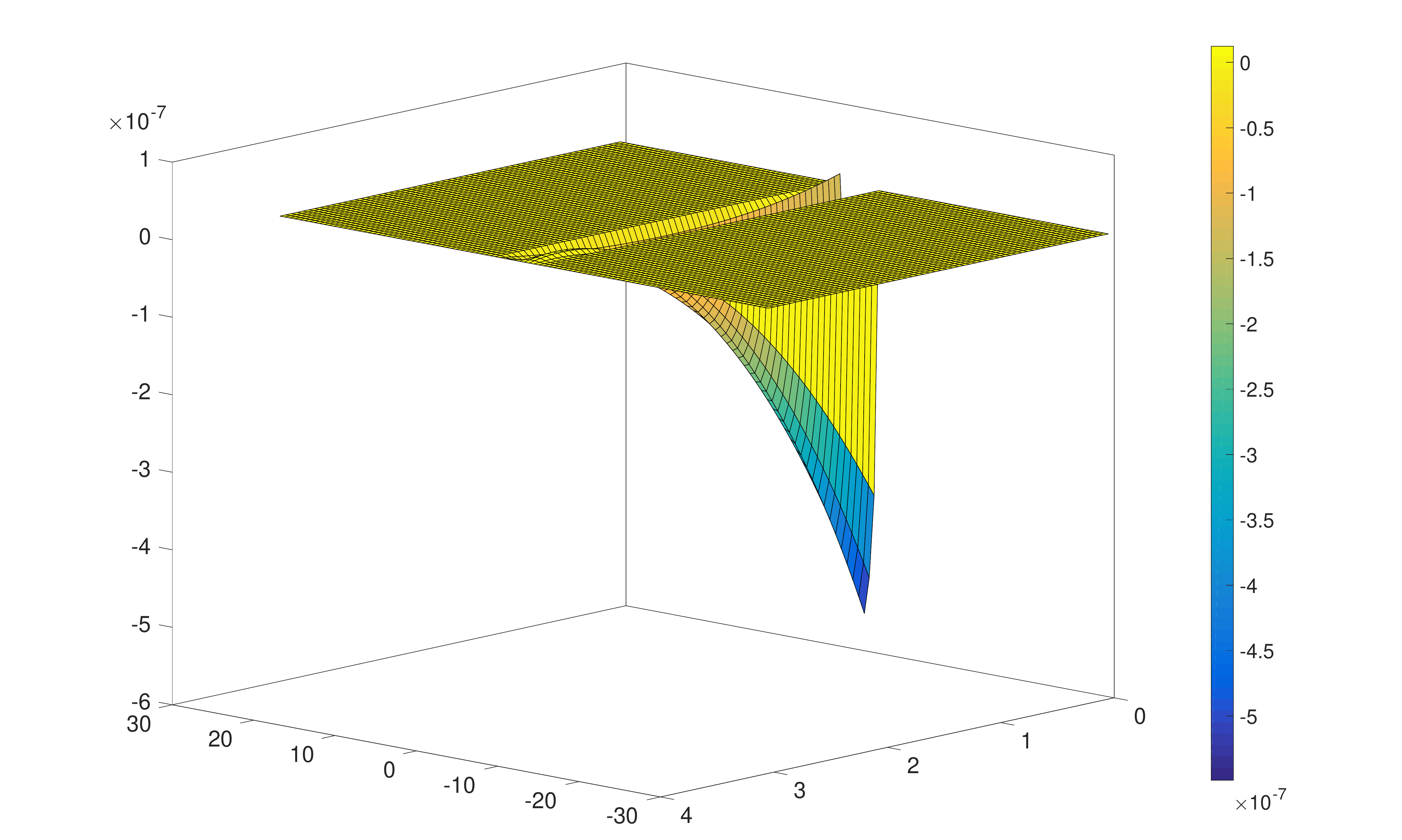}   			
	\end{center}
	\par
	\caption{Disturbance function (left) and follower (right). $T=3s, N=100, \Delta t=2\times 10^{-2}$,  
	$\ell=\gamma=40$. Domains $\omega=(-3,1)$ and $\mathcal{O}=(-1,3)$, initial datum $u_0(x)=10^{-3}\exp{(-x^2)}$.}
	\label{fig2.rsc.nonemptyset}
	\end{figure}
	
\section{Comments and open problems}

\noindent	In this paper, we have considered the robust Stackelberg controllability problem  for the KS equations. However, there are several comments and open questions that are worth mentioning.
	\begin{itemize}
	\item The robustness of a nonlinear KS equation posed in a bounded domain is achieved by using
		optimal control theory that allows us to guarantee the existence, uniqueness and also characterization of a 
		saddle point for the system \eqref{eq.ks}. 
		
		To our knowledge, this paper contains the first numerical description concerning the robustness process for the KS
		equation. Due to the high--order in space (i.e., fourth order derivates), an appropriate change of variable  
		is used to implement low--order
		finite elements, more precisely, $\mathbb{P}_1$--type  Lagrange elements, meanwhile, a 
		$\theta$--scheme/Bashforth method was  used for the
		time discretization. Although this paper  does not present an exhaustive numerical analysis of our method, 
		since it is far way
		of the main goals, several configurations to the  time--space discretization displayed good results for the error
		in the $L^2$--norm and $L^\infty$--norm, see Table  \ref{table.errorKS1}.
		Besides, from the algorithms presented in  \cite{2000-bewleytemamziane,2002iterative-tachim} for the 
		Navier--Stokes system, we proposed new iterative schemes of constructing the 
		ascent and descent directions.  
		
	\item In this paper we present the robust stackelberg controllability (RSC) problem for the KS equation, that is, 
		once we have obtained the robust pair $(\bar{v},\bar\psi)$, we proved the exact controllability to 
		the trajectories for the leader control $h$. A direct consequence is the Stackelberg strategy
		between the leader $h$ and the follower $v$.  
		From a theoretical perspective, the main novelties  are new Carleman inequalities and its relationship with the 
		robustness parameters  $\ell$ and $\gamma$, see Proposition \ref{teo.carleman_carleman1} and Proposition
		\ref{prop.null.control}. 
		
		Numerically, some approximate solutions to the RSC problem
		are presented by implementing the algorithm \ref{algorithm2.rsc}.  
		In addition, by considering the geometrical condition between the leader and the follower, i.e., 
		$\omega\cap \mathcal{O}=\emptyset$, the 
		numerical examples allow us to visualize that such an condition (sufficient condition in Theorem \ref{teo2.RobustStackelbergContKS})
		could be removed in some sense, that means, $\omega\cap \mathcal{O}\neq\emptyset$ could proceed by using perhaps 
		another strategy. 
		
	\item  It would be interesting to study the case of a cooperative game 
	between the leader control $h$ and the follower control $v$, that is, to analyze the case in which 
	\[(\mbox{leader domain } h\,\,\cap\,\,\mbox{follower control } v)\neq\emptyset.\]
	
	\item Another problem consists in the possibility of extending the notion of a robust control to several inputs, for example, 
	instead of a control $v$ and a disturbance $\psi$, to take several control $v_1,\cdots, v_N$ and several 
	disturbance signals $\psi_1,\cdots, \psi_M$, $M, N\in\mathbb{N}$. 	
	
	\item In the same spirit of this paper, the extension of our main results (Theorem \ref{teo1.saddlepoint}
	and Theorem \ref{teo2.RobustStackelbergContKS}) to its model in higher dimensional, that is, biharmonic--type
	equations could be interesting. 
	
	\item Finally, efficient numerical schemes always presenting a challenge to overcome in each problem.
		
	\end{itemize}

\section{Appendix}\label{appendix1}
	In this appendix we mention the well--posedness results we used in this paper for both linearized and 
	nonlinear equations. First, in order to consider external sources with lower regularity in space, we define 
	solution by transposition for the linearized KS equation. Let us define	
	\[\mathcal{Z}:=C([0,T];H_0^2(0,1))\cap L^2(0,T;H^4(0,1))\cap L^\infty(0,T;W^{1,\infty}(0,1)).\]
	Hence, let $\overline{y}_0\in H_0^2(0,1)$ and let $\overline{y}\in \mathcal{Z}$ be a solution of the KS equation 
	\begin{equation}\label{apendix.target.y}
	\begin{cases}
		\overline{y}_{t}+\overline{y}_{xxxx}+\overline{y}_{xx}+\overline{y}\overline{y}_x
		=0 & \text{in }Q,\\
		\overline{y}(0,t)=\overline{y}(1,t)=\overline{y}_{x}(0,t)=\overline{y}_{x}(1,t)=0  & \text{on }(0,T),\\
		\overline{y}(\cdot,0)=\overline{y}_0& \text{in }(0,1).
	\end{cases}
	\end{equation}
	First, we consider the following linearized system:
	\begin{equation}\label{apendix.linearized.y}
	\begin{cases}
		y_{t}+y_{xxxx}+y_{xx}+\overline{y}y_x+\overline{y}_{x}y
		=f & \text{in }Q,\\
		y(0,t)=y(1,t)=y_{x}(0,t)=y_{x}(1,t)=0  & \text{on }(0,T),\\
		y(\cdot,0)=y_0& \text{in }(0,1).
	\end{cases}
	\end{equation}
	
	Now, from \cite[Section 2]{2011cerpamercado} we have the following definition.
	\begin{definition}\label{def.weakweaksol}
	Let $y_0\in H^{-2}_0(0,1)$ and $f\in L^1(0,T;W^{-1,1}(0,1))$. A solution of the system \eqref{apendix.linearized.y}
	is a solution $y\in L^2(Q)$ such that for any $g\in L^2(Q)$,
	\begin{equation}
		\iint\limits_{Q}y(x,t)g(x,t)dxdt=\langle y_0,w(0,\cdot)\rangle_{H^{-2}(0,1),H^2(0,1)}
		+\langle f,w\rangle_{L^1(0,T;W^{-1,1}(0,1)), L^\infty(0,T;W^{1,\infty}(0,1))},	
	\end{equation}
	where $w=w(x,t)\in \mathcal{Z}$ is the solution to
	\begin{equation}\label{apendix.linearized.w}
	\begin{cases}
		-w_{t}+w_{xxxx}+w_{xx}-\overline{y}w_x=g & \text{in }Q,\\
		w(0,t)=w(1,t)=w_{x}(0,t)=w_{x}(1,t)=0  & \text{on }(0,T),\\
		w(\cdot,T)=0& \text{in }(0,1).
	\end{cases}
	\end{equation}
	\end{definition}
	\begin{lemma}\label{apendix.lemma.weak}
		Assume $\overline{y}\in\mathcal{Z}$. Then, for any $y_0\in H^{-2}_0(0,1)$ and $f\in L^1(0,T;W^{-1,1}(0,1))$,
		the linearized system \eqref{apendix.linearized.y} admits a unique solution 
		$y\in C([0,T];H^{-2}(0,1))\cap L^2(0,T;L^2(0,1))$.
	\end{lemma}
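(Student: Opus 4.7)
The plan is to construct the unique solution by the transposition method built into Definition \ref{def.weakweaksol}. For each $g\in L^2(Q)$, let $w=w(g)\in\mathcal{Z}$ be the solution of the backward problem \eqref{apendix.linearized.w}. The first step would be to verify that this adjoint problem is well posed with the stated regularity when $g\in L^2(Q)$: since $\overline y\in\mathcal Z$ guarantees $L^\infty$ bounds on $\overline y$ and its spatial derivative, the standard parabolic theory for the fourth--order operator $-\partial_t+\partial_x^4+\partial_x^2$ (applied backward in time after the change $t\mapsto T-t$) combined with a Galerkin argument yields a map
\[
S\colon L^2(Q)\longrightarrow \mathcal Z,\qquad g\longmapsto w,
\]
together with the continuity estimate $\|w\|_{\mathcal Z}\le C\|g\|_{L^2(Q)}$, where $C$ depends on $T$ and $\|\overline y\|_{\mathcal Z}$. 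In particular this yields both $\|w(\cdot,0)\|_{H^2_0(0,1)}\le C\|g\|_{L^2(Q)}$ and $\|w\|_{L^\infty(0,T;W^{1,\infty}(0,1))}\le C\|g\|_{L^2(Q)}$.

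Next, I would introduce the linear functional
\[
\Lambda(g):=\langle y_0,w(0,\cdot)\rangle_{H^{-2},H^2_0}+\langle f,w\rangle_{L^1(W^{-1,1}),L^\infty(W^{1,\infty})}.
\]
Using the two duality bounds above, $|\Lambda(g)|\le C\bigl(\|y_0\|_{H^{-2}}+\|f\|_{L^1(0,T;W^{-1,1})}\bigr)\|g\|_{L^2(Q)}$, so $\Lambda$ is continuous on $L^2(Q)$. By the Riesz representation theorem there exists a unique $y\in L^2(Q)$ with $\iint_Q yg=\Lambda(g)$ for every $g$, which is precisely the definition of a solution to \eqref{apendix.linearized.y}. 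Uniqueness at the $L^2$ level is immediate from the defining identity, since testing $y_1-y_2$ against arbitrary $g\in L^2(Q)$ forces $y_1=y_2$ a.e.

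To promote the solution to $C([0,T];H^{-2}(0,1))$, I would use a density and approximation argument. Pick sequences $y_0^n\in H_0^2(0,1)$ and $f^n\in L^2(Q)$ converging to $y_0$ in $H^{-2}$ and to $f$ in $L^1(0,T;W^{-1,1})$ respectively. Lemma \ref{apendix.lema.strong.linear} (or the classical strong theory for \eqref{apendix.linearized.y}) produces regular solutions $y^n$, and pairing the equation for $y^n$ against test functions in $H^2_0(0,1)$ gives, for every $\phi\in H^2_0(0,1)$,
\[
\frac{d}{dt}\langle y^n(t),\phi\rangle=-\langle y^n(t),\phi_{xxxx}+\phi_{xx}\rangle+\langle \overline y y^n_x+\overline y_x y^n,\phi\rangle+\langle f^n(t),\phi\rangle,
\]
which, combined with the $L^2(Q)$ bound coming from the transposition estimate, shows that $\partial_t y^n$ is bounded in $L^1(0,T;H^{-2}(0,1))+L^1(0,T;W^{-1,1}(0,1))$. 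Hence $y^n\in C([0,T];H^{-2}(0,1))$ uniformly and a Cauchy argument produces a limit $y\in C([0,T];H^{-2}(0,1))$. By uniqueness this limit coincides with the transposition solution, completing the proof.

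The main obstacle will be controlling the time regularity in the weak space $H^{-2}$: the nonlinearity has been linearised but $\overline y y_x$ still involves a derivative of $y$, so one has to be careful when integrating by parts against $H^2_0$ test functions and when passing to the limit in $y^n\overline y_x+\overline y y^n_x$. The hypothesis $\overline y\in L^\infty(0,T;W^{1,\infty}(0,1))$ is exactly what makes these terms bounded in $L^2(Q)$ and allows the duality bracket to close; conversely, the regularity $w\in L^\infty(0,T;W^{1,\infty}(0,1))$ for $w=S(g)$ is the nontrivial ingredient that justifies pairing against $f\in L^1(0,T;W^{-1,1}(0,1))$ in the definition.
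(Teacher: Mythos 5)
For the record, the paper does not actually prove Lemma \ref{apendix.lemma.weak}: the remark immediately following it defers entirely to \cite[Proposition 2.1]{2011cerpamercado} and \cite{2001TemamChangbing2}. Your proposal reconstructs exactly the transposition argument that those references carry out, and the core of it is sound: Lemma \ref{apendix.lema.strong.linear}\,b) (after the reversal $t\mapsto T-t$, with terminal datum $0\in H_0^2$) together with the one--dimensional embedding $H^2(0,1)\hookrightarrow W^{1,\infty}(0,1)$ gives $w=S(g)\in\mathcal Z$ with $\|w\|_{\mathcal Z}\le C\|g\|_{L^2(Q)}$, the functional $\Lambda$ is then bounded on $L^2(Q)$, Riesz representation produces the unique $y\in L^2(Q)$ satisfying the identity of Definition \ref{def.weakweaksol}, and uniqueness is indeed immediate.

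The one step that does not work as written is the upgrade to $C([0,T];H^{-2}(0,1))$. With only a uniform $L^2(Q)$ bound on the approximations $y^n$, testing the equation against $\phi\in H_0^2(0,1)$ does \emph{not} bound $\partial_t y^n$ in $L^1(0,T;H^{-2}(0,1))+L^1(0,T;W^{-1,1}(0,1))$: the term $\langle y^n,\phi_{xxxx}\rangle$ is only controlled by $\|y^n\|_{L^2}\|\phi\|_{H^4}$, so what you actually get is a bound on $\partial_t y^n$ in a space like $L^2(0,T;(H^4\cap H_0^2)')$. That gives equicontinuity with values in a space strictly weaker than $H^{-2}$, which by itself yields neither a uniform $C([0,T];H^{-2})$ bound nor the Cauchy property in that norm. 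The standard repair is to obtain the estimate $\sup_{t\in[0,T]}\|y(t)\|_{H^{-2}}\le C(\|y_0\|_{H^{-2}}+\|f\|_{L^1(0,T;W^{-1,1})})$ directly by duality at each fixed time: for $\phi\in H_0^2(0,1)$ solve the adjoint problem \eqref{apendix.linearized.w} on $(0,t)$ with terminal datum $\phi$ and zero source, use $\|w_\phi\|_{\mathcal Z}\le C\|\phi\|_{H_0^2}$ uniformly in $t$, and represent $\langle y(t),\phi\rangle$ by the transposition identity on $(0,t)$. Applying this estimate to the differences $y^n-y^m$ (which, by linearity, are transposition solutions with data $y_0^n-y_0^m$ and $f^n-f^m$) gives the Cauchy property in $C([0,T];H^{-2}(0,1))$ and closes the argument. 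With that modification your proof is complete and coincides with the route the paper's cited references take.
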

	
	\begin{remark}
		Note that both the regularity for the solution $w$ of \eqref{apendix.linearized.w} and an exhaustive
		proof of Lemma \ref{apendix.lemma.weak} can be obtained in an easy way
		from \cite[Proposition 2.1]{2011cerpamercado} and \cite{2001TemamChangbing2}. Due to that, we have omitted 
		those details here. 
	\end{remark}
	
	The following lemma shows regularity results for \eqref{apendix.linearized.y} by considering data 
	$(f,y_0)$ belong to more regular spaces like $L^2(Q)\times L^2(0,1)$ and 
	$L^2(Q)\times H^2_0(0,1)$. We invite to the reader to review \cite{2001TemamChangbing2} and 
	\cite[Appendix A]{2018carreno-santos-stackelberg} for more details.
	\begin{lemma}\label{apendix.lema.strong.linear}
		Assume $\overline{y}\in\mathcal{Z}$. 
	\begin{enumerate}
	\item [a)] For any $y_0\in L^{2}(0,1)$	 and $f\in L^2(Q)$,
		the linearized system \eqref{apendix.linearized.y} admits a unique solution 
		$y\in C([0,T];L^2(0,1))\cap L^2(0,T;H^2(0,1))$ with $y_t\in L^2(0,T;H^{-2}(0,1))$. 
		Moreover, there exists a positive constant 
		$C=C(\|\overline{y}\|_{L^\infty(0,T;W^{1,\infty}(0,1)).})$ such that	
		\begin{equation}
		\|y\|_{C([0,T];L^2(0,1))\cap L^2(0,T;H^2(0,1))}\leq 
		C\Bigl(\|f\|_{L^2(Q)}+\|y_0\|_{L^{2}_0(0,1)}\Bigr).	
		\end{equation}

	\item [b)] 	For  $(y_0,f)\in H_0^{2}(0,1)\times L^2(Q)$,
		the linearized system \eqref{apendix.linearized.y} admits a unique solution 
		$y$ in $C([0,T];H_0^2(0,1))\cap L^2(0,T;H^4(0,1))$. Moreover, 
		\begin{equation}\label{reg.high.linear}
		\|y\|_{C([0,T];H_0^2(0,1))\cap L^2(0,T;H^4(0,1))}\leq 
		C\Bigl(\|f\|_{L^2(Q)}+\|y_0\|_{H^{2}_0(0,1)}\Bigr),	
		\end{equation}
		where $C$ is a positive constant depending on $\|\overline{y}\|_{L^\infty(0,T;W^{1,\infty}(0,1)).}$	
	\end{enumerate}
	\end{lemma}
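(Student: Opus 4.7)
The plan is to prove both parts by the standard Faedo--Galerkin method combined with energy estimates whose constants track $\|\overline{y}\|_{L^\infty(0,T;W^{1,\infty}(0,1))}$. One could alternatively invoke analytic semigroup theory, since $-(\partial_x^4+\partial_x^2)$ with clamped boundary conditions generates an analytic semigroup on $L^2(0,1)$ with domain $H^4(0,1)\cap H_0^2(0,1)$ and the advection terms form a relatively bounded perturbation; I would work at the level of explicit a priori estimates to keep the constant transparent.

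For part (a), take Galerkin approximations $y_m$ in the span of the first $m$ eigenfunctions of the clamped biharmonic $Au=u_{xxxx}$ with $D(A)=H^4(0,1)\cap H_0^2(0,1)$. Testing the approximate equation against $y_m$ and integrating by parts, using $y_m=y_{m,x}=0$ at the endpoints, yields
\[
\tfrac{1}{2}\tfrac{d}{dt}\|y_m\|_{L^2}^{2} + \|y_{m,xx}\|_{L^2}^{2} = \|y_{m,x}\|_{L^2}^{2} - \tfrac{1}{2}\int_0^1 \overline{y}_x\,y_m^{2}\,dx + (f,y_m)_{L^2}.
\]
The destabilizing contribution $\|y_{m,x}\|_{L^2}^{2}$ is absorbed via the interpolation $\|y_{m,x}\|_{L^2}^{2}\leq \|y_m\|_{L^2}\|y_{m,xx}\|_{L^2}$ (obtained by integration by parts and Cauchy--Schwarz) together with Young's inequality; the transport term is bounded by $\tfrac{1}{2}\|\overline{y}_x\|_{L^\infty}\|y_m\|_{L^2}^{2}$; the source term is handled by Cauchy--Schwarz. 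Gronwall's lemma then delivers a uniform-in-$m$ a priori bound in $C([0,T];L^2)\cap L^2(0,T;H^2)$ depending on the data only through $\|y_0\|_{L^2}+\|f\|_{L^2(Q)}$, with the constant depending on $\|\overline{y}\|_{L^\infty(0,T;W^{1,\infty})}$. Weak compactness together with the linearity of the system yield existence of a solution, uniqueness and continuous dependence follow from the same estimate applied to the difference of two candidates, and the regularity $y_t\in L^2(0,T;H^{-2})$ is read directly off the equation.

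For part (b), repeat the energy procedure one derivative higher by testing the approximate equation against $y_{m,xxxx}$ (equivalently, against $Ay_m$). The highest-order integration by parts is clean because each Galerkin basis function lies in $D(A)$, so the traces of $y_{m,xx}$ and $y_{m,xxx}$ that would otherwise obstruct the calculation are realized by the finite-dimensional subspace in a compatible way. The resulting identity has $\|y_{m,xxxx}\|_{L^2}^{2}$ as leading dissipative term and $\tfrac{d}{dt}\|y_{m,xx}\|_{L^2}^{2}$ on the time side, while the cross terms involving $\overline{y}$ and $\overline{y}_x$ are bounded using $\overline{y}\in L^\infty(0,T;W^{1,\infty})$ and Young's inequality, absorbing a small multiple of $\|y_{m,xxxx}\|_{L^2}^{2}$. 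A second application of Gronwall then yields the a priori bound in $C([0,T];H_0^2)\cap L^2(0,T;H^4)$, and passing to the limit completes the argument.

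The main obstacle is the careful bookkeeping of boundary contributions in the part (b) estimate, where the higher-order traces of $y$ are not a priori prescribed by the original homogeneous boundary conditions; the clamped-biharmonic Galerkin basis sidesteps this at the approximate level, and passage to the limit is then straightforward. Beyond this, everything is standard parabolic machinery, made slightly more delicate only by the anti-diffusive term $y_{xx}$, which is why the interpolation inequalities play a central role in absorbing it by the biharmonic dissipation.
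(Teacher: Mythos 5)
Your argument is correct, and it is worth noting that the paper itself does not prove this lemma at all: it simply defers to \cite{2001TemamChangbing2} and \cite[Appendix A]{2018carreno-santos-stackelberg}, so your Galerkin/energy proof supplies details the paper leaves to the literature (and it is the same standard machinery those references use). The key computations check out: testing against $y_m$ gives exactly the identity you state, since $(\overline{y}y_{m,x}+\overline{y}_xy_m,y_m)=\tfrac{1}{2}\int_0^1\overline{y}_x y_m^2\,dx$ after one integration by parts, and the interpolation $\|y_{m,x}\|_{L^2}^2\le\|y_m\|_{L^2}\|y_{m,xx}\|_{L^2}$ (valid under the clamped conditions) is the right way to absorb the anti-diffusive term; for part (b), taking the basis in $D(A)=H^4(0,1)\cap H_0^2(0,1)$ makes $(y_{m,t},Ay_m)=\tfrac{1}{2}\tfrac{d}{dt}\|y_{m,xx}\|_{L^2}^2$ legitimate with no stray traces, exactly as you say. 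Two small points you leave implicit but should state: the Gronwall constant depends on $T$ as well as on $\|\overline{y}\|_{L^\infty(0,T;W^{1,\infty}(0,1))}$ (consistent with the statement, since $T$ is fixed), and the continuity in time ($C([0,T];L^2)$ in part (a), $C([0,T];H_0^2)$ in part (b)) requires the standard Lions--Magenes interpolation lemma from $y\in L^2(0,T;H^2)$, $y_t\in L^2(0,T;H^{-2})$ (respectively $y\in L^2(0,T;H^4)$, $y_t\in L^2(0,T;L^2)$), which you should cite explicitly rather than just asserting.
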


	Now, we mention a result for coupled fourth--order system. Its proof can be found in 
	\cite[Appendix A]{2018carreno-santos-stackelberg}. Let us consider the system:
	\begin{equation}\label{apendix.coupled.linearized}
	\begin{cases}
		y_{t}+y_{xxxx}+y_{xx}+\overline{y}y_x+\overline{y}_xy= g_1+
		 -\mu^{-2}z & \text{in }Q,\\
		-z_{t}+z_{xxxx}+z_{xx}-(y+\overline{y})z_{x}=g_2& \text{in }Q,\\
		y(0,t)=y(1,t)=z(0,t)=z(1,t)=0 & \text{on }(0,T),\\
		y_{x}(0,t)=y_{x}(1,t)=z_{x}(0,t)=z_{x}(1,t)=0 & \text{on }(0,T),\\
		y(\cdot,0)=y_{0}(\cdot),\,\, z(\cdot,T)=0& \text{in }(0,1).
	\end{cases}
	\end{equation}
	\begin{lemma}\label{lema.Nico.Mauro}	
		Assume that $\overline{y}\in L^\infty(Q)$. Then, there exists $\mu_0>0$ such that for every $\mu\geq \mu_0$,
		any $g_1,g_2\in L^2(Q)$ and any $y_0\in L^2(0,1)$, $(y,z)$ is the unique solution of 
		\eqref{apendix.coupled.linearized} in the space
		 \[(y,z)\in (L^\infty(0,T;L^{2}(0,1))\cap L^2(0,T;H^2(0,1)))^2.\]
	\end{lemma}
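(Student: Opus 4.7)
My plan is to prove existence, uniqueness and regularity by a Banach fixed point argument that exploits the smallness of the coupling factor $\mu^{-2}$. Set $W := L^\infty(0,T;L^2(0,1))\cap L^2(0,T;H^2(0,1))$ endowed with its natural norm, and define a map $\Lambda:W\to W$ as follows. Given $\widetilde y\in W$, first solve the backward fourth-order equation
$$-z_t+z_{xxxx}+z_{xx}-(\widetilde y+\overline y)z_x=g_2,\qquad z(\cdot,T)=0,$$
together with the homogeneous clamped boundary conditions, and then solve the forward problem
$$y_t+y_{xxxx}+y_{xx}+\overline y\, y_x+\overline y_x\, y=g_1-\mu^{-2}z,\qquad y(\cdot,0)=y_0,$$
and set $\Lambda(\widetilde y):=y$. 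Both equations are linear, so the essential question is well-posedness of each with the correct estimates.

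For the well-posedness of the two decoupled problems, observe that in one space dimension $H^2(0,1)\hookrightarrow L^\infty(0,1)$, so that any function in $W$ lies in $L^2(0,T;L^\infty(0,1))$. Thus the transport coefficient $\widetilde y+\overline y$ in the $z$-equation is an admissible perturbation, and a standard energy estimate (multiplying by $z$, integrating by parts in $x$, using $\|(\widetilde y+\overline y)z_x z\|\leq \varepsilon\|z_{xx}\|^2+C_\varepsilon\|\widetilde y+\overline y\|_{L^2L^\infty}^2\|z\|^2$ and Grönwall) gives a unique $z\in W$ with
$$\|z\|_W\leq C_1\bigl(\|\widetilde y\|_W,\|\overline y\|_{L^\infty(Q)}\bigr)\,\|g_2\|_{L^2(Q)}.$$
For the $y$-equation the coefficients depend only on $\overline y\in L^\infty(Q)$, and the analogous energy estimate (treating the term $\overline y_x y$ in its divergence form $\partial_x(\overline y y)-\overline y y_x$ to avoid differentiating $\overline y$) gives a unique $y\in W$ with
$$\|y\|_W\leq C_2\bigl(\|\overline y\|_{L^\infty(Q)}\bigr)\bigl(\|g_1\|_{L^2(Q)}+\mu^{-2}\|z\|_{L^2(Q)}+\|y_0\|_{L^2(0,1)}\bigr).$$
Hence $\Lambda$ is well-defined.

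To show $\Lambda$ is a strict contraction for $\mu$ large enough, take $\widetilde y_1,\widetilde y_2\in W$ and set $\delta\widetilde y:=\widetilde y_1-\widetilde y_2$, $\delta z:=z_1-z_2$, $\delta y:=y_1-y_2$. Then $\delta z$ satisfies
$$-(\delta z)_t+(\delta z)_{xxxx}+(\delta z)_{xx}-(\widetilde y_1+\overline y)(\delta z)_x=(\delta\widetilde y)\,\partial_x z_2,\qquad \delta z(\cdot,T)=0,$$
and $\delta y$ satisfies the forward linear equation with source $-\mu^{-2}\delta z$ and zero initial datum. Estimating the right-hand side of the $\delta z$-equation by $\|(\delta\widetilde y)\partial_x z_2\|_{L^2(Q)}\leq \|\delta\widetilde y\|_{L^\infty L^2}\|\partial_x z_2\|_{L^2L^\infty}\leq C\|\delta\widetilde y\|_W\|z_2\|_W$ and using the a priori bound on $\|z_2\|_W$ yields $\|\delta z\|_W\leq K\|\delta\widetilde y\|_W$ with $K$ depending only on $\|\overline y\|_{L^\infty(Q)}$ and $\|g_2\|_{L^2(Q)}$, but \emph{not} on $\mu$. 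Plugging into the linear estimate for $\delta y$ gives $\|\delta y\|_W\leq C_2\mu^{-2}\|\delta z\|_{L^2(Q)}\leq C_2K\mu^{-2}\|\delta\widetilde y\|_W$. Choosing $\mu_0$ so that $C_2K\mu_0^{-2}<1$, Banach's theorem furnishes a unique fixed point, which together with the associated $z$ is the desired solution of \eqref{apendix.coupled.linearized}; uniqueness in the class $W\times W$ follows from the same contraction argument applied to the difference of two hypothetical solutions.

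The main obstacle is that the coefficient $\widetilde y+\overline y$ in the $z$-equation lies only in $L^2(0,T;L^\infty(0,1))$, which falls short of the $\mathcal Z$-regularity required in Lemma \ref{apendix.lema.strong.linear}. Handling this requires the direct energy estimate sketched above, together with care to express $\overline y_x y$ through $(\overline y y)_x$ so that no derivatives of $\overline y\in L^\infty(Q)$ appear. Once these estimates are in place with constants independent of $\mu$, the smallness of $\mu^{-2}$ turns the coupling into a contractive perturbation and the conclusion follows.
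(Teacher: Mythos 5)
The paper does not actually prove this lemma: it is quoted from \cite[Appendix A]{2018carreno-santos-stackelberg}, so there is no in-paper argument to compare against. Your strategy --- decouple the forward and backward equations, solve each by energy estimates, and close with a Banach fixed point using the smallness of the factor $\mu^{-2}$ --- is the natural route and is consistent in spirit with the energy-method treatment in that reference. Your handling of the low-regularity transport coefficient ($\widetilde y+\overline y\in L^2(0,T;L^\infty(0,1))$ via $H^2\hookrightarrow L^\infty$ in one dimension) and of $\overline y_x y$ through the divergence form $(\overline y y)_x$ is correct, modulo the cosmetic point that the Young inequality should be stated pointwise in $t$ with $\|\widetilde y+\overline y\|_{L^\infty_x}^2$ and the $L^2(0,T;L^\infty)$ integrability invoked only at the Gr\"onwall step.

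There is, however, a genuine gap in the contraction step. The constant $K$ in $\|\delta z\|_W\le K\|\delta\widetilde y\|_W$ does \emph{not} depend only on $\|\overline y\|_{L^\infty(Q)}$ and $\|g_2\|_{L^2(Q)}$: the Gr\"onwall factor in the energy estimate for the $\delta z$-equation involves $\|\widetilde y_1\|_W$ (through the coefficient $\widetilde y_1+\overline y$), and the bound on $\|z_2\|_W$ involves $\|\widetilde y_2\|_W$. The coupling term $yz_x$ is bilinear in the unknowns, so $\Lambda$ is Lipschitz only on bounded sets. To close the argument you must exhibit a ball $B_R\subset W$ with $\Lambda(B_R)\subseteq B_R$ (which your first estimates do give for $\mu$ large) and then take $\mu$ large enough that $C_2K(R)\mu^{-2}<1$; since $R$ is controlled by $\|g_1\|_{L^2(Q)}+\|y_0\|_{L^2(0,1)}$ and $K(R)$ also involves $\|g_2\|_{L^2(Q)}$, the resulting threshold $\mu_0$ depends on the data. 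This is strictly weaker than the statement, which asserts a single $\mu_0$ valid for \emph{all} $g_1,g_2\in L^2(Q)$ and $y_0\in L^2(0,1)$; the same defect affects your uniqueness claim, since the contraction applied to two solutions requires $\mu$ large relative to their norms. You should either add the ball-invariance step and state explicitly that your $\mu_0$ depends on the data (which suffices for the way the lemma is used in Proposition \ref{prop.null.control}), or supply a different argument (e.g.\ a global a priori estimate exploiting the structure of the term $\int(y+\overline y)z_xz\,dx$) if the uniform-in-data claim is to be retained.
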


	The next step in this appendix corresponds to the nonlinear problem

	\begin{equation}\label{apendix.nonlinearized.y}
	\begin{cases}
		y_{t}+y_{xxxx}+y_{xx}+\overline{y}y_x+\overline{y}_{x}y+yy_x
		=f & \text{in }Q,\\
		y(0,t)=y(1,t)=y_{x}(0,t)=y_{x}(1,t)=0  & \text{on }(0,T),\\
		y(\cdot,0)=y_0& \text{in }(0,1).
	\end{cases}
	\end{equation}	
	
	\begin{lemma}\label{lema.nonlinearregularity}\smallskip\
	\begin{enumerate}
	\item [a)] Assume $\overline{y}\in L^\infty(0,T;W^{1,\infty}(0,1))$. There exists $\delta>0$ such that for any $(f,y_0)\in L^2(Q)\times L^2(0,1)$ satisfying
		\[\|y_0\|_{L^2(0,1)}+\|f\|_{L^2(Q)}\leq \delta \]
		problem \eqref{apendix.nonlinearized.y} has a unique solution in $C([0,T];L^2(0,1))\cap L^2(0,T;H^2(0,1))$.
	\item [b)]	Let $\overline{y}=0$ in \eqref{apendix.nonlinearized.y}. There exists $\delta>0$ such that for any 
		$(f,y_0)\in L^2(Q)\times H_0^2(0,1)$ satisfying
		\[\|y_0\|_{H_0^2(0,1)}+\|f\|_{L^2(Q)}\leq \delta \]
		problem \eqref{apendix.nonlinearized.y} has a unique solution in $C([0,T];H_0^2(0,1))\cap L^2(0,T;H^4(0,1))$.
	\end{enumerate}
	\end{lemma}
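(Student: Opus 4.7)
My plan is a standard Banach fixed-point argument for the quadratic nonlinearity $yy_x$, viewed as a perturbation of the linear KS operator whose regularity theory is already collected in Lemma \ref{apendix.lema.strong.linear}. I will handle (a) and (b) separately since they live in different functional scales, but the structure is identical.

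For part (a), set $W_1 := C([0,T];L^2(0,1))\cap L^2(0,T;H^2(0,1))$ and, given $\tilde{y}\in W_1$, define $\Lambda_1(\tilde{y}) := y$ where $y$ solves the linear problem \eqref{apendix.linearized.y} with source $f - \tilde{y}\tilde{y}_x$ and initial datum $y_0$. The decisive observation is that in one space dimension $H^1(0,1)\hookrightarrow L^\infty(0,1)$, hence
\[
\|\tilde{y}\tilde{y}_x\|_{L^2(Q)} \le \|\tilde{y}\|_{L^\infty(0,T;L^2)}\,\|\tilde{y}_x\|_{L^2(0,T;L^\infty)} \le C\|\tilde{y}\|_{W_1}^2,
\]
so the forcing lives in $L^2(Q)$ and Lemma \ref{apendix.lema.strong.linear}(a) applies. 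I then get $\|\Lambda_1(\tilde{y})\|_{W_1}\le C_0(\|f\|_{L^2(Q)} + \|y_0\|_{L^2} + C\|\tilde{y}\|_{W_1}^2)$. Choosing the radius $\rho$ of a ball $B_\rho\subset W_1$ small enough that $C_0 C\rho < \tfrac{1}{2}$, and then $\delta$ small enough that $C_0\delta \le \tfrac{\rho}{2}$, forces $\Lambda_1(B_\rho)\subset B_\rho$. For the contraction, the difference $\Lambda_1(\tilde{y}^1)-\Lambda_1(\tilde{y}^2)$ solves the linearized equation with zero initial data and source
\[
\tilde{y}^2\tilde{y}^2_x - \tilde{y}^1\tilde{y}^1_x = (\tilde{y}^2-\tilde{y}^1)\tilde{y}^1_x + \tilde{y}^2(\tilde{y}^2_x-\tilde{y}^1_x),
\]
which by the same embedding is bounded in $L^2(Q)$ by $C\rho\,\|\tilde{y}^1-\tilde{y}^2\|_{W_1}$, giving a strict contraction after shrinking $\rho$ once more. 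Banach's theorem yields the unique fixed point, which is the solution of \eqref{apendix.nonlinearized.y}.

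For part (b), the setting is $W_2 := C([0,T];H_0^2(0,1))\cap L^2(0,T;H^4(0,1))$ with $\overline{y}\equiv 0$, and the map $\Lambda_2(\tilde{y}) := y$ solves the pure biharmonic linearization with source $f-\tilde{y}\tilde{y}_x$. Here the nonlinear estimate is even more comfortable: since $H^2(0,1)\hookrightarrow W^{1,\infty}(0,1)$ in one dimension, both $\tilde{y}$ and $\tilde{y}_x$ belong to $L^\infty(Q)$ with $\|\tilde{y}\|_{L^\infty(Q)}+\|\tilde{y}_x\|_{L^\infty(Q)} \le C\|\tilde{y}\|_{W_2}$, and consequently $\|\tilde{y}\tilde{y}_x\|_{L^2(Q)}\le C\|\tilde{y}\|_{W_2}^2$. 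Applying Lemma \ref{apendix.lema.strong.linear}(b) and repeating the self-mapping/contraction argument with the higher-regularity norm closes the case.

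The only delicate step, which I single out as the main technical obstacle, is verifying that the quadratic term is genuinely compatible with the functional setting one starts from, i.e., that the nonlinear forcing $\tilde{y}\tilde{y}_x$ lies in $L^2(Q)$ (the source space used in Lemma \ref{apendix.lema.strong.linear}) whenever $\tilde{y}$ lies in the target space. Everything else (existence, uniqueness, continuous dependence) is then a mechanical consequence of the linear estimates plus the smallness of the initial data and forcing. Uniqueness in the full space, rather than only inside $B_\rho$, follows by a standard Gronwall energy estimate on the difference of two solutions in $W_i$, using once more the embedding $H^1\hookrightarrow L^\infty$ and the smallness radius.
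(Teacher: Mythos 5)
Your proposal is correct. Note that the paper does not actually supply a proof of this lemma: the remark that follows it defers part (a) to Theorem A.4 of \cite{2018carreno-santos-stackelberg} (adapted from $L^1(0,T;L^2(0,1))$ to $L^2(Q)$ sources) and part (b) to \cite{2001TemamChangbing2}. Your Banach fixed-point argument, resting on the bilinear estimate $\|\tilde y\,\tilde y_x\|_{L^2(Q)}\le C\|\tilde y\|_{W_i}^2$ (valid via $H^1(0,1)\hookrightarrow L^\infty(0,1)$ for part (a) and $H^2(0,1)\hookrightarrow W^{1,\infty}(0,1)$ for part (b)) combined with the a priori bounds of Lemma \ref{apendix.lema.strong.linear}, is precisely the standard route those references take, so it legitimately fills the gap. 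One minor point: in part (a) you apply Lemma \ref{apendix.lema.strong.linear} with only $\overline{y}\in L^\infty(0,T;W^{1,\infty}(0,1))$ rather than $\overline{y}\in\mathcal{Z}$ as that lemma formally assumes; this is harmless since its constant is stated to depend only on $\|\overline{y}\|_{L^\infty(0,T;W^{1,\infty}(0,1))}$, but it is worth saying explicitly.
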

	
	\begin{remark}
		Although in \cite[Theorem A.4]{2018carreno-santos-stackelberg} the authors have proved the first part of the
		above result by considering $f\in L^1(0,T;L^2(0,1))$ instead of $f\in L^2(0,T;L^2(0,1))$, their arguments 
		can be easily adapted  for proving this part of lemma \ref{lema.nonlinearregularity}. The second part can be 
		obtained from \cite{2001TemamChangbing2}. For this reason, we have omitted the
		proof of Lemma \ref{lema.nonlinearregularity}. 
	\end{remark}
	\begin{remark}\label{obs.final}
		Observe that, from lemma \ref{lema.nonlinearregularity}, part $b)$, and the fact that $H_0^2(0,1)$ embeds continuously into
		$W^{1,\infty}(0,1)$, it follows that $y\in L^\infty(0,T;W^{1,\infty}(0,1))$.	
	\end{remark}
						
%\bibliographystyle{plain}
%\bibliography{Biblio}

\end{document}